\documentclass[11pt]{article}
\pdfoutput=1
\usepackage[letterpaper]{geometry}

\usepackage[utf8]{inputenc}
\usepackage[english]{babel}
\usepackage{graphicx}
\usepackage{hyperref}
\usepackage{amsmath}
\usepackage{mathtools}
\usepackage{amsfonts}
\usepackage{amsthm}
\usepackage{parskip}
\usepackage{bbm}
\usepackage{todonotes}
\providecommand{\keywords}[1]{{\textit{Key words.}} #1}
\providecommand{\amsclass}[1]{{\textit{AMS subject classifications.}} #1}

\everymath{\displaystyle}

\makeatletter
\def\thm@space@setup{%
  \thm@preskip=\parskip \thm@postskip=0pt
}
\makeatother

%\title{A template for the \emph{arxiv} style}
\title{Dynamics of delayed neural field models in two-dimensional spatial domains}

\newcounter{savecntr}% Save footnote counter
\newcounter{restorecntr}% Restore footnote counter

\author{L. Spek\setcounter{savecntr}{\value{footnote}}\thanks{Department of Applied Mathematics, University of Twente, P.O. Box 217, 7500 AE, Enschede, The Netherlands (\href{mailto:l.spek@utwente.nl}{l.spek@utwente.nl, koen.dijkstra@gmx.net, s.a.vangils@utwente.nl}).}, \,    
  M.~Polner\thanks{Corresponding author. Bolyai Institute, University of Szeged, H-6720 Szeged, Aradi v\'ertan\'uk tere 1, Hungary (\href{mailto:polner@math.u-szeged.hu}{polner@math.u-szeged.hu}).}, \, 
  K. Dijkstra\setcounter{restorecntr}{\value{footnote}}%
  \setcounter{footnote}{\value{savecntr}}\footnotemark% Print footnotemark
  \setcounter{footnote}{\value{restorecntr}},  \,
  S.~A.~van~Gils\setcounter{restorecntr}{\value{footnote}}%
  \setcounter{footnote}{\value{savecntr}}\footnotemark% Print footnotemark
  \setcounter{footnote}{\value{restorecntr}} 
  }

\newtheorem{thm}{Theorem}[section]
\newtheorem{cor}[thm]{Corollary}
\newtheorem{lem}[thm]{Lemma}
\newtheorem{prop}[thm]{Proposition}
\newtheorem{defin}[thm]{Definition}

\DeclareMathOperator\rank{rank}

\DeclareMathOperator{\imag}{Im}
\DeclareMathOperator{\real}{Re}
\DeclareMathOperator{\nr}{nr}

\newcommand{\R}{\mathbb R}
\newcommand{\ppf}[2]{\frac{\partial #1}{\partial #2}}

\date{}

\begin{document}

\maketitle

% REQUIRED
\begin{abstract}
  Delayed neural field models can be viewed as a dynamical system in an appropriate functional analytic setting. On two dimensional rectangular space domains, and for a special class of connectivity and delay functions, we describe the spectral properties of the linearized equation. We transform the characteristic integral equation for the delay differential equation (DDE) into a linear partial differential equation (PDE) with boundary conditions. We demonstrate that finding eigenvalues and eigenvectors of the DDE is equivalent with obtaining nontrivial solutions of this boundary value problem (BVP). When the connectivity kernel consists of a single exponential, we construct a basis of the solutions of this BVP that forms a complete set in $L^2$. This gives a complete characterization of the spectrum and is used to construct a solution to the resolvent problem. As an application we give an example of a Hopf bifurcation and compute the first Lyapunov coefficient.\\
  \\
  \keywords{Neural Field Equations, Delay Differential Equations, Sturm-Liouville Problems, Completeness of Set of Exponential Functions, Completeness of Eigenfunctions, Hopf bifurcation}\\
  \\
  \amsclass{34K06, 34K08, 92C20, 34B24, 42C15, 42C30, 46A35, 46B15}
\end{abstract}

\section{Introduction}
Neural field models are based on the seminal work of Wilson and Cowan \cite{wilson_excitatory_1972,wilson_mathematical_1973} on the dynamical properties of two populations of excitatory and inhibitory neurons. Instead of taking individual spiking neurons, a neural field model is obtained by spatial and temporal averaging of the membrane potential across a population of neurons over a time interval. The interactions between neurons across synapses are modelled as a convolution over a so-called connectivity kernel and with a nonlinear activation function. In the work of Amari \cite{amari_dynamics_1977}, this model is consolidated into a single integro-differential equation. Later Nunez \cite{nunez_brain_1974} expanded this work by including the transmission delays of the signals between neurons. These neural fields prove to be useful to understand various neural activity in the cortex and other parts of the brain \cite{coombes_stephen_delays_2009,coombes_large-scale_2010,coombes_2014,venkov_dynamic_2007}. 

Delayed neural field models take the form of an integro-differential equation with space dependent delays. By choosing the proper state space, they can be reformulated as an abstract delay differential equation \cite{vg,spek2019neural}, where many available functional analytic tools can be applied. When the neuronal populations are distributed over a one-dimensional domain and a special class of connectivity functions is considered, a quite complete description of the spectrum and resolvent problem of the linearized equation is known \cite{koen, vg}. Recently, this model has been extended by including a diffusion term into the neural field, which models direct, electrical connections, \cite{spek2019neural}. We analyze the evolution of a delayed neural field equation corresponding to a single population of neurons on a two-dimensional spatial domain. For a summary of some extensions of neuronal activity models from one to two dimensions cf. \cite{coombes_2014} and references therein. Numerical methods developed for the efficient and accurate time simulation of neural fields on higher dimensional domains can be found in \cite{faugeras,evelyn,PM}. Moreover, numerical studies of the non-essential spectrum of abstract delay differential equations are also available, \cite{VERMIGLIO_2016}. Analytic results in this framework, to the best of our knowledge, cannot be found in the literature. In \cite{visser_2017}, Visser et al. have characterized the spectrum for a neural field with transmission delays on a spherical domain and computed normal form coefficients of Hopf and double Hopf bifurcations. In this paper we give an analytic description of the spectrum of the linearized problem on a rectangle. Due to our choice of the connectivity kernel and the transmission delays, it is possible to transform the characteristic equation of the DDE into a linear PDE. 

The first step towards finding a solution is to determine the characteristic polynomial for the PDE. We define an equivalence class on the complex plane characterized by the roots of this polynomial. It is then possible to give a partition of the exponential solutions of the PDE corresponding to these equivalence classes. Moreover, when we consider finite linear combinations of exponential solutions of the PDE, we can derive further conditions, identified as boundary conditions. When the connectivity kernel consists of a single exponential, modeling a population with inhibitory neurons, we give a complete characterization of the solution of this boundary value problem (BVP), hence obtain the eigenfunctions corresponding to the eigenvalues of the DDE. In this special connectivity case, the solution of the BVP can be given using the separation of variables, which leads to two Sturm-Liouville problems on one dimensional domains. The vector space of separable solutions form a complete basis in the space of square integrable functions on the rectangle. Using this unique basis expansion, we give a complete characterization of the spectrum and resolvent and this is one of the main results of this paper.   

% The outline is not required, but we show an example here.

The paper is organized as follows. In Section~\ref{sec:Func_anal}, we summarize the functional analytic setting from \cite{vg,spek2019neural} that casts the integro-differential equation into an abstract DDE. It is shown in Section~\ref{section spectrum} how to obtain an explicit representation of some eigenvectors of the linearized problem for a particular choice of connectivity function, expressed as a finite linear combination of exponential functions. This type of connectivity models a mixed population of interacting excitatory and inhibitory neurons. In Section~\ref{sec:single_exp} we give a complete description of the spectrum and resolvent problem when the connectivity is a single exponential. Finally, we show  an example of a Hopf bifurcation in Section~\ref{s:Hopf_bifurcation}. 

The mathematical model for neural fields with space-dependent delays is as follows. 
Consider $p$ populations consisting of neurons distributed over a bounded, connected domain $\Omega\subset \mathbb{R}^d,$ $d=1,2,3.$ For each $i,$ the variable $V_i(t,r)$ denotes the membrane potential at time $t,$ averaged over those neurons in the $i$th population 
positioned at $r\in\Omega.$ These potentials are assumed to evolve according to the following system of integro-differential equations 
\begin{equation}\label{NF}
\frac{\partial V_i}{\partial t}(t,r)=-\alpha_i V_i(t,r)+\sum_{j=1}^{p}\int_\Omega J_{ij}(r,r',t)S_j(V_j(t-\tau_{ij}(r,r'),r'))d\, r',
\end{equation}
for $i=1,\dots, p.$ The intrinsic dynamics exhibits exponential decay to the baseline level $0,$ as $\alpha_i>0.$ The propagation delays $\tau_{ij}(r,r')$ measure the time it takes for a signal sent by a 
type-$j$ neuron located at position $r'$ to reach a type-$i$ neuron located at position $r.$ 
The function $J_{ij}(r,r',t)$ represents the connection strength between population $j$ at location $r'$ and population $i$ at location $r$ at time $t.$ 
The firing rate functions are $S_j.$ For the definition and interpretation of these functions we refer to \cite{veltz}. 

%%%%%%%%%%%%%%%%%%%%%%%%%
\section{Functional analytic setting}
\label{sec:Func_anal}

In this paper we analyze the evolution of a single population of neurons, $p=1,$ in a bounded two-dimensional domain $\Omega\subset \R^2,$ 
\begin{equation}\label{NF-1pop_model}
  \frac{\partial V}{\partial t}(t,r)=-\alpha V(t,r)+\int_\Omega J(r,r')S(V(t-\tau(r,r'),r'))d\, r',\quad \alpha>0.
\end{equation}
Note that we will only deal with autonomous systems, hence, the connectivity does not depend on time. We assume that the following hypotheses are satisfied for the functions involved in the system, (as in~\cite{vg}): the connectivity kernel $J\in C(\bar\Omega\times\bar\Omega),$ 
the firing rate function $S\in C^\infty (\R)$ and its $k$th Fr\'echet derivative is bounded for every $k\in \mathbb{N}_0$ and the delay function $\tau\in  C(\bar\Omega\times\bar\Omega)$ is non-negative.

From the assumption on the delay function $\tau,$ we may set
\[
0< \tau_{max}=\sup_{(r,r')\in\bar\Omega\times\bar\Omega} \tau(r,r')<\infty.
\]
We define the Banach spaces $Y:=C(\bar\Omega, \mathbb{R})$ and  $X:=C\left([-\tau_{max},0];Y\right).$ For $\varphi\in X,$ $s\in[-\tau_{max},0]$ and for $r\in\Omega$ we write $\varphi(s)(r)=\varphi(s,r),$ and its norm is given by
\[
\| \varphi\|_X =\sup_{s\in[-\tau_{max},0]}\| \varphi(s,\cdot)\|_Y,
\]
where $\| \varphi(s,\cdot)\|_Y=\sup_{r\in\Omega}|\varphi(s,r)|.$ From the assumption on the connectivity kernel, it follows that it is bounded in the following norm
\[
\Vert J\Vert_C = \sup_{(r,r')\in\bar\Omega\times\bar\Omega}|J(r,r')|.
\]
We use the traditional notation for the state of the system at time $t$ 
\[
V_t(s)=V(t+s)\in C(\bar\Omega),\quad s\in[-\tau_{max},0],\ t\geq 0.
\]
Define the nonlinear operator $G:X\to Y$ by
\begin{equation}
G(\varphi)(r):=\int_\Omega J(r,r')S\left(\varphi(-\tau(r,r'),r')\right)d\, r'.
\end{equation}
Then the neural field equation (\ref{NF-1pop_model}) can be written as a DDE as
\begin{equation}\label{DDE_G}
\ppf{V}{t}(t)=-\alpha V(t)+G(V_t),
\end{equation}
where the solution is an element of $C([-\tau_{max},\infty);Y)\cap C^1([0,\infty);Y).$ Similarly, we have the 
state of the solution at time $t$ defined as $V_t(s)(x)=V(t+s,x),$ 
$s\in[-\tau_{max},0],$ $t\geq 0,$ $x\in\bar\Omega.$ It was shown in \cite{vg} that under 
the above assumptions on the connectivity, the firing rate function and delay, the operator $G$ is well-defined and it satisfies a global Lipschitz condition.  

Let $DG(\hat{\varphi})\in\mathcal{L}(X,Y)$ be the Fr\'echet derivative of $G$ at the steady state $\hat\varphi\in X$, given as 
\[
DG(\hat\varphi)(\varphi)(r)=\int_{\Omega}J(r,r')S'(\hat\varphi(-\tau(r,r'),r'))\varphi(-\tau(r,r'),r')dr'.
\]
We assume that $S(0)=0,$ such that \eqref{NF-1pop_model} admits the trivial equilibrium. Then the linearized problem around the $\hat\varphi\equiv 0$ equilibrium is
\begin{equation}
    \begin{cases}\label{NF-linear}
    \dot V (t)=-\alpha V(t)+DG(0)V_t, & t\geq 0\\
    V(t)=\varphi(t), & t\in[-\tau_{max},0].
\end{cases}   
\end{equation}
The solution of the linear problem defines a strongly continuous semigroup $T$ on $X$ generated by $A: D(A)\subset X\to X$, where
\[
D(A)=\{\varphi\in X: \varphi'\in X \text{ and } \varphi'(0)=-\alpha\varphi(0)+DG(0)\varphi\},\quad A\varphi=\varphi'.
\]
We denote by $\rho(A),$ $\sigma(A)$ and $\sigma_p(A)$ the resolvent set, the spectrum and 
the point spectrum of $A,$ respectively. 
%%%%%%%%%%%%%%%%%%%%%%%%%%%%%%%%%%%%%%%%%%%%%
%%%%%%%
\section{Spectral properties of the linearized equation}\label{section spectrum}
In this section, we study the spectral properties of the linearized equation \eqref{NF-linear} when the space domain is the rectangle $\bar\Omega=[-a,a]\times [-b,b]$ and the connectivity kernel is a finite linear combination of exponentials of the form
\begin{equation}\label{connectivity}
J(r,r'):=\sum_{i=1}^N \hat{c}_i e^{-\xi_i\|r-r'\|_1}\quad \forall r,r'\in\bar\Omega,
\end{equation}
where $\hat{c}_i, \xi_i\in\mathbb{C}$, such that $J$ is real-valued. Moreover, the delay function is
\begin{equation}\label{delay function}
\tau(r,r'):=\tau_0+\|r-r'\|_1,\  \forall r,r'\in\bar\Omega,\ \tau_0>0.
\end{equation}

First we deal with the essential spectrum, $\sigma_{ess}(A)$, the part of the spectrum which is invariant under compact perturbations. We can leverage the fact that $DG(0)$ is compact with Theorem 27 of \cite{spek2019neural} to find $\sigma_{ess}(A)=\{-\alpha\}$.

The remaining point spectrum $\sigma_p(A)= \sigma(A)\setminus \sigma_{ess}(A)$ consists of eigenvalues with a finite-dimensional eigenspace. Due to Proposition VI.6.7 of \cite{engel_one-parameter_1999}, eigenvectors $\varphi\in X$ for delay equations have the form
\begin{equation}
\varphi(t)(r)=e^{z t}q(r),
\end{equation} 
with the eigenvalue $z\in\mathbb{C}$ and $q\in Y$ a non-trivial solution of the characteristic equation
\begin{equation}\label{char_eq}
\Delta(z)q:= (z+\alpha)q - \sum_{i=1}^N K_i(z)q =0,
\end{equation}
with the linear operators $K_i(z):Y\to Y$ given by
\begin{equation}\label{integral_operator}
\left(K_i(z)q\right)(r) := c_i(z) \int_\Omega e^{-k_i(z)\|r-r'\|_1}q(r') dr', \quad i=1,2,\dots, N,
\end{equation}
with $k_i(z)=z+\xi_i$ and $c_i(z)=\hat{c}_iS'(0)e^{-\tau_0 z}\neq 0$. The integral equation $\Delta(z)q=0$ is a Fredholm integral equation of the second type acting on multivariate functions.

%%%%%%%%%%%%
\subsection{From an integral equation to a partial differential equation}\label{s:BVP} The main idea to solving $\Delta(z)q=0$ is to transform the integral equation to a PDE. Some exponential solutions $q$ of the PDE, with some conditions on the boundary, are then again solutions of the characteristic equation \eqref{char_eq}.

First we establish that the eigenvectors are smooth. 
\begin{prop}\label{prop:smooth}
For any $z\in\mathbb{C}\setminus\{-\alpha\},$ the solution $q\in Y$ of $\Delta(z)q=0$ is $q\in C^\infty(\bar\Omega).$
\end{prop}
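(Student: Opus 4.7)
The plan is to bootstrap regularity from $q \in C(\bar\Omega)$ up to $C^\infty(\bar\Omega)$. Since $z \neq -\alpha$, the identity $\Delta(z)q = 0$ rearranges to
$$q(r) = \frac{1}{z+\alpha}\sum_{i=1}^N c_i(z)(T_i q)(r),\qquad (T_i q)(r) := \int_\Omega e^{-k_i(z)\|r-r'\|_1} q(r')\, dr',$$
so it suffices to show that each $T_i$ is a smoothing operator on $q$ that gains derivatives with every application.

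The crucial structural fact is the factorization $\|r-r'\|_1 = |r_1-r_1'| + |r_2-r_2'|$. By Fubini, $T_i$ splits as the composition of two one-dimensional convolutions with kernels of the form $e^{-k_i|x-y|}$ on $[-a,a]$ and $[-b,b]$. For any such one-dimensional convolution, an elementary computation---splitting the integral at the singular point $y = x$ and differentiating twice---gives
$$(-\partial_x^2 + k^2)\int_{-c}^c e^{-k|x-y|} g(y)\, dy = 2k\, g(x)$$
whenever $g$ is continuous. Writing $H_i(r_1,r_2) := \int_{-b}^b e^{-k_i|r_2-r_2'|} q(r_1, r_2')\, dr_2'$ and applying this identity in each direction yields $(-\partial_{r_2}^2 + k_i^2) H_i = 2 k_i q$ and $(-\partial_{r_1}^2 + k_i^2) T_i q = 2 k_i H_i$, hence the fourth-order identity
$$(-\partial_{r_1}^2 + k_i^2)(-\partial_{r_2}^2 + k_i^2) T_i q = 4 k_i^2 q.$$

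Starting from $q \in C(\bar\Omega)$, these identities immediately give $T_i q$ continuous pure second partials in either variable separately. A straightforward induction, combining the identity (which trades two derivatives in one coordinate direction for lower-order quantities) with differentiation under the integral sign (which transports $r_1$-smoothness of $q$ through the $r_2$-integration and vice versa), shows that $q \in C^m(\bar\Omega)$ implies $T_i q \in C^{m+2}(\bar\Omega)$. Substituting back into the characteristic equation then improves $q$ itself to $C^{m+2}$, and iterating gives $q \in C^\infty(\bar\Omega)$. I expect the main technical obstacle to be handling the mixed partials $\partial_{r_1}^\alpha \partial_{r_2}^\beta T_i q$ in which $\alpha$ or $\beta$ is odd: the kernel $e^{-k|x|}$ is only Lipschitz at the origin, so bare differentiation under the integral stalls after one derivative, and one must alternate between the PDE identity and differentiation under the integral to propagate joint regularity cleanly through the induction.
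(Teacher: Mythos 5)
Your proof is correct and follows essentially the same route as the paper: the paper simply observes that the range of $K_i(z)$ is contained in $C^2(\bar\Omega)$, so any solution of $\Delta(z)q=0$ lies in $C^2(\bar\Omega)$, and concludes by induction. Your write-up supplies the details behind that one-line smoothing claim (the tensor-product structure of the kernel, the one-dimensional identity $(-\partial_x^2+k^2)\int e^{-k|x-y|}g = 2kg$, and the care needed for mixed partials), which is exactly the content the paper leaves implicit.
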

\begin{proof}
The range of $K_i(z)$ is contained in $C^2(\bar\Omega)$ for all $i=1,\dots,N$ and all $z\in\mathbb C$. Hence, any solution of $\Delta(z)q=0$ is in $C^2(\bar\Omega)$. The result follows by induction. 
\end{proof}
For the remaining part of this section we assume that $q\in C^{\infty}(\bar\Omega)$, so all differential operators applied to $q$ are well-defined.

Differentiating the kernel functions in the integral equation \eqref{char_eq} in the distributional sense w.r.t. one of the spatial variables yields 
\begin{equation}\label{partial_deriv}
\frac{\partial^2}{\partial x}e^{-k_i(z)\|r-r'\|_1}=\left[ k_i^2(z)-2k_i(z)\delta(x-x')\right]e^{-k_i(z)\|r-r'\|_1},
\quad j=1,2,\ i=1,\dots, N,
\end{equation}
with $r=(x,y)$. This motivates the introduction of the differential operators 
\begin{equation}
L_i(z) = \left(k_i^2(z) - \frac{\partial^2}{\partial x} \right)\circ \left(k_i^2(z) -\frac{\partial^2}{\partial y} \right),\ i=1,\dots, N.
\end{equation}
When applying $L_i(z)$ to the integral operator $K_i(z)$ defined in \eqref{integral_operator}, we obtain 
\begin{equation}\label{key_property} 
L_i(z)K_i(z)q=4 c_i(z) k_i^2(z)q \quad \forall q\in Y, \ i=1,\dots, N. 
\end{equation}
So $L_i(z)$ acts like a left-inverse of $K_i(z)$. Using this key property, we find that applying the operator $L(z)=\prod_{i=1}^N L_i(z)$ to the characteristic equation \eqref{char_eq}, leads a the linear constant coefficient PDE 
\begin{equation}\label{PDE} 
L(z)\Delta(z)q = (z+\alpha)\prod_{i=1}^N L_i(z)q
- 4 \sum_{i=1}^N c_i(z)\, k_i^2(z)\prod_{\substack{j=1,\\ j\not=i}}^N L_j(z) q=0.
\end{equation}

We can only generically expect to find solutions of this PDE which are exponential functions or linear combinations of exponential solutions. So we first look for a solution of this PDE in the form
\begin{equation}\label{q_char} 
q(x,y)=e^{\rho x}e^{\nu y},\quad \rho,\nu\in\mathbb{C}, (x,y)\in\Omega.
\end{equation}
This leads to the characteristic polynomial equation $P_z(\rho,\nu)=0$, with $P_z: \mathbb{C}^2\to \mathbb{C}$ given by
\begin{equation}\label{char_pol}
P_z(\rho, \nu)=(z+\alpha)\prod_{i=1}^N(k_i^2(z)-\rho^2)(k_i^2(z)-\nu^2)
-4\sum_{i=1}^N c_i(z)\, k_i^2(z)\prod_{\substack{j=1,\\ j\not=i}}^N (k_j^2(z)-\rho^2)(k_j^2(z)-\nu^2).
\end{equation}
The characteristic polynomial is symmetric under interchanging and negating $\rho$ and $\nu$, i.e., $P_z(\rho,\nu)=P_z(\nu,\rho)=P_z(-\rho,\nu)$. 

The next proposition shows that there are only a few $z\in\mathbb{C}$ such that $P_z(\rho,\nu)=0$ has a nontrivial solution when $\rho = \pm k_i(z)$ or $\nu = \pm k_i(z)$. We exclude these $z$ as they cause difficulties in later theorems.
\begin{prop}\label{exclusion_set_L}
Define the set $\mathcal{L}\subset \mathbb{C}$ by
\[\mathcal{L}:=\{z\in \mathbb{C}: \exists i,j \in \{1,\dots,N\}, i\neq j \text{ such that } k_i^2(z)=k_j^2(z)\text{ or } k_i(z)=0\}.\]
Then $P_z(k_i(z),\nu)\neq 0$ and $P_z(\rho,k_i(z))\neq 0$ for all $\rho, \nu \in \mathbb{C}$ and $i \in \{1,\dots N\}$ if and only if $z \notin \mathcal{L}$.
\end{prop}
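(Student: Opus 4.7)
My plan is to evaluate $P_z(k_i(z), \nu)$ by direct substitution into \eqref{char_pol}. Since every summand of $P_z$ is a product of factors of the form $k_j^2(z) - \rho^2$ (together with factors $k_j^2(z) - \nu^2$), setting $\rho = k_i(z)$ will force many of these factors to vanish, and the whole argument reduces to bookkeeping which summand survives. The companion assertion about $P_z(\rho, k_i(z))$ will then follow for free from the symmetry $P_z(\rho, \nu) = P_z(\nu, \rho)$ already recorded after the definition of $P_z$.

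First I would observe that putting $\rho = k_i(z)$ in the leading term of \eqref{char_pol} makes the $j=i$ factor $(k_i^2(z) - \rho^2)$ equal to zero, so that entire term drops out. Inside the sum $\sum_{m=1}^N$, each summand omits only the single index $j = m$ from its product; therefore for every $m \neq i$ the factor $(k_i^2(z) - \rho^2)$ still appears and kills the summand as well. Only the $m = i$ contribution survives, giving
\begin{equation*}
P_z(k_i(z), \nu) \;=\; -4\, c_i(z)\, k_i^2(z) \prod_{\substack{j=1 \\ j\neq i}}^{N} \bigl(k_j^2(z) - k_i^2(z)\bigr)\bigl(k_j^2(z) - \nu^2\bigr).
\end{equation*}

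Regarded as a polynomial in $\nu$, this expression is nontrivial precisely when its $\nu$-independent prefactor $-4\, c_i(z)\, k_i^2(z) \prod_{j\neq i}(k_j^2(z)-k_i^2(z))$ is nonzero. Because $c_i(z) = \hat{c}_i S'(0) e^{-\tau_0 z}$ never vanishes by assumption, this prefactor is zero if and only if $k_i(z) = 0$ or $k_j^2(z) = k_i^2(z)$ for some $j \neq i$ — exactly the two conditions defining membership in $\mathcal{L}$. Letting $i$ range over $\{1,\dots,N\}$ yields the stated equivalence, and the claim for $P_z(\rho, k_i(z))$ is immediate by symmetry of $P_z$ in its arguments. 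The only possible source of friction is tracking which summands survive after the substitution, but this is index bookkeeping rather than a genuine obstacle.
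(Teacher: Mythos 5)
Your computation is exactly the one in the paper's proof: substitute $\rho=k_i(z)$, note that the leading term and every summand with index $m\neq i$ contain the vanishing factor $k_i^2(z)-\rho^2$, and read off
\begin{equation*}
P_z(k_i(z),\nu)\;=\;-4\,c_i(z)\,k_i^2(z)\prod_{\substack{j=1\\ j\neq i}}^{N}\bigl(k_j^2(z)-k_i^2(z)\bigr)\bigl(k_j^2(z)-\nu^2\bigr).
\end{equation*}
The paper writes out both $P_z(k_i(z),\nu)$ and $P_z(\rho,k_i(z))$ explicitly instead of invoking the symmetry $P_z(\rho,\nu)=P_z(\nu,\rho)$, but that is cosmetic; the index bookkeeping and the observation that $c_i(z)\neq 0$ are the same. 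So in route and in substance you coincide with the paper.

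The one point to flag is your closing inference. You establish that the displayed expression is \emph{not identically zero} as a polynomial in $\nu$ precisely when the $\nu$-independent prefactor $-4c_i(z)k_i^2(z)\prod_{j\neq i}(k_j^2(z)-k_i^2(z))$ is nonzero. That is correct, but it is strictly weaker than the proposition as quantified, which asserts $P_z(k_i(z),\nu)\neq 0$ for \emph{every} $\nu\in\mathbb{C}$. For $N\geq 2$ the surviving product still carries the $\nu$-dependent factors $k_j^2(z)-\nu^2$ with $j\neq i$, so even for $z\notin\mathcal{L}$ one has $P_z(k_i(z),\pm k_j(z))=0$; only for $N=1$, where the product over $j\neq i$ is empty and $P_z(k(z),\nu)=-4c(z)k^2(z)$ is constant in $\nu$, does ``not the zero polynomial'' coincide with ``nowhere zero''. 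The paper's own proof has exactly the same lacuna (it displays the same expressions and asserts they are ``clearly'' nonzero iff $z\notin\mathcal{L}$), so you have faithfully reproduced the intended argument; but your last paragraph, as written, proves that $P_z(k_i(z),\cdot)$ and $P_z(\cdot,k_i(z))$ are nontrivial one-variable polynomials, not the pointwise statement in the proposition. You should either restrict the claim to $N=1$ or state explicitly that for $N\geq 2$ the conclusion is non-vanishing for generic $\nu$ (equivalently, nontriviality of the restricted polynomials) rather than for all $\nu$.
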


\begin{proof}
For $\rho,\nu \in \mathbb{C}$ and $i\in \{1,\dots, N\}$ we have that
\begin{align*}
P_z(\rho,k_i(z))&=-4 c_i(z)\,k_i^2(z)\prod_{\substack{j=1,\\ j\not=i}}^N (k_j^2(z)-\rho^2)(k_j^2(z)-k_i^2(z)),\\    
P_z(k_i(z),\nu)&=-4 c_i(z)\,k_i^2(z)\prod_{\substack{j=1,\\ j\not=i}}^N (k_j^2(z)-k_i^2(z))(k_j^2(z)-\nu^2).
\end{align*}
Clearly these are nonzero if and only if $z\notin \mathcal{L}$. 
\end{proof}

%%%%%%%%%%%%%%
Consequently, for $z\notin \mathcal{L}$ we have that $P_z(\rho,\nu)=0$ is equivalent to 
\begin{equation}\label{char_pol_red}
Q_z(\rho,\nu):= (z+\alpha) - \sum_{i=1}^N \frac{4 c_i(z)\,k_i^2(z)}{(k_i^2(z)-\rho^2)(k_i^2(z)-\nu^2)}=0.
\end{equation}

We now want to use solutions of the PDE \eqref{PDE} to construct an eigenvector $q$ which solves \eqref{char_eq}. Unfortunately the set of the roots $\mathcal{N}(P_z) :=\{(\rho,\nu)\in\mathbb{C}\times\mathbb{C}: P_z(\rho, \nu)=0\}$ of the polynomial $P_z$ is uncountable. So we restrict ourselves to finite linear combinations of exponential solutions. For these finite linear combinations we can construct an explicit condition for solving \eqref{char_eq} that only uses values of $q$ at the boundary of $\bar{\Omega}$. In the next theorem we drop the $z$-dependence of the operators $K_i, L_i$ for clarity.

\begin{thm}\label{equiv_thm}
Let $z\in \mathbb{C}\setminus\{-\alpha\}$ such that $z \notin \mathcal{L}$. Let $q$ be a finite linear combination of exponential solutions of the PDE \eqref{PDE}, i.e. $q=\sum_{(\rho,\nu)\in V} q_{(\rho,\nu)}$, with $V$ a finite subset of $\mathcal{N}(P_z)$ and $q_{(\rho,\nu)}(x,y)=\gamma_{(\rho,\nu)}e^{\rho x}e^{\nu y}$, where $\gamma_{(\rho,\nu)}$ are some constants in $\mathbb{C}$. 

Then $\Delta(z)q=0$ if and only if 
\begin{equation} \label{boundary_cond}
\sum_{i=1}^N \sum_{(\rho,\nu)\in V} \frac{(K_i L_i - L_i K_i)q_{(\rho,\nu)}}{(k_i^2(z)-\rho^2)(k_i^2(z)-\nu^2)}=0.
\end{equation}
\end{thm}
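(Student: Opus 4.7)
The plan is to reduce both directions of the equivalence to a single identity obtained by evaluating $K_i q_{(\rho,\nu)}$ in closed form on each exponential building block of $q$, then summing over $(\rho,\nu)\in V$ and $i$. The commutator $K_iL_i-L_iK_i$ is the natural obstruction to $q_{(\rho,\nu)}$ being a simultaneous eigenfunction of $K_i$, and it is exactly this obstruction that will produce the claimed condition \eqref{boundary_cond}.

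First I would record the trivial computation
\[
L_i q_{(\rho,\nu)} = (k_i^2(z)-\rho^2)(k_i^2(z)-\nu^2)\, q_{(\rho,\nu)},
\]
which follows immediately from $q_{(\rho,\nu)}(x,y)=\gamma_{(\rho,\nu)}e^{\rho x}e^{\nu y}$. Combining this with the key property \eqref{key_property}, namely $L_i K_i q_{(\rho,\nu)} = 4c_i(z)k_i^2(z)\, q_{(\rho,\nu)}$, gives
\[
(K_i L_i - L_i K_i)q_{(\rho,\nu)} = (k_i^2(z)-\rho^2)(k_i^2(z)-\nu^2)\, K_i q_{(\rho,\nu)} - 4c_i(z)k_i^2(z)\, q_{(\rho,\nu)}.
\]
At this point the hypothesis $z\notin\mathcal{L}$ enters crucially: since $(\rho,\nu)\in\mathcal{N}(P_z)$, Proposition~\ref{exclusion_set_L} rules out $\rho=\pm k_i(z)$ and $\nu=\pm k_i(z)$, so the prefactor is nonzero and I can solve for $K_i q_{(\rho,\nu)}$ explicitly.

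Next I would substitute that closed form into $\Delta(z)q = (z+\alpha)q - \sum_{i=1}^N K_i q$, exchange the finite sums over $V$ and $i$, and collect the "diagonal" contributions. These amount to $\sum_{(\rho,\nu)\in V} Q_z(\rho,\nu)\, q_{(\rho,\nu)}$, which vanishes term by term because $P_z(\rho,\nu)=0$ on $V$ is equivalent to $Q_z(\rho,\nu)=0$ whenever $z\notin\mathcal{L}$, by \eqref{char_pol_red}. What survives is exactly
\[
\Delta(z)q = -\sum_{i=1}^N\sum_{(\rho,\nu)\in V}\frac{(K_i L_i - L_i K_i)q_{(\rho,\nu)}}{(k_i^2(z)-\rho^2)(k_i^2(z)-\nu^2)},
\]
and both implications of the theorem follow at once from setting the right-hand side equal to zero.

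The only genuinely technical point is the twofold use of $z\notin\mathcal{L}$: it guarantees the denominators are nonvanishing, and simultaneously legitimises the reduction $P_z=0\Leftrightarrow Q_z=0$ used to cancel the diagonal sum. Both facts are already in hand from Proposition~\ref{exclusion_set_L}, so beyond this check the argument is essentially bookkeeping and I do not expect any serious obstacle.
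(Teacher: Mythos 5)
Your proposal is correct and follows essentially the same route as the paper: both arguments rewrite $K_i q_{(\rho,\nu)}$ via the eigenrelation $L_i q_{(\rho,\nu)}=(k_i^2(z)-\rho^2)(k_i^2(z)-\nu^2)q_{(\rho,\nu)}$ together with \eqref{key_property}, cancel the diagonal sum $\sum_{(\rho,\nu)\in V}Q_z(\rho,\nu)q_{(\rho,\nu)}$ using \eqref{char_pol_red}, and are left with the identity $\Delta(z)q=-\sum_{i}\sum_{(\rho,\nu)\in V}\frac{(K_iL_i-L_iK_i)q_{(\rho,\nu)}}{(k_i^2(z)-\rho^2)(k_i^2(z)-\nu^2)}$, from which both directions are immediate. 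The only cosmetic difference is that you solve explicitly for $K_i q_{(\rho,\nu)}$ from the commutator, whereas the paper substitutes $q_{(\rho,\nu)}=L_i q_{(\rho,\nu)}/\bigl((k_i^2(z)-\rho^2)(k_i^2(z)-\nu^2)\bigr)$ directly; the bookkeeping is identical.
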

\begin{proof} By definition, $q$ is a solution of the PDE \eqref{PDE}. From the definition of the operator $L_i$, for $z\notin \mathcal{L}$ we have that
\[
\frac{L_i q_{(\rho,\nu)}}{(k_i^2(z)-\rho^2)(k_i^2(z)-\nu^2)}=q_{(\rho,\nu)}.
\]
Hence we obtain that
\begin{align*}
\Delta(z)q &= (z+\alpha)q-\sum_{i=1}^N K_i(z)q\\
&=(z+\alpha)q-\sum_{i=1}^N \sum_{(\rho,\nu)\in V} \frac{K_i L_i q_{(\rho,\nu)}}{(k_i^2(z)-\rho^2)(k_i^2(z)-\nu^2)}\\
&=\sum_{(\rho,\nu)\in V}\left((z+\alpha) - \sum_{i=1}^N \frac{4c_i(z)\,k_i^2(z)}{(k_i^2(z)-\rho^2)(k_i^2(z)-\nu^2)}\right)q_{(\rho,\nu)} \\
&\quad - \sum_{i=1}^N \sum_{(\rho,\nu)\in V} \frac{(K_i L_i - L_i K_i)q_{(\rho,\nu)}}{(k_i^2(z)-\rho^2)(k_i^2(z)-\nu^2)}\\
&=-\sum_{i=1}^N \sum_{(\rho,\nu)\in V} \frac{(K_i L_i - L_i K_i)q_{(\rho,\nu)}}{(k_i^2(z)-\rho^2)(k_i^2(z)-\nu^2)},
\end{align*}
where we used that $L_i K_i q=4k_i^2(z)q$ and \eqref{char_pol_red}. 
\end{proof}
%%%%%%%%%%%%%%%

We can further evaluate the condition in \eqref{boundary_cond} using integration by parts. The operator on the left hand side of \eqref{KL_eq} is a operator that acts on $q$ and its normal derivatives at the boundary of $\bar\Omega$. Hence, we will refer to \eqref{boundary_cond} as the boundary condition from this point forward.

For a general $q\in C^\infty(\bar\Omega)$ the following holds for $i=1,2,\dots,N$
\begin{equation}\label{KL_eq}
\left(K_i(z) L_i(z) - L_i(z) K_i(z)\right)q= -2c_i(z)k_i(z)B_i(z)q+c_i(z)e^{-k_i(z)(a+b)}C_i(z)q,
\end{equation}
where
\begin{align}\label{BC_B}
(B_i(z)q)(x,y):=&e^{-k_i(z)(a+x)}\left(\left( k_i(z)-\ppf{}{x} \right)q\right)(-a,y)\nonumber\\
&+e^{-k_i(z)(a-x)}\left(\left( k_i(z)+\ppf{}{x} \right)q\right)(a,y)\nonumber\\
&+e^{-k_i(z)(b+y)}\left(\left( k_i(z)-\ppf{}{y} \right)q\right)(x,-b)\nonumber\\
&+e^{-k_i(z)(b-y)}\left(\left( k_i(z)+\ppf{}{y} \right)q\right)(x,b)
\end{align}
and
\begin{align}\label{BC_C}
(C_i(z)q)(x,y):=&e^{-k_i(z)(x+y)}\left(\left( k_i(z)-\ppf{}{x} \right)\left( k_i(z)-\ppf{}{y} \right)q\right)(-a,-b)\nonumber\\
&+e^{-k_i(z)(x-y)}\left(\left( k_i(z)-\ppf{}{x} \right)\left( k_i(z)+\ppf{}{y} \right)q\right)(-a,b)\nonumber\\
&+e^{k_i(z)(x-y)}\left(\left( k_i(z)+\ppf{}{x} \right)\left( k_i(z)-\ppf{}{y} \right)q\right)(a,-b)\nonumber\\
&+e^{k_i(z)(x+y)}\left(\left( k_i(z)+\ppf{}{x} \right)\left( k_i(z)+\ppf{}{y} \right)q\right)(a,b).
\end{align}

Finding eigenvectors with an arbitrary number of exponentials is still hard. However, we can show that if we have such an eigenvector, then a subset of at most $4N(N+1)$ exponentials form also an eigenvector. To prove this, we first need the following equivalence relation.

%This is because there exists a natural equivalence relation of $\mathcal{N}(P_z)$, that separates it into equivalence classes of at most $2(N+1)$ elements. 

\begin{defin}
For every $z\in\mathbb{C},$ define the $\sim_z$ relation on $\mathbb{C}$ as follows: 
For $\rho, \nu \in \mathbb{C},$
\begin{equation}
\rho \sim_z \nu \text{ if and only if } P_z(\rho, \nu)=0 \text{ or } \rho^2 = \nu^2.
\end{equation}
\end{defin}
\begin{prop}\label{equivalence_class_Koen}
Let $z \notin \mathcal{L}$, then the relation $\sim_z$ defines an equivalence relation on $\mathbb{C}.$
\end{prop}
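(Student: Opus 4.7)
The plan is to verify the three axioms of an equivalence relation. Reflexivity is immediate, since $\rho^2 = \rho^2$ gives $\rho \sim_z \rho$ via the second clause of the definition. Symmetry follows at once from the already observed identity $P_z(\rho,\nu) = P_z(\nu,\rho)$ (noted right after \eqref{char_pol}) together with the obvious symmetry of the condition $\rho^2 = \nu^2$. All the real work lies in transitivity.

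For transitivity, my main idea is to rewrite the polynomial condition $P_z(\rho,\nu) = 0$ as a level-set condition for a single rational function of one complex variable. Since $z \notin \mathcal{L}$, Proposition \ref{exclusion_set_L} guarantees that whenever $P_z(\rho,\nu) = 0$ one automatically has $\rho^2, \nu^2 \notin \{k_i^2(z)\}_{i=1}^N$, so the reduced form $Q_z(\rho,\nu) = 0$ in \eqref{char_pol_red} applies. Applying the partial-fraction identity
\[
\frac{1}{(k_i^2(z) - u)(k_i^2(z) - v)} \;=\; \frac{1}{u-v}\left(\frac{1}{k_i^2(z) - u} - \frac{1}{k_i^2(z) - v}\right) \quad (u \neq v),
\]
summing over $i$, and introducing
\[
g(u) := \sum_{i=1}^N \frac{4 c_i(z)\, k_i^2(z)}{k_i^2(z) - u}, \qquad h(u) := g(u) - (z+\alpha)\, u,
\]
a short rearrangement yields the clean reformulation, valid for $\rho^2 \neq \nu^2$ with both squares lying outside $\{k_i^2(z)\}_i$,
\[
Q_z(\rho,\nu) = 0 \iff h(\rho^2) = h(\nu^2).
\]
In other words, away from the excluded values $\pm k_i(z)$, the relation $\sim_z$ is exactly the pullback of equality along the map $\rho \mapsto h(\rho^2)$.

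With this reformulation in hand, transitivity becomes a short case analysis. Assume $\rho \sim_z \nu$ and $\nu \sim_z \mu$; we want $\rho \sim_z \mu$. If $\rho^2 = \mu^2$ we are done, so suppose $\rho^2 \neq \mu^2$. If $\rho^2 = \nu^2$, then $\nu^2 \neq \mu^2$, so $\nu \sim_z \mu$ forces $P_z(\nu,\mu) = 0$, hence $\nu^2, \mu^2 \notin \{k_i^2(z)\}_i$ and $h(\nu^2) = h(\mu^2)$; since $\rho^2 = \nu^2$, we get $h(\rho^2) = h(\mu^2)$, and applying the reformulation in the reverse direction yields $P_z(\rho,\mu) = 0$. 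The case $\nu^2 = \mu^2$ is symmetric. Finally, if both $\rho^2 \neq \nu^2$ and $\nu^2 \neq \mu^2$, then $P_z(\rho,\nu) = P_z(\nu,\mu) = 0$ gives $h(\rho^2) = h(\nu^2) = h(\mu^2)$ directly, and once more $P_z(\rho,\mu) = 0$. The only mildly delicate point is the bookkeeping around the pole set of $h$, which is absorbed uniformly by the assumption $z \notin \mathcal{L}$ together with Proposition \ref{exclusion_set_L}; past that, the argument is essentially the fact that the fibres of a function partition its domain.
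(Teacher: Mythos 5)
Your proof is correct and is essentially the paper's own argument in a different packaging: your identity $Q_z(\rho,\nu)=0 \iff h(\rho^2)=h(\nu^2)$ is exactly the content of the paper's equation \eqref{rho_nu_q}, which writes $(\rho^2-\nu^2)Q_z(\rho,\nu)$ as $h(\nu^2)-h(\rho^2)$ and then telescopes over $\rho_1,\rho_2,\rho_3$. Your explicit case analysis around the pole set is slightly more careful than the paper's one-line telescoping, but the key identity and the role of Proposition \ref{exclusion_set_L} are the same.
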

\begin{proof}
By definition $\sim_z$ is reflexive and by the symmetry of $P_z$, $\sim_z$ is also symmetric. 

Let $z \notin \mathcal{L}$. From Proposition \ref{exclusion_set_L}, we have that $P_z(\rho,\nu)=0$ if and only if $Q_z(\rho,\nu)=0$. Using equation \eqref{char_pol_red} we can deduce that 
\begin{equation}\label{rho_nu_q}
\begin{split}
(\rho^2-\nu^2)Q_z(\rho,\nu)& = (z+\alpha)(\rho^2-\nu^2)-(\rho^2-\nu^2) \sum_{i=1}^N \frac{4c_i(z)\,k_i^2(z)}{(k_i^2(z)-\rho^2)(k_i^2(z)-\nu^2)}\\
&=(z+\alpha)\rho^2 - \sum_{i=1}^N \frac{4c_i(z)\,k_i^2(z)}{(k_i^2(z)-\rho^2)} - (z+\alpha)\nu^2 + \sum_{i=1}^N\frac{4c_i(z)\,k_i^2(z)}{(k_i^2(z)-\nu^2)}.
\end{split}
\end{equation}
Let $\rho_1 \sim_z \rho_2$, $\rho_2 \sim_z \rho_3$. Due to equation \eqref{rho_nu_q}
\[(\rho_1^2-\rho_3^2)Q_z(\rho_1,\rho_3) = (\rho_1^2-\rho_2^2)Q_z(\rho_1,\rho_2) + (\rho_2^2-\rho_3^2)Q_z(\rho_2,\rho_3) = 0.\]
We conclude that $\rho_1 \sim_z \rho_3$ and hence $\sim_z$ is transitive. \end{proof}
For every $\nu \in \mathbb{C}$, we can construct an equivalence class $[\nu]_z = \{\rho \in \mathbb{C}| \rho \sim_z \nu\}$. Using this equivalence relation we can partition the null-space of $P_z$ into the following $E_{\nu,z}$ sets.

\begin{defin}
For every $z,\nu \in \mathbb{C}$, define the set $E_{\nu,z} \subset \mathcal{N}(P_z)$ as
\begin{equation}
E_{\nu,z} := \{(\rho_1,\rho_2) \in \mathcal{N}(P_z) | \rho_1 \sim_z \nu \sim_z \rho_2\}.
\end{equation}
\end{defin}

\begin{prop}[Partition principle]\label{partition_principle}
Let $z \notin \mathcal{L}$ and $\nu \in \mathbb{C}$. For all $(\rho_1,\nu_1) \in E_{\nu,z}$ and $(\rho_2,\nu_2)\in \mathcal{N}(P_z)\setminus E_{\nu,z}$, $\rho_1^2 \neq \rho_2^2$ and $\nu_1^2 \neq \nu_2^2$.
\end{prop}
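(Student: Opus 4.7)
The plan is to prove the contrapositive in each of the two cases. Specifically, I will show that if either $\rho_1^2=\rho_2^2$ or $\nu_1^2=\nu_2^2$, then $(\rho_2,\nu_2)$ must lie in $E_{\nu,z}$, contradicting the hypothesis. The whole argument runs on the equivalence relation $\sim_z$ established in Proposition \ref{equivalence_class_Koen}, whose transitivity is the decisive property.

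First I would unpack the definitions. Membership $(\rho_1,\nu_1)\in E_{\nu,z}$ means $\rho_1\sim_z \nu$ and $\nu_1\sim_z\nu$. Membership $(\rho_2,\nu_2)\in\mathcal{N}(P_z)$ means $P_z(\rho_2,\nu_2)=0$, which by the definition of $\sim_z$ already gives $\rho_2\sim_z\nu_2$. This is the crucial initial link that couples the two coordinates of any element of $\mathcal{N}(P_z)$.

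Next I would handle the two cases in parallel. Suppose $\rho_1^2=\rho_2^2$. Then $\rho_1\sim_z\rho_2$ directly from the second clause in the definition of $\sim_z$. Combined with $\rho_1\sim_z\nu$, transitivity yields $\rho_2\sim_z\nu$, and then transitivity applied to $\rho_2\sim_z\nu_2$ gives $\nu_2\sim_z\nu$. Therefore $(\rho_2,\nu_2)\in E_{\nu,z}$, a contradiction. Suppose now $\nu_1^2=\nu_2^2$. Then $\nu_1\sim_z\nu_2$, and together with $\nu_1\sim_z\nu$ this gives $\nu_2\sim_z\nu$ by transitivity; combined with $\rho_2\sim_z\nu_2$ it yields $\rho_2\sim_z\nu$, so again $(\rho_2,\nu_2)\in E_{\nu,z}$, contradicting the assumption.

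There is no real obstacle here: once Proposition \ref{equivalence_class_Koen} is available, the statement is essentially a formal consequence of $\sim_z$ being an equivalence relation together with the fact that any element of $\mathcal{N}(P_z)$ has its two coordinates related by $\sim_z$. The only place where the hypothesis $z\notin\mathcal{L}$ is used is indirectly, through Proposition \ref{equivalence_class_Koen}, to guarantee transitivity of $\sim_z$; without transitivity the chain arguments above would fail. I would therefore present the proof as a short two-case argument, explicitly invoking reflexivity (to absorb the $\rho^2=\nu^2$ clause) and transitivity, and close with the observation that $(\rho_2,\nu_2)\in E_{\nu,z}$ contradicts the hypothesis in each case.
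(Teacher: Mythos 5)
Your proof is correct and follows essentially the same route as the paper: argue by contradiction, use $\rho_2\sim_z\nu_2$ (from $P_z(\rho_2,\nu_2)=0$) and the transitivity of $\sim_z$ from Proposition \ref{equivalence_class_Koen} to build the chain $\nu\sim_z\rho_1\sim_z\rho_2\sim_z\nu_2$, and conclude $(\rho_2,\nu_2)\in E_{\nu,z}$. The paper's version is just a more compressed statement of the identical argument.
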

\begin{proof}
Let $(\rho_1,\nu_1) \in E_{\nu,z}$ and $(\rho_2,\nu_2)\in \mathcal{N}(P_z)\setminus E_{\nu,z}$. Suppose $\rho_1^2 = \rho_2^2$, then $\nu \sim_z \rho_1 \sim_z \rho_2 \sim_z \nu_2$ and hence by Proposition \ref{equivalence_class_Koen}, $(\rho_2,\nu_2)\in E_{\nu,z}$. A similar reasoning holds when $\nu_1^2 \neq \nu_2^2$, so we have proven this statement by contradiction. \end{proof}

This property makes us able to split the boundary condition \eqref{boundary_cond} into multiple independent conditions corresponding to a single set $E_{\nu,z}$. Note that the above property does not hold for any non-empty, proper subset of $E_{\nu,z}$.

\begin{prop}
Suppose $z\notin \mathcal{L}$ and that $q$ is a finite linear combination of exponential solutions as in Theorem \ref{equiv_thm}, which solves $\Delta(z)q=0$. Then for all $\nu_1\in\mathbb{C}$ for which there exists a $\rho_1\in\mathbb{C}$ such that $(\rho_1,\nu_1)\in V$, with $V$ a finite subset of $\mathcal{N}(P_z)$, 
\begin{equation}
\sum_{i=1}^N \sum_{(\rho,\nu)\in E_{\nu_1,z}} \frac{B_i(z) q_{(\rho,\nu)}}{(k_i^2(z)-\rho^2)(k_i^2(z)-\nu^2)}=0.
\end{equation}
\end{prop}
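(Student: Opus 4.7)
The plan is to derive the claim from Theorem~\ref{equiv_thm} combined with the identity~\eqref{KL_eq} and the linear independence of two-variable complex exponentials. From Theorem~\ref{equiv_thm}, the hypothesis $\Delta(z)q=0$ is equivalent to the pointwise identity
\[
\sum_{i=1}^N\sum_{(\rho,\nu)\in V}\frac{-2c_i(z)k_i(z)\,B_i(z)q_{(\rho,\nu)}+c_i(z)e^{-k_i(z)(a+b)}C_i(z)q_{(\rho,\nu)}}{(k_i^2(z)-\rho^2)(k_i^2(z)-\nu^2)}=0
\]
on $\bar\Omega$. A direct calculation from \eqref{BC_B}--\eqref{BC_C} applied to $q_{(\rho,\nu)}(x,y)=\gamma_{(\rho,\nu)}e^{\rho x}e^{\nu y}$ shows that the operator $B_i(z)$ produces only exponentials whose $(x,y)$ exponent-pair has the ``mixed'' form $(\pm k_i(z),\nu)$ or $(\rho,\pm k_i(z))$, whereas $C_i(z)$ produces only exponentials with ``corner'' exponent-pair $(\pm k_i(z),\pm k_i(z))$.

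Next I would use the hypothesis $z\notin\mathcal{L}$ to sort these exponent-pairs into pairwise disjoint families. Since $z\notin\mathcal{L}$, the numbers $\{k_i^2(z)\}_{i=1}^N$ are distinct and nonzero, and by Proposition~\ref{exclusion_set_L} every $(\rho,\nu)\in V\subset\mathcal{N}(P_z)$ satisfies $\rho,\nu\notin\{\pm k_j(z) : j=1,\ldots,N\}$. This already separates the $B_i$-exponentials from the $C_i$-exponentials, because the former always have at least one coordinate outside $\{\pm k_j(z)\}_j$. The Partition Principle (Proposition~\ref{partition_principle}) refines this further: for distinct equivalence classes $[\nu_1]_z\neq[\nu_2]_z$, the squared exponents $\rho^2$ and $\nu^2$ occurring in $V\cap E_{\nu_1,z}$ are distinct from those occurring in $V\cap E_{\nu_2,z}$, so the $B_i$-exponent-pairs from different equivalence classes share no coordinate, not even up to sign.

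Invoking the classical linear independence of $\{(x,y)\mapsto e^{\alpha x}e^{\beta y}\}$ across distinct $(\alpha,\beta)\in\mathbb{C}^2$, the pointwise identity above decouples into the independent vanishing of the $C_i$-family and of the $B_i$-family restricted to each equivalence class. For a fixed $[\nu_1]_z$ this yields
\[
\sum_{i=1}^N\sum_{(\rho,\nu)\in V\cap E_{\nu_1,z}}\frac{-2c_i(z)k_i(z)\,B_i(z)q_{(\rho,\nu)}}{(k_i^2(z)-\rho^2)(k_i^2(z)-\nu^2)}=0.
\]
Because the exponent-pairs produced by $B_i(z)$ and by $B_j(z)$ differ for $i\neq j$ (using $k_i(z)\neq\pm k_j(z)$, again a consequence of $z\notin\mathcal{L}$), the same linear-independence argument lets us strip off the nonzero scalar $-2c_i(z)k_i(z)$ index by index, and summing the resulting per-$i$ identities recovers the claim.

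The main obstacle is the combinatorial bookkeeping in the middle step: one has to enumerate every exponent-pair that $B_i(z)q_{(\rho,\nu)}$ and $C_i(z)q_{(\rho,\nu)}$ generate and verify, using both Proposition~\ref{exclusion_set_L} and the full Partition Principle (in particular the fact that it excludes equality of $\rho^2$ and $\nu^2$, hence equality up to sign), that no two exponent-pairs from different families or different equivalence classes coincide. Once that clean partition is established, the rest is standard linear independence of exponentials.
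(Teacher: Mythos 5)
Your proposal is correct and follows essentially the same route as the paper's proof: expand the boundary condition \eqref{boundary_cond} via \eqref{KL_eq}--\eqref{BC_C}, observe that $B_i(z)q_{(\rho,\nu)}$ contributes exponent-pairs $(\pm k_i(z),\nu)$, $(\rho,\pm k_i(z))$ while $C_i(z)q_{(\rho,\nu)}$ contributes $(\pm k_i(z),\pm k_i(z))$, and then use $z\notin\mathcal{L}$ together with Proposition~\ref{partition_principle} to conclude by linear independence of distinct two-variable exponentials that the $E_{\nu_1,z}$-block of $B$-terms vanishes on its own. Your extra step of stripping off the nonzero factors $-2c_i(z)k_i(z)$ index by index is a point the paper's write-up glosses over, but it is the same argument.
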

\begin{proof}
Let $(\rho_1,\nu_1)\in V$ and $q$ as in Theorem \ref{equiv_thm}. Using \eqref{KL_eq}-\eqref{BC_C} we find that for all $(\rho,\nu)\in V$ and $i=1,\dots,N$
\begin{equation*}
(K_i L_i-L_i K_i)q_{(\rho,\nu)}=-2c_i(z)k_i(z)B_i(z)q_{(\rho,\nu)}+c_i(z)e^{-k_i(z)(a+b)}C_i(z)q_{(\rho,\nu)}\end{equation*}
with
\begin{equation}
\begin{split}
\left(B_i(z) q_{(\rho,\nu)}\right)(x,y)=& \gamma_{(\rho,\nu)}\left[e^{-k_i(z)(a+x) -\rho a + \nu y}(\rho-k_i(z))\right.\\
&-e^{-k_i(z)(a-x)+\rho a + \nu y}(\rho+k_i(z))\\
&+e^{-k_i(z)(b+y)+\rho x-\nu b}(\nu-k_i(z))\\
&\left.-e^{-k_i(z)(b-y)+\rho x+\nu b}(\nu+k_i(z))\right],\\[5pt]
\left(C_i(z) q_{(\rho,\nu)}\right)(x,y)=&\gamma_{(\rho,\nu)}\left[e^{-k_i(z)(x+y)-\rho a - \nu b}(\rho-k_i(z))(\nu-k_i(z))\right.\\
&-e^{-k_i(z)(x-y)-\rho a + \nu b}(\rho-k_i(z))(\nu+k_i(z))\\
&-e^{k_i(z)(x-y)+\rho a - \nu b}(\rho+k_i(z))(\nu-k_i(z))\\
&\left.+e^{k_i(z)(x+y)+\rho a + \nu b}(\rho+k_i(z))(\nu+k_i(z))\right].
\end{split}
\end{equation}
We note that $B_i(z) q_{(\rho,\nu)}$ is a linear combination of exponentials $e^{\pm k_i(z) x}e^{\nu y}$ and $e^{\rho x}e^{\pm k_i(z) y}$ and that $C_i(z) q_{(\rho,\nu)}$ of $e^{\pm k_i(z) x}e^{\pm k_i(z) y}$.

As $z\notin \mathcal{L}$, we get by Proposition \ref{exclusion_set_L} that for all $(\rho,\nu)\in V$, $\rho,\nu \neq \pm k_i(z)$ for $i\in \{1,\dots, N\}$. Furthermore, as $(\rho_1,\nu_1)\in E_{\nu_1,z}$, by Proposition \ref{partition_principle}, for all $(\rho_2,\nu_2) \in V\setminus E_{\nu_1,z}$, $\rho_1^2 \neq  \rho_2^2$ and $\nu_1^2 \neq \nu_2^2$. Hence the elements of the set 
\[\{e^{\pm k_i(z) x}e^{\nu_1 y}, e^{\rho_1 x}e^{\pm k_i(z) y},e^{\pm k_i(z) x}e^{\nu_2 y},e^{\rho_2 x}e^{\pm k_i(z) y}, e^{\pm k_i(z) x}e^{\pm k_i(z) y} \mid i= 1,\dots, N \}\] 
are linearly independent.

We conclude that the terms
\begin{align*}
\sum_{i=1}^N \sum_{(\rho,\nu)\in E_{\nu_1,z}} \frac{B_i(z) q_{(\rho,\nu)}}{(k_i^2(z)-\rho^2)(k_i^2(z)-\nu^2)},\\
\sum_{i=1}^N \sum_{(\rho,\nu)\in V\setminus E_{\nu_1,z}} \frac{B_i(z) q_{(\rho,\nu)}}{(k_i^2(z)-\rho^2)(k_i^2(z)-\nu^2)},\\
\sum_{i=1}^N \sum_{(\rho,\nu)\in V} \frac{C_i(z) q_{(\rho,\nu)}}{(k_i^2(z)-\rho^2)(k_i^2(z)-\nu^2)}
\end{align*}
are linearly independent. As $q$ satisfies the boundary conditions \eqref{boundary_cond}, these should vanish. 
\end{proof}

Generically, a polynomial of degree $2N$ has $2N$ distinct roots. As we show in the proposition below, this implies a generic representation of $E_{\nu,z}$.
\begin{prop}\label{equiv_class_size}
Let $z \notin \mathcal{L}, \nu \in \mathbb{C}$ and suppose that the equivalence class $[\nu]_z$ has $2(N+1)$ distinct elements $\pm \rho_1, \dots, \pm \rho_{N+1}$. Then $E_{\nu,z} = \{(\pm \rho_i, \pm \rho_j) \mid i,j\in \{1,\dots,N+1\}, i\neq j\}$ and $\rho_i\neq 0$ for $i \in \{1,\dots,N+1\}$.
\end{prop}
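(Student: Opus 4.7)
The plan is to rewrite the relation $\sim_z$ in terms of a single-variable rational function and to read off everything from its root structure. Introduce
\begin{equation*}
f(\sigma) := (z+\alpha)\sigma - \sum_{i=1}^N \frac{4c_i(z)k_i^2(z)}{k_i^2(z)-\sigma},
\end{equation*}
so that the identity \eqref{rho_nu_q} becomes $(\rho^2-\nu^2)Q_z(\rho,\nu) = f(\rho^2)-f(\nu^2)$. Since $z\notin\mathcal{L}$, Proposition~\ref{exclusion_set_L} together with \eqref{char_pol_red} shows that $P_z(\rho,\nu)=0$ is equivalent to $Q_z(\rho,\nu)=0$, so $\rho\sim_z\nu$ is equivalent to $f(\rho^2)=f(\nu^2)$. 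Clearing denominators, $f(\sigma)=f(\nu^2)$ becomes a polynomial equation of degree $N+1$ in $\sigma$ with leading coefficient $(-1)^N(z+\alpha)$; at most $N+1$ distinct values of $\rho^2$, hence at most $2(N+1)$ distinct values of $\rho$, can therefore belong to $[\nu]_z$.

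First I would dispose of the nonvanishing claim: if some $\rho_i=0$ then $\rho_i=-\rho_i$, leaving at most $2(N+1)-1$ distinct elements in $[\nu]_z$, contradicting the hypothesis, so every $\rho_i\neq 0$ (this also forces $z+\alpha\neq 0$, so the polynomial really has degree $N+1$). Since $[\nu]_z$ attains the upper bound $2(N+1)$, the degree-$(N+1)$ polynomial $f(\sigma)-f(\nu^2)$ must have its $N+1$ roots $\rho_1^2,\dots,\rho_{N+1}^2$ pairwise distinct, hence all simple; equivalently, $f'(\rho_i^2)\neq 0$ for each $i$. Note also $\rho_i^2\neq k_j^2(z)$ for any $i,j$, since $f$ has a pole there whereas $f(\rho_i^2)=f(\nu^2)$ is finite, so the factor $\prod_k(k_k^2(z)-\rho_i^2)$ never vanishes and the equivalence $P_z=0\Leftrightarrow Q_z=0$ remains valid at every point at which we evaluate.

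Next, for $i\neq j$ one has $\rho_i^2\neq\rho_j^2$ and $f(\rho_i^2)=f(\rho_j^2)=f(\nu^2)$, so the identity yields $Q_z(\rho_i,\rho_j)=0$, hence $P_z(\rho_i,\rho_j)=0$. The symmetries $P_z(\rho,\nu)=P_z(-\rho,\nu)=P_z(\rho,-\nu)$ then place the whole sign-family $(\pm\rho_i,\pm\rho_j)$ in $\mathcal{N}(P_z)\cap([\nu]_z\times[\nu]_z)\subseteq E_{\nu,z}$. To finish, I must exclude the diagonal pairs $(\pm\rho_i,\pm\rho_i)$. A direct computation from \eqref{char_pol_red} gives
\begin{equation*}
Q_z(\rho_i,\rho_i) = (z+\alpha) - \sum_{k=1}^N \frac{4c_k(z)k_k^2(z)}{(k_k^2(z)-\rho_i^2)^2} = f'(\rho_i^2),
\end{equation*}
which is nonzero by the simple-root observation. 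Hence $P_z(\pm\rho_i,\pm\rho_i)\neq 0$, so diagonal pairs lie outside $E_{\nu,z}$, and combined with the previous paragraph this yields exactly $E_{\nu,z}=\{(\pm\rho_i,\pm\rho_j)\mid i\neq j\}$.

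The main conceptual point, and the only step that goes beyond bookkeeping, is the identity $Q_z(\rho,\rho)=f'(\rho^2)$: it converts the exclusion of diagonal pairs into the statement that a degree-$(N+1)$ polynomial with $N+1$ distinct roots has all of them simple. Once this is in hand, the partition $E_{\nu,z}=\{(\pm\rho_i,\pm\rho_j)\mid i\neq j\}$ falls out from the identity $(\rho^2-\nu^2)Q_z(\rho,\nu)=f(\rho^2)-f(\nu^2)$ and the saturation hypothesis on $[\nu]_z$.
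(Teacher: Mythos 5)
Your proof is correct, and it takes a genuinely different --- and in fact more complete --- route than the paper's. The paper's own argument is a short counting step: it views $P_z(\rho,\nu)$ as a polynomial of degree $2N$ in $\rho$, concludes that $[\nu]_z$ consists of its $2N$ distinct roots together with $\pm\nu$, and gets $\rho_i\neq 0$ from the distinctness of $\rho_i$ and $-\rho_i$; the claimed form of $E_{\nu,z}$ is then essentially read off without separately checking which sign-pairs actually lie in $\mathcal{N}(P_z)$. You instead pass to the variable $\sigma=\rho^2$ and encode the relation through the rational function $f$, so that $[\nu]_z$ is the preimage of $f(\nu^2)$ and the saturation hypothesis forces the degree-$(N+1)$ polynomial $f(\sigma)-f(\nu^2)$ to have $N+1$ simple, nonzero roots. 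The payoff is the identity $Q_z(\rho,\rho)=f'(\rho^2)$, which cleanly excludes the diagonal pairs $(\pm\rho_i,\pm\rho_i)$ from $\mathcal{N}(P_z)$, while $(\rho^2-\nu^2)Q_z(\rho,\nu)=f(\rho^2)-f(\nu^2)$ puts every off-diagonal pair in; verifying both inclusions for $E_{\nu,z}$ is exactly what the paper's proof leaves implicit, so your version buys a rigorous identification of $E_{\nu,z}$ at the cost of a slightly longer setup. Two small presentational points: the parenthetical ``this also forces $z+\alpha\neq 0$'' is better justified by the count (if $z+\alpha=0$ the cleared polynomial has degree at most $N$, capping $|[\nu]_z|$ at $2N$) than by $\rho_i\neq 0$; and the equivalence $\rho\sim_z\nu\Leftrightarrow f(\rho^2)=f(\nu^2)$ tacitly uses $\nu\neq\pm k_i(z)$, which does follow from the hypothesis because Proposition~\ref{exclusion_set_L} would otherwise leave $[\nu]_z=\{\pm k_i(z)\}$ with only two elements. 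Neither gap is substantive.
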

\begin{proof}
For given $\nu\in\mathbb{C}$, $P_z(\rho,\nu)$ is a polynomial of order $2N$ in $\rho$. So if $[\nu]_z$ has 
$2(N+1)$ distinct elements, then $P_z(\rho,\nu)=0$ must have $2N$ distinct solutions $\rho = \pm\rho_1, \dots, \pm \rho_{N}$, which are not equal to $\pm \nu$, so we can define $\rho_{N+1}:=\nu$. Furthermore as $\rho_i$ must be distinct from $-\rho_i$, this implies that $\rho_i\neq 0$ for $i \in \{1,\dots,N\}$. 
\end{proof}

The sum of exponentials corresponding to $E_{\nu,z}$ can equivalently be expressed as
\begin{equation}
\begin{split}
q_{E_{\nu,z}}(x,y)=\sum_{i,j=1}^{N+1} &\left[d^{ee}_{ij} \cosh(\rho_i x)\cosh(\rho_j y)+d^{eo}_{ij} \cosh(\rho_i x)\sinh(\rho_j y)\right.\\
+&\left.d^{oe}_{ij} \sinh(\rho_i x)\cosh(\rho_j y)+d^{oo}_{ij} \sinh(\rho_i x)\sinh(\rho_j y)\right],
\end{split}\label{eq:eigenvector_def}
\end{equation}
where we require that $d_{ii}=0$ for $i=\{1,\dots,N+1\}$. The coefficients $d^{ee}_{ij},d^{eo}_{ij},d^{oe}_{ij},d^{oo}_{ij}$ form the matrices $D^{ee}, D^{eo}, D^{oe}, D^{oo} \in \mathbb{C}^{(N+1)\times (N+1)}$ with a zero diagonal. The superscripts $e,o$ refer to coefficients of even and odd functions of $x$ and $y$, respectively.

We define the matrices $S^{e},S^{o}\in \mathbb{C}^{N\times (N+1)}$ with elements
\begin{equation}\label{S_matrices}
\begin{split}
S^{e}_{ij}(r,\nu,z)&:= \frac{k_i(z) \cosh(\rho_j(\nu,z)r) + \rho_j(\nu,z) \sinh(\rho_j(\nu,z) r)}{k_i^2(z)-\rho_j^2(\nu,z)}\\
S^{o}_{ij}(r,\nu,z)&:= \frac{k_i(z) \sinh(\rho_j(\nu,z)r) + \rho_j(\nu,z) \cosh(\rho_j(\nu,z) r)}{k_i^2(z)-\rho_j^2(\nu,z)}
\end{split}
\end{equation}
for $i \in \{1,\dots,N\}, j \in \{1,\dots,N+1\}$. The superscripts $e,o$ refer to even and odd functions in $\rho_j$, respectively.

\begin{prop}
Let $z \notin \mathcal{L}, \nu_1 \in \mathbb{C}$ and suppose that the equivalence class $[\nu_1]_z$ has $2(N+1)$ distinct elements. Then 
\begin{equation}\label{Bq}
\sum_{l=1}^N \sum_{(\rho,\nu)\in E_{\nu_1,z}} \frac{B_l(z) q_{(\rho,\nu)}}{(k_l^2(z)-\rho^2)(k_l^2(z)-\nu^2)}=0
\end{equation}
implies that
\begin{equation}\label{Cq}
\sum_{l=1}^N \sum_{(\rho,\nu)\in E_{\nu_1,z}} \frac{C_l(z) q_{(\rho,\nu)}}{(k_l^2(z)-\rho^2)(k_l^2(z)-\nu^2)}=0.
\end{equation}
\end{prop}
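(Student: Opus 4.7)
The plan is to expand both sums in the $\cosh/\sinh$ basis \eqref{eq:eigenvector_def} and show that \eqref{Bq} implies eight matrix identities on the coefficient matrices $D^{ab}$, which are strictly stronger than the single matrix identity implied by \eqref{Cq}.

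First I would substitute \eqref{eq:eigenvector_def} into the $B$-sum, using $\cosh(\rho_i x)=\tfrac12\sum_{\epsilon=\pm1}e^{\epsilon\rho_i x}$ and $\sinh(\rho_i x)=\tfrac12\sum_{\epsilon=\pm1}\epsilon\, e^{\epsilon\rho_i x}$ together with the explicit form of $B_l q_{(\rho,\nu)}$ from the previous proof. The sums over $(\rho,\nu)=(\pm\rho_i,\pm\rho_j)\in E_{\nu_1,z}$ in each of the four boundary exponentials of $B_l q_{(\rho,\nu)}$ collapse via elementary identities such as
\begin{equation*}
\sum_{\epsilon=\pm1}e^{-\epsilon\rho_i a}(\epsilon\rho_i - k_l) = -2\big(k_l\cosh(\rho_i a)+\rho_i\sinh(\rho_i a)\big) = -2(k_l^2-\rho_i^2)\,S^e_{li}(a),
\end{equation*}
and three siblings that produce $S^o_{li}(a)$, $S^e_{lj}(b)$ or $S^o_{lj}(b)$ according to the parity of $q$ in the corresponding variable. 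After dividing by $(k_l^2-\rho_i^2)(k_l^2-\rho_j^2)$, a routine calculation shows that, writing $\phi^e=\cosh$, $\phi^o=\sinh$, each parity block $d^{ab}_{ij}\phi^a(\rho_i x)\phi^b(\rho_j y)$ contributes
\begin{equation*}
-2 d^{ab}_{ij}\!\left[\frac{e^{-k_l a}\,S^a_{li}(a)}{k_l^2-\rho_j^2}\,\phi^a(k_l x)\phi^b(\rho_j y) + \frac{e^{-k_l b}\,S^b_{lj}(b)}{k_l^2-\rho_i^2}\,\phi^a(\rho_i x)\phi^b(k_l y)\right]
\end{equation*}
to the term indexed by $l$ in the $B$-sum.

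Because $z\notin\mathcal L$ (Propositions \ref{exclusion_set_L} and \ref{equiv_class_size}), the absolute values of $k_1,\dots,k_N,\rho_1,\dots,\rho_{N+1}$ are all distinct, so the functions $\phi^a(k_l x)\phi^b(\rho_j y)$ and $\phi^a(\rho_i x)\phi^b(k_l y)$ appearing in the expansion are linearly independent. Matching the coefficient of each basis function in \eqref{Bq} to zero gives the eight matrix identities
\begin{equation*}
S^a(a)\,D^{ab} = 0 \quad\text{and}\quad D^{ab}\,S^b(b)^T = 0,\qquad a,b\in\{e,o\}.
\end{equation*}

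Finally, the analysis of $C_l$ is shorter since it only involves corner values. For $q^{(ab)}_{ij}:=\phi^a(\rho_i x)\phi^b(\rho_j y)$, each of the four values $(k_l\pm\partial_x)(k_l\pm\partial_y)q^{(ab)}_{ij}$ at the corresponding corner of $\bar\Omega$ is $\pm (k_l^2-\rho_i^2)(k_l^2-\rho_j^2)S^a_{li}(a)S^b_{lj}(b)$, with signs arranged so that
\begin{equation*}
C_l q^{(ab)}_{ij} = 4(k_l^2-\rho_i^2)(k_l^2-\rho_j^2)\,S^a_{li}(a)\,S^b_{lj}(b)\,\phi^a(k_l x)\phi^b(k_l y).
\end{equation*}
Dividing by $(k_l^2-\rho_i^2)(k_l^2-\rho_j^2)$ and summing produces
\begin{equation*}
\sum_l\sum_{(\rho,\nu)\in E_{\nu_1,z}}\!\frac{C_l q_{(\rho,\nu)}}{(k_l^2-\rho^2)(k_l^2-\nu^2)} = 4\sum_{l=1}^N\sum_{a,b\in\{e,o\}}\!\big(S^a(a)D^{ab}S^b(b)^T\big)_{ll}\,\phi^a(k_l x)\phi^b(k_l y).
\end{equation*}
Each matrix product $S^a(a)D^{ab}S^b(b)^T$ factors through one of the vanishing blocks in the previous display, so it is identically zero; in particular its diagonal vanishes, which is exactly \eqref{Cq}. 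The main obstacle is the sign-summation bookkeeping of the first step; once the $S^e,S^o$ structure emerges, the implication $\eqref{Bq}\Rightarrow\eqref{Cq}$ is a one-line matrix computation.
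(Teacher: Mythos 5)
Your proposal is correct and follows essentially the same route as the paper: expand $q_{E_{\nu_1,z}}$ in the $\cosh/\sinh$ basis, compute $B_l$ and $C_l$ on each parity block, invoke linear independence of the resulting hyperbolic functions (guaranteed by $z\notin\mathcal L$ and the $2(N+1)$ distinct elements of $[\nu_1]_z$) to turn \eqref{Bq} into the matrix identities $S^a(a)D^{ab}=O$ and $S^b(b)(D^{ab})^T=O$, and then observe that the $C$-sum factors through these vanishing products. Your observation that only the diagonal entries $\bigl(S^a(a)D^{ab}S^b(b)^T\bigr)_{ll}$ enter \eqref{Cq} is slightly sharper than the paper's phrasing, but the argument is the same.
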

\begin{proof}
For this proof we drop the dependency on $z$ and $\nu_1$. We do some calculations beforehand using \eqref{BC_B} and \eqref{BC_C}:
\small{
\begin{align*}
B_l\cosh(\rho x)\sinh(\nu y)= &-2e^{-k_l a}\cosh(k_l x)\sinh(\nu y)(k_l \cosh(\rho a)+ \rho \sinh(\rho a))\\
&-2e^{-k_l b}\cosh(\rho x)\sinh(k_l y)(k_l \sinh(\nu b)+ \nu \cosh(\nu b)),\\
C_l\cosh(\rho x)\sinh(\nu y)=  & 4\cosh(k_l x)\sinh(k_l y)\left(k_l \cosh(\rho a)+ \rho \sinh(\rho a))(k_l \sinh(\nu b)+ \nu \cosh(\nu b)\right),
\end{align*}
\begin{align*}
B_l\cosh(\rho x)\cosh(\nu y)= &-2e^{-k_l a}\cosh(k_l x)\cosh(\nu y)(k_l \cosh(\rho a)+ \rho \sinh(\rho a))\\
&-2e^{-k_l b}\cosh(\rho x)\cosh(k_l y)(k_l \cosh(\nu b)+ \nu \sinh(\nu b)),\\
C_l\cosh(\rho x)\cosh(\nu y)=  & 4\cosh(k_l x)\cosh(k_l y)\left(k_l \cosh(\rho a)+ \rho \sinh(\rho a))(k_l \cosh(\nu b)+ \nu \sinh(\nu b)\right),
\end{align*}
\begin{align*}
B_l\sinh(\rho x)\sinh(\nu y)= &-2e^{-k_l a}\sinh(k_l x)\sinh(\nu y)(k_l \sinh(\rho a)+ \rho \cosh(\rho a))\\
&-2e^{-k_l b}\sinh(\rho x)\sinh(k_l y)(k_l \sinh(\nu b)+ \nu \cosh(\nu b)),\\
C_l\sinh(\rho x)\sinh(\nu y)=  & 4\sinh(k_l x)\sinh(k_l y)\left(k_l \sinh(\rho a)+ \rho \cosh(\rho a))(k_l \sinh(\nu b)+ \nu \cosh(\nu b)\right),
\end{align*}
\begin{align*}
B_l\sinh(\rho x)\cosh(\nu y)= &-2e^{-k_l a}\sinh(k_l x)\cosh(\nu y)(k_l \sinh(\rho a)+ \rho \cosh(\rho a))\\
&-2e^{-k_l b}\sin(\rho x)\cosh(k_l y)(k_l \cosh(\nu b)+ \nu \sinh(\nu b)),\\
C_l\sinh(\rho x)\cosh(\nu y)=  & 4\sinh(k_l x)\cosh(k_l y)\left(k_l \sinh(\rho a)+ \rho \cosh(\rho a))(k_l \cosh(\nu b)+ \nu \sinh(\nu b)\right),
\end{align*}
}
\normalsize
hold for $l\in \{1,\dots,N\}$. Now we expand \eqref{Bq} and \eqref{Cq} as 
\small
\allowdisplaybreaks
\begin{align*}
0=&\sum_{l=1}^N \sum_{(\rho,\nu)\in E_{\nu_1,z}} \frac{B_l q_{(\rho,\nu)}}{(k_l^2(z)-\rho^2)(k_l^2(z)-\nu^2)}\\
=& \sum_{l=1}^N \sum_{j=1}^{N+1} 2 e^{-k_l a}\frac{\cosh(k_l x)\cosh(\rho_j y)}{k_l^2-\rho_j^2} \Bigl(\sum_{i=1}^{N+1} d^{ee}_{ij}\Bigl(\frac{k_l\cosh(\rho_i a) + \rho_i \sinh(\rho_i a)}{k_l^2-\rho_i^2}\Bigr)\Bigr)\\
&+\sum_{l=1}^N \sum_{i=1}^{N+1} 2 e^{-k_l b}\frac{\cosh(\rho_i x)\cosh(k_l y)}{k_l^2-\rho_i^2} \Bigl(\sum_{j=1}^{N+1} d^{ee}_{ij}\Bigl(\frac{k_l\cosh(\rho_j b)+\rho_j \sinh(\rho_j b)}{k_l^2-\rho_j^2}\Bigr)\Bigr)\\
& +\sum_{l=1}^N \sum_{j=1}^{N+1} 2 e^{-k_l a}\frac{\cosh(k_l x)\sinh(\rho_j y)}{k_l^2-\rho_j^2} \Bigl(\sum_{i=1}^{N+1} d^{eo}_{ij}\Bigl(\frac{k_l\cosh(\rho_i a)+\rho_i \sinh(\rho_i a)}{k_l^2-\rho_i^2}\Bigr)\Bigr)\\
& +\sum_{l=1}^N \sum_{i=1}^{N+1} 2 e^{-k_l b}\frac{\cosh(\rho_i x)\sinh(k_l y)}{k_l^2-\rho_i^2} \Bigl(\sum_{j=1}^{N+1} d^{eo}_{ij}\Bigl(\frac{k_l\sinh(\rho_j b)+\rho_j \cosh(\rho_j b)}{k_l^2-\rho_j^2}\Bigr)\Bigr)\\
& +\sum_{l=1}^N \sum_{j=1}^{N+1} 2 e^{-k_l a}\frac{\sinh(k_l x)\cosh(\rho_j y)}{k_l^2-\rho_j^2} \Bigl(\sum_{i=1}^{N+1} d^{oe}_{ij}\Bigl(\frac{k_l\sinh(\rho_i a) + \rho_i \cosh(\rho_i a)}{k_l^2-\rho_i^2}\Bigr)\Bigr)\\
& +\sum_{l=1}^N \sum_{i=1}^{N+1} 2 e^{-k_l b}\frac{\sinh(\rho_i x)\cosh(k_l y)}{k_l^2-\rho_i^2} \Bigl(\sum_{j=1}^{N+1} d^{oe}_{ij}\Bigl(\frac{k_l\cosh(\rho_j b)+\rho_j \sinh(\rho_j b)}{k_l^2-\rho_j^2}\Bigr)\Bigr)\\
& +\sum_{l=1}^N \sum_{j=1}^{N+1} 2 e^{-k_l a}\frac{\sinh(k_l x)\sinh(\rho_j y)}{k_l^2-\rho_j^2} \Bigl(\sum_{i=1}^{N+1} d^{oo}_{ij}\Bigl(\frac{k_l\sinh(\rho_i a)+\rho_i \cosh(\rho_i a)}{k_l^2-\rho_i^2}\Bigr)\Bigr)\\
& +\sum_{l=1}^N \sum_{i=1}^{N+1} 2 e^{-k_l b}\frac{\sinh(\rho_i x)\sinh(k_l y)}{k_l^2-\rho_i^2} \Bigl(\sum_{j=1}^{N+1} d^{oo}_{ij}\Bigl(\frac{k_l\sinh(\rho_j b)+\rho_j \cosh(\rho_j b)}{k_l^2-\rho_j^2}\Bigr)\Bigr)\end{align*}
and
\allowdisplaybreaks
\begin{align*}
&\sum_{l=1}^N \sum_{(\rho,\nu)\in E_{\nu_1,z}} \frac{C_l q_{(\rho,\nu)}}{(k_l^2(z)-\rho^2)(k_l^2(z)-\nu^2)}\\
=& \sum_{l=1}^N 4\cosh(k_l x)\cosh(k_l y) \Bigl(\sum_{i,j=1}^{N+1} d^{ee}_{ij}\Bigl(\frac{k_l\cosh(\rho_i a)+\rho_i \sinh(\rho_i a)}{k_l^2-\rho_i^2}\Bigr)\Bigl(\frac{k_l\cosh(\rho_j b)+\rho_j \sinh(\rho_j b)}{k_l^2-\rho_j^2}\Bigr)\Bigr)\\
&+ \sum_{l=1}^N 4 \cosh(k_l x)\sinh(k_l y) \Bigl(\sum_{i,j=1}^{N+1} d^{eo}_{ij}\Bigl(\frac{k_l\cosh(\rho_i a)+\rho_i \sinh(\rho_i a)}{k_l^2-\rho_i^2}\Bigr)\Bigl(\frac{k_l\sinh(\rho_j b)+\rho_j \cosh(\rho_j b)}{k_l^2-\rho_j^2}\Bigr)\Bigr)\\
&+ \sum_{l=1}^N 4 \sinh(k_l x)\cosh(k_l y) \Bigl(\sum_{i,j=1}^{N+1} d^{oe}_{ij}\Bigl(\frac{k_l\sinh(\rho_i a)+\rho_i \cosh(\rho_i a)}{k_l^2-\rho_i^2}\Bigr)\Bigl(\frac{k_l\cosh(\rho_j b)+\rho_j \sinh(\rho_j b)}{k_l^2-\rho_j^2}\Bigr)\Bigr)\\
&+ \sum_{l=1}^N 4 \sinh(k_l x)\sinh(k_l y) \Bigl(\sum_{i,j=1}^{N+1} d^{oo}_{ij}\Bigl(\frac{k_l\sinh(\rho_i a)+\rho_i \cosh(\rho_i a)}{k_l^2-\rho_i^2}\Bigr)\Bigl(\frac{k_l\sinh(\rho_j b)+\rho_j \cosh(\rho_j b)}{k_l^2-\rho_j^2}\Bigr)\Bigr).
\end{align*}
\normalsize
Since $z \notin \mathcal{L}$, we have by Proposition \ref{exclusion_set_L} that $\rho_i \neq \pm k_l$ and $k_p \neq \pm k_l$ for $i\in \{1,\dots, N+1\}$ and $p,l \in \{1,\dots, N\}$, where $p\neq l$. Since $[\nu]_z$ has $2(N+1)$ elements we have by Proposition~\ref{equiv_class_size} that $\rho_i \neq \pm \rho_j \neq 0$ for $i,j \in \{1,\dots, N+1\}$, where $i\neq j$. Hence all the terms above of cosine hyperbolic and sine hyperbolic in $x$ and $y$ are linearly independent and hence all the sums on the right have to be zero to satisfy the conditions. 

Using the matrices defined in \eqref{S_matrices}, the necessary and sufficient conditions for which \eqref{Bq} holds are
\begin{equation}\label{matrix_equations}
\begin{split}
S^{e}(a)D^{ee}&=S^{e}(b)(D^{ee})^{T}=S^{e}(a)D^{eo}=S^{o}(b)(D^{eo})^{T}=O\\
S^{o}(a)D^{oe}&=S^{e}(b)(D^{oe})^{T}=S^{o}(a)D^{oo}=S^{o}(b)(D^{oo})^{T}=O,
\end{split}
\end{equation}
where $O$ is the $N\times (N+1)$-zero matrix. The necessary and sufficient conditions for which \eqref{Cq} holds are
\begin{equation}
\begin{split}
S^{e}(a)D^{ee}(S^{e}(b))^{T}&=S^{e}(a)D^{eo}(S^{o}(b))^{T}=O\\
S^{o}(a)D^{oe}(S^{e}(b))^{T}&=S^{o}(a)D^{oo}(S^{o}(b))^{T}=O,
\end{split}
\end{equation}
where $O$ is the $N\times N$-zero matrix. Hence we conclude that \eqref{Bq} implies \eqref{Cq}.
\end{proof}

\begin{cor}\label{cor:final}
Let $z \notin \mathcal{L}, \nu_1 \in \mathbb{C}$ and suppose that the equivalence class $[\nu_1]_z$ has $2(N+1)$ distinct elements. Then $\Delta(z)q_{E_{\nu_1,z}}=0$ if and only if
\begin{equation}
\sum_{l=1}^N \sum_{(\rho,\nu)\in E_{\nu_1,z}} \frac{B_l(z) q_{(\rho,\nu)}}{(k_l^2(z)-\rho^2)(k_l^2(z)-\nu^2)}=0.
\end{equation}
\end{cor}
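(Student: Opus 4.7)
The plan is to combine Theorem~\ref{equiv_thm} with the preceding proposition and a linear-independence argument. Applying Theorem~\ref{equiv_thm} to $V = E_{\nu_1,z}$ and substituting \eqref{KL_eq} transforms the condition $\Delta(z)q_{E_{\nu_1,z}}=0$ into
\[
\sum_{l=1}^N\sum_{(\rho,\nu)\in E_{\nu_1,z}} \frac{-2 c_l(z)k_l(z)\, B_l(z)q_{(\rho,\nu)} + c_l(z) e^{-k_l(z)(a+b)}\, C_l(z)q_{(\rho,\nu)}}{(k_l^2(z)-\rho^2)(k_l^2(z)-\nu^2)} = 0.
\]
The task is then to show that this single equation is equivalent to the $B$-sum of the corollary vanishing on its own.

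The $(\Leftarrow)$ direction is immediate: if the $B$-sum vanishes, then by the previous proposition the $C$-sum vanishes as well, so the combined bracket is zero and Theorem~\ref{equiv_thm} yields $\Delta(z)q_{E_{\nu_1,z}}=0$.

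The $(\Rightarrow)$ direction rests on a linear-independence observation already contained in the proof of the previous proposition. Each $B_l(z)q_{(\rho,\nu)}$ expands into terms of the shape $\{\cosh,\sinh\}(k_l x)\cdot\{\cosh,\sinh\}(\rho_j y)$ or $\{\cosh,\sinh\}(\rho_i x)\cdot\{\cosh,\sinh\}(k_l y)$, i.e.\ with exactly one factor carrying a $k_l$ argument, whereas each $C_l(z)q_{(\rho,\nu)}$ produces only terms $\{\cosh,\sinh\}(k_l x)\cdot\{\cosh,\sinh\}(k_l y)$ in which both factors carry $k_l$. Under the hypotheses $z\notin\mathcal{L}$ and $|[\nu_1]_z|=2(N+1)$, Propositions~\ref{exclusion_set_L} and~\ref{equiv_class_size} guarantee $\rho_j\neq\pm k_l$ for all admissible $j,l$, $k_p\neq\pm k_l$ for $p\neq l$, and the $\rho_j$ distinct and nonzero. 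Hence the set of hyperbolic products generated by the $B$-terms is linearly independent from the set generated by the $C$-terms, so vanishing of the full bracket forces the $B$-part and the $C$-part to vanish separately; the nonzero scalar weights $c_l(z)k_l(z)$ and $c_l(z)e^{-k_l(z)(a+b)}$ drop out, leaving the $B$-sum displayed in the corollary equal to zero. The main technical obstacle is this linear-independence verification, but it is essentially bookkeeping that reuses the exhaustive coefficient isolation already performed in the proof of the preceding proposition to arrive at the matrix equations~\eqref{matrix_equations}.
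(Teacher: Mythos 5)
Your argument is correct and matches the route the paper intends: the corollary is stated without proof precisely because it is the assembly of Theorem~\ref{equiv_thm}, the decomposition \eqref{KL_eq}, the preceding proposition (the $B$-sum forces the $C$-sum to vanish), and the linear-independence/coefficient-isolation argument already carried out to derive the matrix equations \eqref{matrix_equations}. Your remark that the $l$-dependent weights $-2c_l(z)k_l(z)$ and $c_l(z)e^{-k_l(z)(a+b)}$ can be divided out is justified exactly because that linear independence forces the coefficients to vanish separately for each $l$, so the weighted and unweighted sums vanish simultaneously.
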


We can conclude that finding eigenvectors comes down to finding non-trivial solutions to the matrix equations \eqref{matrix_equations}. The problem is that for $D^{ee}$ to be non-zero, there are $2N(N+1)$ equations and $N(N+1)$ unknowns in $D^{ee}$ and $\nu_1$ and $z$, in total $N(N+1)+2$ unknowns. When $N=1$ then the number of equations and unknowns are the same. This special case will be treated separately in the next section. When $N\geq 2$ then the number of conditions is larger than the number of unknowns. However, we can reduce the amount of equations to some extent. To illustrate this, we consider $N=2$ and distinguish two cases: $\rank(D^{ee})=1$ and $\rank(D^{ee})\geq 2$.

When $\rank(D^{ee})=1$, it means that we can write $D^{ee}=d^a (d^b)^T$, where $d^a,d^b \in \mathbb{C}^3$ are non-trivial solutions of $S^{e}(a)d^a=0, S^{e}(b)d^b=0$. To satisfy the condition that $D^{ee}$ has a zero diagonal, one of $d^a$ or $d^b$ has to have two zero elements. Without loss of generality, suppose that $d^a= (1\; 0\; 0)^T$. Then $S^{e}(a)d^a=0$ implies that $S^{e}_{11}(a)=0$ and $S^{e}_{21}(a)=0$, or equivalently,
\begin{equation}
\begin{split}
&k_1(z) \cosh(\rho_1 a) + \rho_1 \sinh(\rho_1 a)=0,\\
&k_2(z) \cosh(\rho_1 a) + \rho_1 \sinh(\rho_1 a)=0.
\end{split}\label{eq:boundary_N2}
\end{equation}
Subtracting these equations and using that $k_1(z)\neq k_2(z)$, we get that $\cosh(\rho_1 a)=0$. Substituting this back into \eqref{eq:boundary_N2} implies that $\rho_1 \sinh(\rho_1 a)=0$, which is not possible simultaneously with $\cosh(\rho_1 a)=0$. Hence $\rank(D^{ee})$ must be at least 2.

Due to the Sylvester's rank inequality, $\rank(S^{e}(a))+\rank(D^{ee})\leq 3$ and $\rank(S^{e}(b))+\rank(D^{ee})\leq 3$. Hence $\rank(D^{ee})\geq 2$ implies that $\rank(S^{e}(a)),\rank(S^{e}(b))\leq 1$. In this case it is then possible to construct an explicit solution for $D^{ee}$, so the condition $\rank(S^{e}(a))$, $\rank(S^{e}(b))\leq 1$ is also sufficient.

Requiring that $\rank(S^{e}(a))=1$, i.e., the two rows of $S^{e}(a)$ are linearly dependent, gives the following conditions
\begin{align*}
\eta_1 S^{e}_{11}(a) + \eta_2 S^{e}_{12}(a) = 0\\
\eta_1 S^{e}_{21}(a) + \eta_2 S^{e}_{22}(a) = 0\\
\eta_1 S^{e}_{31}(a) + \eta_2 S^{e}_{32}(a) = 0\\
\eta_1^2+\eta_2^2 = 1.
\end{align*}
Under the same assumption on $S^{e}(b)$, for each of the matrices $S^{e}(a),S^{e}(b)$ there are 4 conditions and we have in total 6 unknowns, $\nu_1,z$ and the $\eta_1,\eta_2$ constants for each matrix. Hence this system is overdetermined and therefore has no generic solution.

However, when $a=b$ the matrices $S^{e}(a)$ and $S^{e}(b)$ are the same and this means 4 conditions and 4 unknowns. Therefore, with a Newton method we can find $\nu_1,z$ such that $\rank(S^{e}(a))=1$ and from that we can explicitly find a non-trivial solution $D^{ee}$ of the form
\[D^{ee}=\begin{pmatrix}
0 & - S^e_{13} & S^e_{12}\\
S^e_{13} & 0 & -S^e_{11}\\
-S^e_{12} & S^e_{11} & 0
\end{pmatrix}.\]
This works similarly for $\rank(S^{o}(a))=1$ and $D^{oo}$. Obtaining however a non-trivial $D^{eo}$ or $D^{oe}$ still requires 8 conditions to be satisfied and it is thus non-generic.

In conclusion, there are only generic eigenvectors which are a (finite) sum of exponentials if $a=b$. 
%One thing left to try is to look at solutions of the form $x e^{\rho x}e^{\nu y}$, corresponding to a double root in the polynomial (most notably when $\rho=0$). However I doubt that this will really make a difference as this requires fixing the $\nu_1$ such that $P_z(\rho,\nu_1)$ has a double root. Hence we have only one degree of freedom left $z$.
Otherwise, generic eigenvectors for $a\neq b$ (and maybe also some for $a=b$) are solutions to \eqref{char_eq} which are not a sum of exponentials. In general, there is no guarantee that solutions of these integral equations can be expressed analytically. 
%but I'm not sure we can (analytically) find such solutions. And suppose we could, finding some boundary condition will also be hard, except for substituting found solutions back into the integral equation.
%%%%%%%%%%%%%%%%%%%%
\section{Single exponential connectivity}\label{sec:single_exp}
In this section, we consider the case when the connectivity kernel in \eqref{connectivity} is a single exponential, i.e., $N=1$. In contrast to $N\geq 2$, in this case we can find a complete characterisation of the spectrum. Using the results of Section~\ref{section spectrum}, we formulate the boundary value problem and give an analytic representation of the solution. After having described the spectrum of the linearized neural field equation, we solve the resolvent problem for this special case and using these results, we give an example of a Hopf bifurcation in the next section. For notational simplicity, we drop the subscripts of the operators. 

As shown in Section~\ref{section spectrum}, we can transform the integral equation \eqref{char_eq} into a PDE using the fact that $L(z)K(z)q = 4 c(z) k^2(z) q$. In the next theorem we state that, for $N=1$ the characteristic integral equation of the DDE is equivalent to a PDE with an additional (boundary) condition.
\begin{thm}\label{thm:equivalence}
Let $q\in Y$ such that $q_{xxyy}=q_{yyxx}\in C(\Omega)$ and let $z\in \mathbb{C}$ such that $z\neq -\alpha$ and $k(z)\neq 0$. Then we have the following equivalence
\begin{equation}\label{eq:equiv_N1}
\Delta(z) q=0  \Leftrightarrow \left\{L(z)\Delta(z)q=0 \text{ and } K(z)L(z)q=L(z)K(z)q \right\}.
\end{equation}
Moreover, for any $g\in Y$ we get that 
\begin{equation}\label{eq:equiv_N2}
\Delta(z) q = K g  \Leftrightarrow \left\{L(z)\Delta(z)q= 4 c(z)k^2(z) g \text{ and } K(z)L(z)q=L(z)K(z)q \right\}.
\end{equation}
\end{thm}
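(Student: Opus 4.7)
The plan is to prove the resolvent form \eqref{eq:equiv_N2} directly, from which the eigenvalue equivalence \eqref{eq:equiv_N1} follows by setting $g=0$. The whole argument rests on the identity $L(z)K(z)q = 4c(z)k^2(z)q$ from \eqref{key_property}, which holds for every $q\in Y$; the smoothness hypothesis $q_{xxyy}=q_{yyxx}\in C(\Omega)$ is used only to ensure that $L(z)q$ is a well-defined continuous function, so that the composition $K(z)L(z)q$ makes sense.

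For the forward direction, I would rewrite $\Delta(z)q=K(z)g$ as $(z+\alpha)q=K(z)(q+g)$ and apply $L(z)$ to both sides. Using \eqref{key_property} on the right gives $(z+\alpha)L(z)q=4c(z)k^2(z)(q+g)$, and since $z\neq -\alpha$ this determines $L(z)q$ explicitly. Substituting into $L(z)\Delta(z)q=(z+\alpha)L(z)q-L(z)K(z)q$ and applying \eqref{key_property} once more, the $q$-terms cancel and leave $L(z)\Delta(z)q=4c(z)k^2(z)g$. The commutation condition follows by applying $K(z)$ to the explicit expression for $L(z)q$ and invoking $K(z)(q+g)=(z+\alpha)q$ to recognise the result as $4c(z)k^2(z)q$, which by \eqref{key_property} is exactly $L(z)K(z)q$.

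For the reverse direction, start from $L(z)\Delta(z)q=4c(z)k^2(z)g$, expand as $(z+\alpha)L(z)q - L(z)K(z)q = 4c(z)k^2(z)g$, and use \eqref{key_property} on the second term to obtain again $(z+\alpha)L(z)q=4c(z)k^2(z)(q+g)$, so $L(z)q$ has the same explicit form as before. Apply $K(z)$ to both sides. The left-hand side is $K(z)L(z)q$, which by the commutation hypothesis equals $L(z)K(z)q=4c(z)k^2(z)q$ via \eqref{key_property}. Cancelling the nonzero factor $4c(z)k^2(z)/(z+\alpha)$ (nonzero because $c(z)\neq 0$, $k(z)\neq 0$ and $z\neq -\alpha$) gives $(z+\alpha)q=K(z)(q+g)$, which is precisely $\Delta(z)q=K(z)g$.

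The argument is essentially algebraic once \eqref{key_property} is in hand, so I do not foresee a substantive obstacle. The subtlety worth emphasising is the asymmetry between $L(z)K(z)$ and $K(z)L(z)$: the former always equals $4c(z)k^2(z)I$ on $Y$, since it arises from distributional differentiation of the exponential kernel, whereas the latter does not, because the differential operator $L(z)$ has no reason a priori to commute with the integral operator $K(z)$ on a general smooth $q$. This asymmetry is exactly what makes the commutation condition $K(z)L(z)q=L(z)K(z)q$ a genuine extra constraint, and explains why it will play the role of the boundary condition attached to the PDE in the subsequent analysis.
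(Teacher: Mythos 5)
Your proposal is correct and follows essentially the same route as the paper: both arguments apply $L(z)$ and then $K(z)$ to the equation, use the key identity $L(z)K(z)=4c(z)k^2(z)I$ on $Y$, and exploit $z\neq-\alpha$, $c(z)\neq 0$, $k(z)\neq 0$ to cancel the resulting scalar factors; the paper merely packages the computation as identities for $L(\Delta q-Kg)$ and $KL(\Delta q-Kg)$ rather than solving for $L(z)q$ explicitly. Your closing remark on the asymmetry between $L(z)K(z)$ and $K(z)L(z)$ correctly identifies why the commutation condition is the genuine boundary-condition content of the equivalence.
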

\begin{proof} Let $q,g\in Y$ such that $q_{xxyy}=q_{yyxx}\in C(\Omega)$ and let $z\in \mathbb{C}\setminus\{-\alpha\}$, such that $k(z)\neq 0$. The smoothness condition on $q$ implies that 
\begin{equation}
L(z)q = \left(k^2(z) - \frac{\partial^2}{\partial x^2} \right)\circ \left(k^2(z) -\frac{\partial^2}{\partial y^2} \right)q
\end{equation}
is well defined. Using $g\equiv 0$ in \eqref{eq:equiv_N2} gives \eqref{eq:equiv_N1}, so we only prove \eqref{eq:equiv_N2}. For the remaining part of this proof we will drop the dependency on $z$ for clarity.

First we apply the operator $L$ and $KL$ to $(\Delta q - Kg)$
\begin{align*}
L(\Delta q - Kg) &= (z+\alpha)Lq - LKq - LKg\\
&=  (z+\alpha)Lq - 4ck^2 q - 4ck^2 g,\\
KL(\Delta q - Kg) &= (z+\alpha)KL q - 4ck^2 Kq - 4ck^2 Kg\\
&= (z+\alpha)KL q + 4ck^2 \Delta q  - 4ck^2(z+\alpha) q - 4ck^2 Kg\\
&= (z+\alpha)(KL-LK)q + 4ck^2(\Delta q - Kg).
\end{align*}
Suppose that $\Delta q= Kg$. Then from the equations above we get that 
\[L\Delta q =  (z+\alpha)Lq - 4ck^2 q = 4ck^2 g
\]
and that $(KL-LK)q=0$. 

Conversely, suppose that $L\Delta q = 4ck^2 g$ and that $(KL-LK)q=0$. Then 
\[0=KL(\Delta q - Kg) = 4ck^2(\Delta q - Kg),\] 
and hence $\Delta q = Kg$ as $k\neq 0$.
\end{proof}

Note that, for the set of $q$ where $K(z)L(z)q=L(z)K(z)q= 4c(z) k^2(z) q$ holds, $K(z)$ has a two-sided inverse $\tfrac{1}{4c(z) k^2(z)} L(z)$. Using \eqref{KL_eq}, we can write $(K(z)L(z)-L(z)K(z))q$ in terms of derivatives of $q$ at the boundary of $\Omega$. In the next lemma we can see that, under some conditions, we can interpret the right-hand side of \eqref{eq:equiv_N1} as a boundary value problem with a Robin-type boundary condition.

\begin{lem}\label{normalBC}
Let $q\in Y$ such that $q_{xxyy}=q_{yyxx}\in C(\Omega)$. Then 
\begin{equation}\label{BC}
\left(k(z)+ \frac{\partial}{\partial n}\right)q(x,y)=0 \quad \forall (x,y)\in\partial\Omega
\end{equation}
implies that $(K(z)L(z)-L(z)K(z))q=0$, where $\frac{\partial}{\partial n}$ is the outward normal derivative to the boundary of $\Omega$. 

If $k(z)\neq 0$ and $q(x,y)=\phi(x)\psi(y)$, where $\phi\in C^2([-a,a])$, $\psi\in C^2([-b,b])$ and $\phi(x) \neq \bar{c} e^{\pm k(z) x}$, $\psi(y) \neq \bar{c} e^{\pm k(z) y}$ for all $\bar{c}\in \mathbb{C}$, then $(K(z)L(z)-L(z)K(z))q=0$ also implies \eqref{BC}.
\end{lem}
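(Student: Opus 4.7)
The plan is to extract both implications from the explicit formula \eqref{KL_eq} for $(K(z)L(z)-L(z)K(z))q$ in terms of the boundary operators $B(z)$ and $C(z)$ from \eqref{BC_B}, \eqref{BC_C}. For the forward implication, every summand of $(B(z)q)(x,y)$ is an exponential times a boundary value of the form $(k(z)\mp\partial_x)q$ at $x=\mp a$ or $(k(z)\mp\partial_y)q$ at $y=\mp b$; these four expressions coincide with $(k(z)+\partial_n)q$ on the four faces of $\partial\Omega$, so \eqref{BC} immediately forces $B(z)q\equiv 0$. The summands of $(C(z)q)(x,y)$ are exponentials times \emph{corner} values of the mixed operators $(k(z)\pm\partial_x)(k(z)\pm\partial_y)q$; since \eqref{BC} holds as an identity in the tangential variable along each edge, differentiating it tangentially (and commuting partials via the mixed-smoothness hypothesis on $q$) kills every such corner value, so $C(z)q\equiv 0$ as well, and \eqref{KL_eq} delivers $(K(z)L(z)-L(z)K(z))q=0$.

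For the converse, write $q=\phi(x)\psi(y)$, drop the $z$-dependence, and abbreviate
\[
A_1=(k\phi-\phi')(-a),\ A_2=(k\phi+\phi')(a),\ B_1=(k\psi-\psi')(-b),\ B_2=(k\psi+\psi')(b),
\]
together with $V(x):=A_1e^{-k(a+x)}+A_2e^{-k(a-x)}$ and $U(y):=B_1e^{-k(b+y)}+B_2e^{-k(b-y)}$. A direct substitution into \eqref{BC_B}--\eqref{BC_C} gives $B(z)q=V(x)\psi(y)+U(y)\phi(x)$ and $e^{-k(a+b)}C(z)q=V(x)U(y)$, and Robin BC \eqref{BC} specialised to $q=\phi\psi$ is equivalent to $V\equiv 0$ and $U\equiv 0$. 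Dividing \eqref{KL_eq} by $c(z)\neq 0$, the assumption $(K(z)L(z)-L(z)K(z))q=0$ then rearranges to
\[
V(x)\bigl[U(y)-2k\psi(y)\bigr]=2k\,U(y)\phi(x),\qquad(x,y)\in\bar\Omega.
\]

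The remaining step is a short dichotomy, made possible by reading the no-pure-exponential hypothesis as also excluding $\phi\equiv 0$ and $\psi\equiv 0$ (the case $\bar c=0$). Suppose first $U\equiv 0$; then $V(x)\psi(y)=0$ and $\psi\not\equiv 0$ force $V\equiv 0$. Suppose instead $U\not\equiv 0$ and pick $y_0$ with $U(y_0)\neq 0$; evaluating at $y_0$ yields $V=C\phi$ for some constant $C$, and because $V\in\mathrm{span}\{e^{kx},e^{-kx}\}$ either $C=0$ (previous case) or $\phi\in\mathrm{span}\{e^{kx},e^{-kx}\}$. For $\phi=\alpha e^{kx}+\beta e^{-kx}$ a short calculation gives $A_1=2k\beta e^{ka}$, $A_2=2k\alpha e^{ka}$ and hence $V=2k\phi$, so $C=2k$; feeding this back into the identity collapses it to $-2k\phi(x)\psi(y)=0$, forcing $\psi\equiv 0$ and contradicting the hypothesis on $\psi$. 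Hence $V\equiv U\equiv 0$, i.e.\ $A_1=A_2=B_1=B_2=0$, which is exactly \eqref{BC}. The main obstacle is precisely this last case: one must both pin $\phi$ down to the two-dimensional exponential kernel of $k^2-\partial_x^2$ and identify the multiplier as $2k$, so that the hypothesis on $\psi$ (rather than $\phi$) can be invoked to close the loop.
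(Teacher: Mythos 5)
Your proof is correct, and while it shares its skeleton with the paper's argument --- both directions hinge on the decomposition \eqref{KL_eq} of $(K(z)L(z)-L(z)K(z))q$ into the boundary operators $B(z)$ and $C(z)$, and your forward direction (edge terms of $Bq$ are exactly $(k+\partial_n)q$ on the faces, corner terms of $Cq$ die after tangential differentiation of the edge identities) is the same as the paper's --- your converse takes a genuinely different and in fact tighter route. The paper writes out $Bq$ and $Cq$ for separable $q$ and concludes ``by linear independence that each term of $B(z)q$ should vanish''; that independence argument tacitly requires $\phi\notin\spn\{e^{k(z)x},e^{-k(z)x}\}$, which is strictly stronger than the stated hypothesis (the hypothesis only excludes the pure exponentials $\bar c e^{\pm k(z)x}$, not a combination $\alpha e^{k(z)x}+\beta e^{-k(z)x}$ with $\alpha\beta\neq 0$, and for such a $\phi$ the functions $\phi(x)e^{\pm k(z)y}$ lie in the span of the $Cq$ terms, so cancellation is a priori possible). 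Your factorization $Bq=V\psi+U\phi$, $e^{-k(a+b)}Cq=VU$ and the resulting identity $V(U-2k\psi)=2kU\phi$ close exactly this gap: the dichotomy on $U$ reduces the troublesome case to $\phi\in\spn\{e^{kx},e^{-kx}\}$, where your computation $A_1=2k\beta e^{ka}$, $A_2=2k\alpha e^{ka}$, hence $V=2k\phi$, collapses the identity to $-4k^2\phi(x)\psi(y)=0$ (you wrote $-2k\phi\psi$; the constant is off but immaterial) and contradicts $\psi\not\equiv 0$. Two small caveats: you correctly flag that you read the hypothesis with $\bar c=0$ included, so that $\phi,\psi\not\equiv 0$ --- the paper needs the same reading --- and your proof, like the paper's, uses $c(z)\neq 0$ when dividing \eqref{KL_eq}, which holds by the standing assumption $c_i(z)\neq 0$ after \eqref{integral_operator}. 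Net effect: your argument proves the lemma under its literal hypotheses, whereas the paper's one-line independence appeal leaves the case $\alpha\beta\neq 0$ unaddressed (it is vacuous there, as your argument shows, but that requires the extra step you supplied).
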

\begin{proof} From \eqref{KL_eq},we can write $K(z)L(z)-L(z)K(z)$ in terms of the operators $B(z)$ and $C(z)$ as
\begin{equation}\label{BCequiv}
    (K(z)L(z)-L(z)K(z))q=-2c(z)k(z)B(z)q+c(z) e^{-k(z)(a+b)}C(z)q=0.
\end{equation}
The first statement then immediately follows from the definition of $B(z)$ and $C(z)$ in \eqref{BC_B} and \eqref{BC_C}, respectively. 

Conversely, assume that $q(x,y)=\phi(x)\psi(y)$, with $\phi$ and $\psi$ as in the second statement of the lemma. Then we can write $B(z)q$ and $C(z)q$ as
\begin{align*}
(B(z)q)(x,y)=& \psi(y)e^{-k(z)(a+x)}\left(\left( k(z)-\ppf{}{x} \right)\phi\right)(-a)\\
&+\psi(y)e^{-k(z)(a-x)}\left(\left( k(z)+\ppf{}{x} \right)\phi\right)(a)\\
&+\phi(x)e^{-k(z)(b+y)}\left(\left( k(z)-\ppf{}{y} \right)\psi\right)(-b)\\
&+\phi(x)e^{-k(z)(b-y)}\left(\left( k(z)+\ppf{}{y} \right)\psi\right)(b)
\end{align*}
and
\begin{align*}
(C(z)q)(x,y)=&e^{-k(z)x}\left(\left( k(z)-\ppf{}{x}\right)\phi \right)(-a)\; e^{-k(z)(y)}\left(\left( k(z)-\ppf{}{y} \right)\psi\right)(-b)\\
&+e^{-k(z)x}\left(\left( k(z)-\ppf{}{x}\right)\phi \right)(-a)\; e^{k(z)y}\left(\left( k(z)+\ppf{}{y} \right)\psi\right)(b)\\
&+e^{k(z)x}\left(\left( k(z)+\ppf{}{x}\right)\phi \right)(a)\; e^{-k(z)y}\left(\left( k(z)-\ppf{}{y} \right)\psi\right)(-b)\\
&+e^{k(z)x}\left(\left( k(z)+\ppf{}{x}\right)\phi \right)(a)\; e^{k(z)y}\left(\left( k(z)+\ppf{}{y} \right)\psi\right)(b).
\end{align*}
Using the fact that $k(z)\neq 0$ and $\phi(x) \neq \bar{c} e^{\pm k(z) x}, \psi(y) \neq \bar{c} e^{\pm k(z) y}$ for all $\bar{c}\in \mathbb{C}$, we can reason by linear independence that each term of $B(z)q$ should vanish and hence \eqref{BC} holds.
\end{proof}
%%%%%%%%%%%%%%%%%%%%%%%%%%%%%%%%%%

We can rewrite the PDE $L(z)\Delta(z) q = 0$ as
\begin{equation}\label{PDE_N1}
L(z)q= \frac{4c(z)k^2(z)}{z+\alpha} q.
\end{equation}
So $z$ is an eigenvalue of the original DDE, when $\frac{4c(z)k^2(z)}{z+\alpha}$ is an eigenvalue of $L(z)$ with a domain of functions $q$ which satisfy the boundary condition $K(z)L(z)q=L(z)K(z)q$.

\subsection{Eigenvalues and eigenvectors}
We can use Corollary \ref{cor:final} and the matrix equations of \eqref{matrix_equations} to find some eigenvalues with eigenvectors which are a sum of exponentials. But first we take a look at the set of resonances $\mathcal{L}$.

From the definition of $\mathcal{L}$ in Proposition \ref{exclusion_set_L} we see that $z\in \mathcal{L}$ reduces to $k(z)=0$ for $N=1$. When $k(z)=0$, i.e. $z=-\xi$, any solution $q$ to $\Delta(z) q=0$ is constant, as
\[(z+\alpha)q(r) = c(z)\int_\Omega q(r') dr'.\]
Hence $z=-\xi$ is an eigenvalue if and only if 
\[\xi-\alpha + 4ab\, c(-\xi)=0.\]

We can now characterize the eigenvalues $z$, i.e., those $z$ values for which $\Delta(z)q=0$ has a non-trivial solution.
\begin{thm}\label{thm:eigenvectors}
Let $z\in \mathbb{C}\setminus \{-\alpha\}$ such that $k(z)\neq 0$ and let $\nu,\rho \in\mathbb{C}$ such that $P_z(\rho,\nu)= 0$ with $\rho,\nu\neq 0$, where
\begin{equation}\label{char_pol_N1}
P_z(\rho, \nu)=-(z+\alpha)(k^2(z)-\rho^2)(k^2(z)-\nu^2)+4c(z) k^2(z).
\end{equation}

If $k(z) \cosh(\rho a) + \rho \sinh(\rho a) = k(z) \cosh(\nu b) + \rho \sinh(\nu b) =0$, then $z$ is an eigenvalue with the eigenvector $q(x,y) = \cosh(\rho x)\cosh(\nu y)$.

If $k(z) \sinh(\rho a) + \rho \cosh(\rho a) = k(z) \cosh(\nu b) + \rho \sinh(\nu b) =0$, then $z$ is an eigenvalue with the eigenvector $q(x,y) = \sinh(\rho x)\cosh(\nu y)$.

If $k(z) \cosh(\rho a) + \rho \sinh(\rho a) = k(z) \sinh(\nu b) + \rho \cosh(\nu b) =0$, then $z$ is an eigenvalue with the eigenvector $q(x,y) = \cosh(\rho x)\sinh(\nu y)$.

If $k(z) \sinh(\rho a) + \rho \cosh(\rho a) = k(z) \sinh(\nu b) + \rho \cosh(\nu b) =0$, then $z$ is an eigenvalue with the eigenvector $q(x,y) = \sinh(\rho x)\sinh(\nu y)$.
\end{thm}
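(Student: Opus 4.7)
The plan is to apply Theorem \ref{thm:equivalence}, which, given the standing assumptions $z \neq -\alpha$ and $k(z) \neq 0$, reduces $\Delta(z) q = 0$ to the pair of conditions $L(z)\Delta(z) q = 0$ and $K(z)L(z) q = L(z)K(z) q$. My strategy is to verify these two conditions for each of the four separated ansatz functions in turn, with the first part being a direct constant-coefficient eigenfunction computation and the second part a reduction to a Robin-type boundary condition via Lemma \ref{normalBC}.

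For the PDE part, I will exploit the fact that $L(z) = (k^2(z) - \partial_x^2)(k^2(z) - \partial_y^2)$ has constant coefficients, so any separated product of $\cosh$ and $\sinh$ in $\rho x$ and $\nu y$ is a joint eigenfunction with eigenvalue $(k^2(z) - \rho^2)(k^2(z) - \nu^2)$. The hypothesis $P_z(\rho,\nu) = 0$, with $P_z$ as in \eqref{char_pol_N1}, rearranges to
\[(z+\alpha)(k^2(z) - \rho^2)(k^2(z) - \nu^2) = 4 c(z) k^2(z),\]
so this eigenvalue coincides with $\tfrac{4 c(z) k^2(z)}{z+\alpha}$, which is precisely \eqref{PDE_N1}, i.e., $L(z)\Delta(z) q = 0$.

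For the operator identity, I invoke Lemma \ref{normalBC}, by which it suffices to verify the Robin condition $(k(z) + \partial/\partial n) q = 0$ on each edge of $\partial\Omega$. Since each ansatz is separable as $\phi(x)\psi(y)$, this condition decouples into two univariate Robin conditions: one on $\phi$ at $x = \pm a$, the other on $\psi$ at $y = \pm b$. The parity of $\cosh$ and $\sinh$ then forces the two $x$-edges to yield a single scalar equality in $\rho$ and $k(z)$, and the two $y$-edges a single scalar equality in $\nu$ and $k(z)$. Depending on whether $\phi$ is $\cosh$ or $\sinh$ (and similarly for $\psi$), these equalities take the specific forms $k(z)\cosh(\rho a) + \rho\sinh(\rho a) = 0$ or $k(z)\sinh(\rho a) + \rho\cosh(\rho a) = 0$, and analogously in $\nu, b$, matching exactly the hypothesis attached to each of the four cases.

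Nontriviality of $q$ is immediate from $\rho, \nu \neq 0$, which rules out identically-vanishing $\sinh$ factors, and the separable smoothness of the ansatz trivially satisfies the mixed-derivative hypothesis of Theorem \ref{thm:equivalence}. No serious obstacle is anticipated; I only need the forward implication of Lemma \ref{normalBC} (not the converse, which is why the exclusion of $\bar{c}\,e^{\pm k(z) x}$ factors plays no role here), and the principal bookkeeping is the parity tracking that collapses four edge conditions into two scalar equalities, together with the correct pairing of $\cosh/\sinh$ factors with the corresponding scalar equality in each of the four cases.
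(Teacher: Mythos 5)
Your proof is correct, but it takes a different route from the paper's. The paper proves this theorem by invoking Corollary~\ref{cor:final} and the matrix equations \eqref{matrix_equations} from the general-$N$ machinery of Section~\ref{section spectrum}: it sets $D^{eo}=D^{oe}=D^{oo}=O$, observes that for $N=1$ the $2\times 2$ matrix $D^{ee}$ has zero diagonal so the equations $S^e(a)D^{ee}=S^e(b)(D^{ee})^T=O$ decouple, and reads off the scalar conditions $k(z)\cosh(\rho a)+\rho\sinh(\rho a)=0$, $k(z)\cosh(\nu b)+\nu\sinh(\nu b)=0$ directly from the entries of $S^e$; the other three cases come from choosing a different one of the four $D$-matrices to be nonzero. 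You instead verify the two conditions of Theorem~\ref{thm:equivalence} directly: the constant-coefficient eigenfunction computation $L(z)q=(k^2(z)-\rho^2)(k^2(z)-\nu^2)q$ together with $P_z(\rho,\nu)=0$ gives $L(z)\Delta(z)q=0$, and the forward implication of Lemma~\ref{normalBC} reduces $(K(z)L(z)-L(z)K(z))q=0$ to the Robin condition, which by parity collapses to exactly the two scalar equalities in each case. Your route is more self-contained and elementary --- it bypasses the equivalence classes, the partition principle, and the $B\Rightarrow C$ proposition entirely, and it also avoids the implicit hypothesis of Corollary~\ref{cor:final} that the equivalence class $[\nu]_z$ has $2(N+1)$ distinct elements (i.e.\ $\rho^2\neq\nu^2$), which the theorem statement does not actually assume. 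The paper itself gestures at your argument in the remark immediately following the theorem ("inserting $q(x,y)=\cosh(\rho x)\cosh(\nu y)$ into the right-hand side of \eqref{eq:equiv_N1} gives exactly\dots"), but its formal proof is the matrix one. What the paper's approach buys is uniformity with the general-$N$ analysis and an explanation of \emph{why} these are the only sum-of-exponentials eigenvectors; what yours buys is a short, direct verification. Note also that the displayed conditions in the theorem statement contain a typo ($\rho\sinh(\nu b)$ and $\rho\cosh(\nu b)$ should read $\nu\sinh(\nu b)$ and $\nu\cosh(\nu b)$); your derivation produces the corrected form, consistent with the paper's own proof.
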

\begin{proof}
We can directly apply Corollary \ref{cor:final}. More specifically the matrix equations \eqref{matrix_equations} hold. Without loss of generality, we set $D^{eo},D^{oe},D^{oo}=O$, leaving only $D^{ee}$ as a variable satisfying $S^e(a)D^{ee}=S^e(b)(D^{ee})^T=O$. For $N=1$, $D^{ee}$ has the following structure 
\[D^{ee} = \begin{pmatrix}
0 & d_{12}\\
d_{21} & 0
\end{pmatrix}.\]
Hence the equations in $S^e(a)D^{ee}=S^e(b)(D^{ee})^T=O$ decouple, so we can solve for $d_{12}$ and $d_{21}$ independently. Without loss of generality we can set $d_{21}=0$, which gives the following set of equations
\[(k(z) \cosh(\rho a) + \rho \sinh(\rho a))d_{12} = (k(z) \cosh(\nu b) + \nu\sinh(\nu b))d_{12} =0.\]
Set $d_{12}$ to an arbitrary non-zero complex value, leaving the remaining conditions in the theorem. From \eqref{eq:eigenvector_def}, we get that the eigenvector $q$ in this case has the form $q(x,y) = \cosh(\rho x)\cosh(\nu y)$. 

Choosing a different matrix from $D^{eo},D^{oe},D^{oo}$ to be nonzero, gives the remaining conditions in the theorem. 
\end{proof}
The two equations in the theorem above, together with the condition $P_z(\rho,\nu)=0$ form a set of three equations with three unknowns $z,\rho,\nu$, which can be solved generically. So for $N=1$, we can indeed find generic eigenvectors which are exponentials. 

Also note that inserting $q(x,y) = \cosh(\rho x) \cosh(\rho y)$ into the right-hand side of \eqref{eq:equiv_N1} gives exactly $P_z(\rho,\nu)=0$ for the PDE and $k(z) \cosh(\rho a) + \rho \sinh(\rho a) = k(z) \cosh(\nu b) + \rho \sinh(\nu b) =0$ for the boundary condition.

We claim that with this theorem we have characterized all the eigenvalues. We will prove this by showing that we can construct a resolvent for all other values $z$. We do this by first constructing a basis of the eigenfunctions of the operator $L(z)$ that satisfy the boundary conditions.

%%%%%%%%%%%%%%%%%%%%%%%%%%%%%%%%%%
\subsection{Sturm-Liouville problems arising from neural field equations}\label{s:SLP}
Solving the characteristic equation $\Delta(z)q=0$ is equivalent to finding the solution of the boundary value problem \eqref{PDE_N1} and \eqref{BC}. Throughout this section we omit the $z$-dependence of $k$ and $c$ for clarity. We seek solutions of the PDE \eqref{PDE_N1} in the separated variable form
\begin{equation}\label{separate}
q(x,y)=\phi(x)\psi(y), (x,y)\in\Omega.
\end{equation}
Inserting this form into \eqref{PDE_N1}, we obtain
\[
\phi''(x)\psi''(y)-k^2(\phi''(x)\psi(y)+\phi(x)\psi''(y))+\Bigl( k^4-\frac{4ck^2}{z+\alpha}\Bigr)\phi(x)\psi(y)=0.
\]
Dividing by $\phi(x)\psi(y)$ and denoting $\phi''/\phi=f(x)$ and $\psi''/\psi=g(y)$,
\[
f(x)\left(g(y)-k^2\right)=k^2g(y)-k^4+\frac{4ck^2}{z+\alpha},
\]
or equivalently,
\[
f(x)=k^2+\frac{4ck^2}{(z+\alpha)(g(y)-k^2)}.
\]
Letting $\rho^2\in\mathbb{C}$ be the common constant value, we obtain two Sturm-Liouville problems (SLP) with Robin type 
boundary conditions
\begin{equation}\label{SLx}
\begin{cases}
\phi''(x)-\rho^2\phi(x)=0,\quad x\in[-a,a]\\
k\phi(-a)-\phi'(-a)=0 \\
k\phi(a)+\phi'(a)=0 
\end{cases}
\end{equation}  
and
\begin{equation}\label{SLy}
\begin{cases}
\psi''(y)-\nu^2\psi(y)=0,\quad y\in[-b,b]\\
k\psi(-b)-\psi'(-b)=0 \\
k\psi(b)+\psi'(b)=0, 
\end{cases}
\end{equation}  
where $\nu^2=k^2-\frac{4ck^2}{(z+\alpha)(k^2-\rho^2)}$. Note that the boundary conditions follow from Lemma~\ref{normalBC}.

Let us introduce the coordinate transformation $\tilde x =\frac{\pi}{2a}x$ into the SLP \eqref{SLx} and obtain
\begin{equation}\label{SLxtildeBC}
\left(\frac{\pi}{2a}\right)^2\phi''(\tilde x)-\rho^2\phi(\tilde x)=0,\quad \tilde x\in\left[-\pi/2,\pi/2\right].
\end{equation}  
Equivalently, the SLP is 
\begin{equation}\label{SLxtilde}
\begin{cases}
\phi''(\tilde x)+\lambda\phi(\tilde x)=0, \quad \tilde x\in\left[-\frac{\pi}{2},\frac{\pi}{2}\right],\ \lambda =-\left(\frac{2a}{\pi}\rho\right)^2,\\[10pt]
\Gamma_1(\phi):=k\phi\left(-\pi/2\right)-\frac{\pi}{2a}\phi'\left(-\pi/2\right)=0, \\[10pt]
\Gamma_2(\phi):=k\phi\left(\pi/2\right)+\frac{\pi}{2a}\phi'\left(\pi/2\right)=0. 
\end{cases}
\end{equation}

First, we check separately the case when $\lambda=0$ is an eigenvalue. Here there are two cases, either $k=k(z)=0$, then the problem reduces to the homogeneous Neumann boundary conditions and the eigenfunction corresponding to the zero eigenvalue is $\phi(\tilde x)=1$, or $k(z)=-\tfrac{1}{a}$, then $\phi(\tilde x)= \tilde x$ is a solution. 

We study the SLP \eqref{SLxtilde} by first rewriting it to a first order system as
\[
Y'(\tilde x)=(P-\lambda W)Y(\tilde x),\quad 
Y=\begin{pmatrix}
\phi\\
\phi'
\end{pmatrix},\quad \tilde x\in[-\pi/2,\pi/2],
\]
with 
\[
P=
\begin{pmatrix}
0&1\\
0&0
\end{pmatrix}, \quad 
W=
\begin{pmatrix}
0&0\\
1&0
\end{pmatrix}. 
\]
The boundary conditions can be reformulated as
\begin{equation}
A Y(-\pi/2)+BY(\pi/2)=0, \quad \text{with }
A=\begin{pmatrix} k&-\pi/2a\\ 0&0 \end{pmatrix},\quad  B=\begin{pmatrix}  0&0\\k&\pi/2a\end{pmatrix}.
\end{equation}
Let $\Phi(\cdot\,;x_0,\lambda)$ be the matrix solution of the initial value problem
\[
\Phi'=(P-\lambda W)\Phi, \quad \Phi(x_0;x_0,\lambda)=I,\ x_0\in[-\pi/2,\pi/2],\ \lambda\in\mathbb{C},
\]
with $I$ the identity matrix, and define the following transcendental function, also called characteristic function for the SLP 
\begin{equation}\label{char-functionSL}
\chi(\lambda)=\det\left(A+B\Phi(\pi/2;-\pi/2,\lambda)\right),\ \lambda\in\mathbb{C}.
\end{equation}
Let us recall the following lemma that shows that the zeroes of $\chi$ are precisely the eigenvalues of the SLP.

\begin{lem}[Lemma 3.2.2, \cite{SLproblems}] \label{lemma-eigenvaluesSL}
A complex number $\lambda$ is an eigenvalue of the BVP \eqref{SLxtilde} if and only if $\chi(\lambda)=0$. Furthermore, 
the geometric multiplicity of the eigenvalue $\lambda$ is equal to the number of linearly independent vector solutions $C=Y(-\pi/2)$ of the linear algebra system 
\[ \left[A+B\Phi(\pi/2;-\pi/2,\lambda)\right]C=0.
\]
\end{lem}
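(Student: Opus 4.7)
The plan is to reduce the boundary value problem to an algebraic problem via the fundamental matrix, exploiting the one-to-one correspondence between solutions of the ODE and initial data at the left endpoint.

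First I would observe that, since $P-\lambda W$ is continuous in $\tilde x$, the initial value problem $Y'=(P-\lambda W)Y$, $Y(-\pi/2)=C$ has a unique solution on $[-\pi/2,\pi/2]$ for every $C\in\mathbb{C}^2$ and every $\lambda\in\mathbb{C}$. By the definition of the fundamental matrix $\Phi$, this solution is $Y(\tilde x)=\Phi(\tilde x;-\pi/2,\lambda)C$. The map $C\mapsto Y(\cdot)$ is therefore a linear bijection between $\mathbb{C}^2$ and the two-dimensional solution space of the first order system, and hence also a linear bijection between $\mathbb{C}^2$ and the solution space of the scalar equation $\phi''+\lambda\phi=0$ (via $\phi=Y_1$).

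Next I would translate the boundary condition through this bijection. Setting $\tilde x=\pi/2$ gives $Y(\pi/2)=\Phi(\pi/2;-\pi/2,\lambda)C$, so the condition $AY(-\pi/2)+BY(\pi/2)=0$ is equivalent to the purely algebraic requirement
\begin{equation*}
\bigl[A+B\,\Phi(\pi/2;-\pi/2,\lambda)\bigr]C=0.
\end{equation*}
Consequently, $\lambda$ is an eigenvalue of the BVP \eqref{SLxtilde} (that is, there exists a nontrivial solution $\phi\not\equiv 0$) if and only if this $2\times 2$ linear system has a nontrivial solution $C\neq 0$, which is equivalent to the vanishing of its determinant, i.e.\ $\chi(\lambda)=0$ in the notation of \eqref{char-functionSL}.

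For the geometric multiplicity statement, I would invoke once more that the correspondence $C\leftrightarrow Y(\cdot)\leftrightarrow\phi(\cdot)$ is a linear isomorphism between $\mathbb{C}^2$ and the space of classical solutions of the scalar ODE, mapping the subspace of $C$'s satisfying $[A+B\Phi(\pi/2;-\pi/2,\lambda)]C=0$ precisely onto the eigenspace $\ker\Gamma$ associated with $\lambda$, where $\Gamma=(\Gamma_1,\Gamma_2)$. Hence the two kernels have the same dimension, which is by definition the geometric multiplicity. The only delicate point is ensuring that distinct $C$'s really do yield distinct $\phi$'s; this is immediate from uniqueness for the initial value problem, since $C=Y(-\pi/2)=(\phi(-\pi/2),\phi'(-\pi/2))^T$ is recovered directly from $\phi$. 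No step presents a real obstacle — this is a standard reformulation — so the writeup is essentially bookkeeping, and I would therefore simply cite Lemma 3.2.2 of \cite{SLproblems} and include at most a one-line indication of the argument above.
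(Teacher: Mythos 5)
Your argument is correct: the paper itself gives no proof of this lemma but simply quotes it from the cited reference, and your reduction of the BVP to the algebraic system via the fundamental matrix (with the injectivity of $C\mapsto\phi$ guaranteed by uniqueness for the initial value problem) is exactly the standard argument behind Lemma 3.2.2 of \cite{SLproblems}. Nothing is missing; citing the reference with a one-line indication, as you propose, matches what the paper does.
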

To compute $\Phi$ choose the branch $\mu=\sqrt{\lambda},$ $\lambda\not= 0$, and obtain
\begin{equation}
\Phi(\tilde x; x_0, \lambda)=\begin{pmatrix}
\cos(\mu(\tilde x-x_0))&\frac{1}{\mu}\sin(\mu(\tilde x-x_0))\\
-\mu\sin(\mu(\tilde x-x_0))&\cos(\mu(\tilde x-x_0))\end{pmatrix}.
\end{equation} 
Hence the characteristic function is 
\begin{align}\label{eigenvaluesSLxtilde}
\chi(\mu)&= k\frac{\pi}{a}\cos(\pi\mu)+\left(k^2\frac{1}{\mu}-\left(\frac{\pi}{2a}\right)^2  \mu\right)\sin(\pi \mu)=0,\ \mu=\sqrt{\lambda}.
\end{align}
Since $\chi(-\mu)=\chi(\mu)$, if $\mu\in\mathbb{C}$ is a root of the entire function $\chi$, then so is $-\mu$. According to Lemma~\ref{lemma-eigenvaluesSL}, the eigenvalues of the SLP \eqref{SLxtilde} are exactly the roots of the equation \eqref{eigenvaluesSLxtilde}. Note that this equation has infinite but countable number of roots, these are all simple and have no finite accumulation point in $\mathbb{C}$. A few roots of the SLP \eqref{SLx} are plotted in Figure~\ref{eigenvaluesSL} for $k=1.4-1.4i$ and $a=\pi$.
\begin{figure}%[htbp]
    \centering  
    \includegraphics[scale=.3]{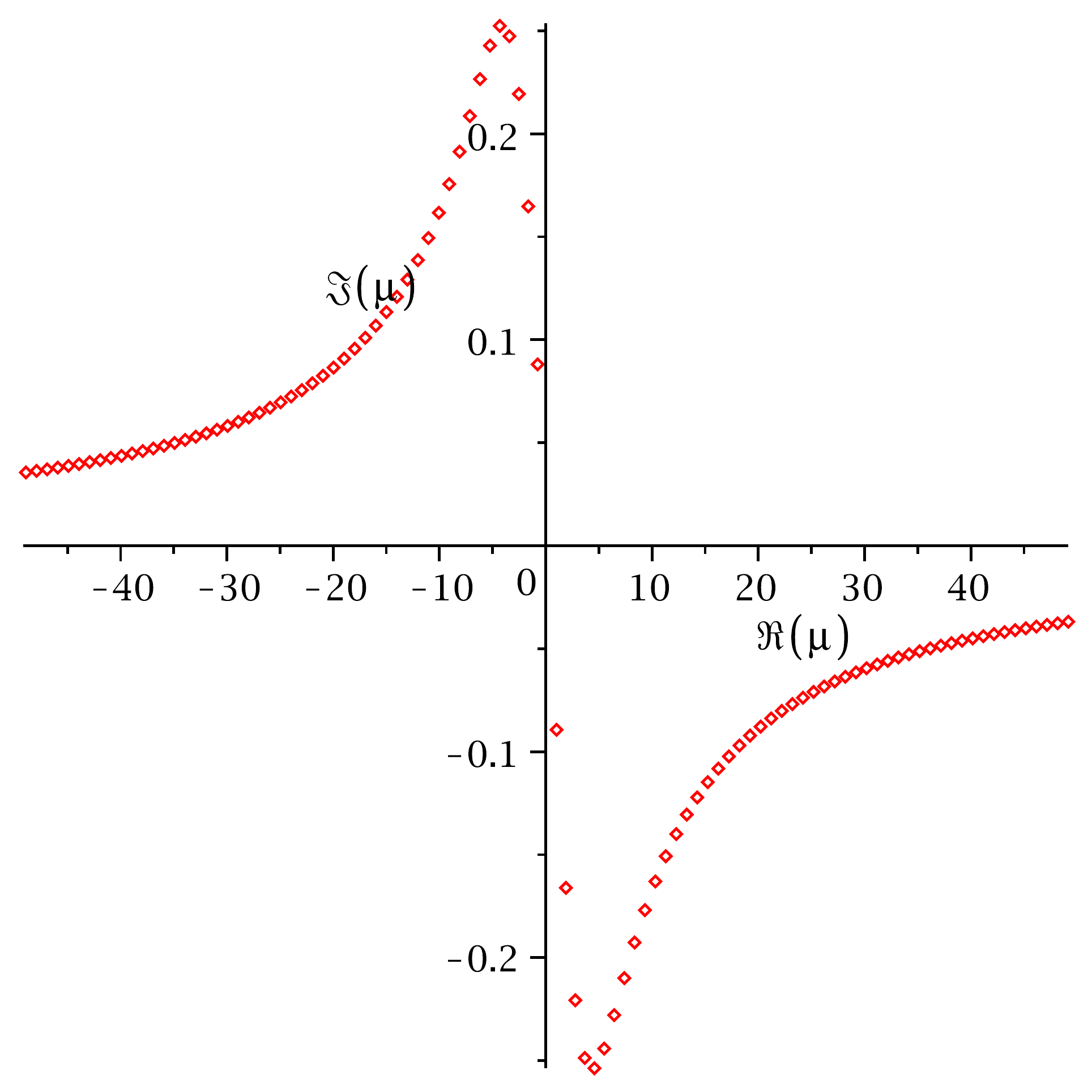}
    \caption{Some roots of the characteristic function $\chi(\mu)$ in \eqref{eigenvaluesSLxtilde}, when $k=1.4-1.4i$ and $a=\pi$.}\label{eigenvaluesSL}
\end{figure}
%%%%%%%%%%%%
\subsubsection{Eigenvalues and completeness of exponentials}\label{s:completeness_exp}
In this section we describe the location of the eigenvalues of the SLP in the complex plane and an interesting consequence which results from this distribution of the eigenvalues, i.e., the sets of exponential functions $\{e^{\pm \rho_n x}\}$ and $\{e^{\pm \nu_n y}\}$ used to construct the solutions of the corresponding Sturm-Liouville problems \eqref{SLx} and \eqref{SLy} are complete in $C([-a,a])$ and $C([-b,b])$, respectively. There is an extensive literature on the completeness of sets of complex exponential functions over finite intervals, see e.g., \cite{completeness_of_exp,completeness_of_basis_exp,young} and the references therein. In Section~\ref{s:completenes_eigSLP}, we will construct the eigenfunctions corresponding to the eigenvalues of the SLP using these exponentials. Note here that although we were not successful in proving the completeness of the eigenfunctions in the corresponding Banach space of continuous functions, but only in the larger space of square integrable functions, we were able to overcome this in Section~\ref{s:resolvent} and give a complete characterization of the spectrum of the DDE. 

Let us introduce some notations. The set of zeroes of a function $f$ is denoted by
\begin{equation}
  \mathcal{Z}(f)=\{\mu \mid f(\mu)=0\}
\end{equation}
and the number of zeroes of $f$ by $\nr\mathcal{Z}(f)$. In general, the cardinality of a set will be denoted by $\nr(\cdot)$.

Using the coordinate transformation introduced in \eqref{SLxtildeBC}, we will show that the set of complex exponentials $\{e^{i\mu_n \tilde x}\mid \mu_n\in\mathcal{Z}(\chi)\}$ is complete in $C\left(\left[-\pi/2,\pi/2\right]\right)$, where $\chi$ is the characteristic function \eqref{eigenvaluesSLxtilde}. Since $\chi(-\mu)=\chi(\mu)$, if we denote the roots of $\chi$ for which $\real\mu_n>0$ by $\mu_n$, then $\mu_{-n}=-\mu_n$ are also roots of $\chi$. This sequence is called symmetric and we can denote it by $\{\mu_n\}_{n=-\infty}^\infty$.

In the following theorem, we summarize two important results from \cite{young} that we will use to prove the completeness of sets of exponentials.
\begin{thm}\label{th:Young}
Let $\{\mu_n\}_{n=-\infty}^\infty$ be a sequence of complex numbers. 
\begin{itemize}
    \item[(1)] If 
    \begin{equation}\label{conditions completeness}
    \sup_n|\real\mu_n -n|<\frac{1}{4}\text{ and } \sup_n|\imag\mu_n|<\infty,
    \end{equation}
    then the system $\{e^{i\mu_n x}\}_{n=-\infty}^\infty$ is complete in $C(I)$, for each closed subinterval $I$ of $(-\pi,\pi)$.
    \item[(2)] The completeness of the system $\{e^{i\mu_n x}\}$ in $C(I)$ is unaffected if some $\mu_n$ is replaced by an other (different from all) number (\cite{young}, Theorem 7, Chapter 3).
\end{itemize}
\end{thm}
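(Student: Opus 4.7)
The plan is to derive both parts from Paley--Wiener theory and duality, which is the standard machinery underlying Young's results on nonharmonic Fourier series.

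For part (1), I would first dualise the completeness statement. By Hahn--Banach combined with the Riesz representation theorem, $\{e^{i\mu_n x}\}_{n=-\infty}^{\infty}$ fails to be complete in $C(I)$ if and only if there exists a nonzero finite regular complex Borel measure $\sigma$ on $I$ satisfying $\int_I e^{i\mu_n x}\,d\sigma(x)=0$ for every $n$. Its Fourier--Stieltjes transform
\[
F(z):=\int_I e^{izx}\,d\sigma(x)
\]
is entire, and since $I$ is a closed subinterval of the open interval $(-\pi,\pi)$ it has exponential type $\tau:=\max_{x\in I}|x|<\pi$. By construction $F$ vanishes on the whole sequence $\{\mu_n\}$.

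The next step is a quantitative zero-density argument. The hypotheses $\sup_n|\real\mu_n-n|<\frac{1}{4}$ and $\sup_n|\imag\mu_n|<\infty$ force the counting function $N(r):=\#\{n:|\mu_n|\le r\}$ to grow at least like $2r$, so the zeros of $F$ have lower density at least $1$ along the real axis. On the other hand, a standard Paley--Wiener--Jensen estimate (via Carleman's formula or an indicator-function argument) shows that any nonzero entire function of exponential type $\tau$ that is bounded on $\mathbb{R}$ has zero density at most $\tau/\pi<1$. Combining the two estimates forces $F\equiv 0$, and injectivity of the Fourier--Stieltjes transform on compactly supported measures then yields $\sigma=0$, a contradiction.

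For part (2) the plan is a division-and-multiplication trick on the Fourier-transform side. Assuming completeness of $\{e^{i\mu_n x}\}$ in $C(I)$, suppose, for contradiction, that replacing $\mu_k$ by a different number $\mu_k'$ destroyed completeness. Then some nonzero measure on $I$ would have Fourier--Stieltjes transform $G$ vanishing at every $\mu_n$ with $n\neq k$ and at $\mu_k'$. Setting
\[
H(z):=\frac{z-\mu_k}{z-\mu_k'}\,G(z),
\]
the factor $(z-\mu_k')$ cancels against $G(\mu_k')=0$, so $H$ is entire, has the same exponential type as $G$, and vanishes on the original sequence $\{\mu_n\}$. A Paley--Wiener characterisation then realises $H$ as the transform of a nonzero measure supported on $I$, contradicting completeness of the original system.

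The main obstacle is the quantitative ingredient in part (1): the constant $\frac{1}{4}$ in the hypothesis is sharp and not accessible by an elementary argument. It is intimately connected to Kadec's $\frac{1}{4}$-theorem on Riesz bases of exponentials in $L^2$, although here the conclusion is only completeness, which is strictly weaker than being a basis. Crucially, the argument uses that $I$ is a proper closed subinterval of $(-\pi,\pi)$, so that the exponential type satisfies $\tau<\pi$ strictly; on the full interval the dichotomy disappears. For part (2) the subtle point is verifying that $H$ really is the Fourier--Stieltjes transform of a compactly supported measure on $I$, which requires the standard growth-plus-integrability characterisation from Paley--Wiener theory.
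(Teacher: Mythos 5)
There is nothing in the paper to compare against: Theorem~\ref{th:Young} is imported verbatim from Young's book (part (2) is explicitly attributed to Theorem 7, Chapter 3 of \cite{young}), and the authors give no proof. So the only question is whether your sketch would actually reconstruct the cited results. For part (1) it would: the Hahn--Banach/Riesz duality reduction to an annihilating measure $\sigma$, the observation that $F=\hat\sigma$ is entire of exponential type $\tau=\max_{x\in I}|x|<\pi$ and bounded on $\mathbb{R}$, and the Jensen/Carleman density comparison are exactly the standard route. One correction of emphasis, though: for a \emph{proper} closed subinterval $I$ of $(-\pi,\pi)$ the constant $\tfrac14$ plays no role whatsoever --- any uniform bound $\sup_n|\real\mu_n-n|<\infty$ already forces the counting function to satisfy $n(r)\geq 2r-C$, while Jensen gives $\int_0^R n(r)r^{-1}\,dr\leq \tfrac{2\tau}{\pi}R+O(\log R)$, and $\tau/\pi<1$ closes the argument. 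Your closing remark that the $\tfrac14$ is ``sharp and not accessible by an elementary argument'' conflates this statement with Kadec's theorem on $[-\pi,\pi]$; for the version actually quoted here the elementary density argument suffices.

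Part (2) has the one genuine soft spot, and you half-identify it: there is no Paley--Wiener-type ``growth plus integrability'' characterisation of Fourier--Stieltjes transforms of measures on $I$, so you cannot directly conclude that $H(z)=\frac{z-\mu_k}{z-\mu_k'}G(z)$ is the transform of a measure supported on $I$. The standard repair is to write $\frac{z-\mu_k}{z-\mu_k'}=1+\frac{\mu_k'-\mu_k}{z-\mu_k'}$ and to show, by integration by parts using $G(\mu_k')=0$, that $G(z)/(z-\mu_k')$ is the transform of the bounded density $t\mapsto -i e^{-i\mu_k' t}\nu([c,t])$ on $I=[c,d]$, where $d\nu=e^{i\mu_k' t}\,d\sigma$ and the boundary term vanishes precisely because $\nu(I)=G(\mu_k')=0$. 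Then $H$ is the transform of a nonzero measure on $I$ vanishing on every $\mu_n$, contradicting completeness of the original system. With that substitution your outline is a correct proof of both parts; without it, part (2) as written does not close.
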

The conditions in \eqref{conditions completeness} show that all $\mu_n$ lie "near" the real axis. Our next two lemmas show that almost all roots of $\chi$ satisfy these conditions.
\begin{lem}\label{lem:real part}
    Consider the characteristic equation $\chi(\mu)=0$ in \eqref{eigenvaluesSLxtilde}. For every $k\in\mathbb{C}$ there exists $N\in\mathbb{N}$ and $0<\epsilon<1$ such that the followings hold
    \begin{itemize}
        \item[(a)] for all $n\geq N$ there is a unique $\mu_n\in\mathcal{Z}(\chi)$, such that $|\mu_n-n|<\epsilon$.
        \item[(b)] $\nr\{\mu\in\mathcal{Z}(\chi) : |\real \mu|\leq N\}=2N+2.$
    \end{itemize}
\end{lem}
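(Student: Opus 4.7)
The strategy is to view $\chi$ as a small perturbation of the leading term $f(\mu):=-\bigl(\tfrac{\pi}{2a}\bigr)^{2}\mu\sin(\pi\mu)$, with remainder $g(\mu):=\chi(\mu)-f(\mu)=(k\pi/a)\cos(\pi\mu)+(k^{2}/\mu)\sin(\pi\mu)$. The zeros of $f$ are precisely a double zero at $\mu=0$ together with simple zeros at the nonzero integers, so both parts of the lemma will follow from Rouché's theorem once the bound $|g|<|f|$ is established on appropriately chosen contours.

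For part (a), I would apply Rouché on the circle $C_n:=\{\mu:|\mu-n|=\epsilon\}$ for positive integers $n$ and fixed $\epsilon\in(0,\tfrac12)$. The periodicity identities $\sin(\pi\mu)=(-1)^{n}\sin(\pi(\mu-n))$ and $\cos(\pi\mu)=(-1)^{n}\cos(\pi(\mu-n))$ translate the estimates on $C_n$ into uniform bounds on the circle of radius $\epsilon$ about the origin, giving $|\sin(\pi\mu)|\geq \pi\epsilon/2$ and $|\cos(\pi\mu)|\leq\cosh(\pi\epsilon)$. Consequently
\[
|f(\mu)|\geq \frac{\pi^{3}}{8a^{2}}(n-\epsilon)\epsilon,\qquad |g(\mu)|\leq \frac{|k|\pi\cosh(\pi\epsilon)}{a}+\frac{|k|^{2}\cosh(\pi\epsilon)}{n-\epsilon},
\]
so $|g|<|f|$ on $C_n$ for all $n\geq N$ once $N=N(k,a,\epsilon)$ is large enough. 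Rouché then delivers exactly one zero of $\chi$ in each disk $|\mu-n|<\epsilon$, mirroring the unique simple zero of $f$ at $\mu=n$.

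For part (b), the natural contour is the rectangle $\partial R_N$ with $R_N:=\{\mu:|\real\mu|\leq N+\tfrac12,\ |\imag\mu|\leq M\}$, with $M$ to be chosen after $N$. On the vertical sides $\real\mu=\pm(N+\tfrac12)$ the exact formulas $|\sin(\pi\mu)|=\cosh(\pi\imag\mu)$ and $|\cos(\pi\mu)|=|\sinh(\pi\imag\mu)|$ give $|g|/|f|=O(1/N)$ uniformly in $\imag\mu$, and on the horizontal sides $\imag\mu=\pm M$ both trigonometric factors are of order $e^{\pi M}/2$, yielding $|g|/|f|=O(1/M)$. Taking $M$, then $N$, sufficiently large secures $|g|<|f|$ along all of $\partial R_N$. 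Counted with multiplicity, $f$ has one simple zero at each nonzero integer in $[-N,N]$ plus a double zero at $0$, for a total of $2N+2$ zeros inside $R_N$, and Rouché transfers this count to $\chi$. Combining with part (a) together with the symmetry $\chi(-\mu)=\chi(\mu)$, any zeros of $\chi$ with real part in $(N,N+\tfrac12)$ or $(-N-\tfrac12,-N)$ can only arise from the $\epsilon$-disks around $\pm N$ identified in (a); careful bookkeeping at this boundary, using the freedom to shrink $\epsilon$, then identifies all $2N+2$ roots as lying in $|\real\mu|\leq N$.

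The principal technical obstacle is controlling $|g|/|f|$ on the portions of $\partial R_N$ near the real axis, where neither $\sin(\pi\mu)$ nor $\cos(\pi\mu)$ enjoys exponential growth and $|\cos(\pi\mu)|$ actually tends to zero; the polynomial factor $|\mu|\sim N$ hidden inside $f$ is exactly what supplies the margin there. The apparent singularity of $g$ at $\mu=0$ is harmless since $\partial R_N$ stays away from the origin, while the two "extra" roots beyond the $2N$ nonzero integer locations in $[-N,N]$ arise naturally from the unfolding of the double root of $f$ at $\mu=0$ into a pair of nearby simple roots of $\chi$ (consistent with $\chi(0)=k\pi(1/a+k)\neq 0$ generically).
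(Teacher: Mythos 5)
Your argument is correct and rests on the same tool as the paper (Rouch\'e's theorem), but the decomposition is genuinely different. The paper divides $\chi$ through to obtain the equation $\tan(\pi\mu)=\frac{2h\mu}{\mu^2-h^2}$ with $h=\frac{2a}{\pi}k$, and compares the meromorphic function $F=\tan(\pi\cdot)-f_h$ against $\tan(\pi\cdot)$: part (a) comes from one zero of $\tan$ per shifted strip, and part (b) from the argument-principle count $\nr\mathcal{Z}(F)-\nr\mathcal{P}(F)=1$ on a large square, where the ``$+2$'' in $2N+2$ is contributed by the two poles of $f_h$ at $\pm h$ and one must then discard the spurious zero of $F$ at the origin (where $F$ vanishes but $\chi$ generically does not). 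You instead keep everything holomorphic, writing $\chi=f+g$ with dominant part $f(\mu)=-(\tfrac{\pi}{2a})^2\mu\sin(\pi\mu)$, so that the ``$+2$'' appears transparently as the double zero of $\mu\sin(\pi\mu)$ at the origin and no pole bookkeeping or removal of a spurious root is needed; the price is that you must carry the polynomial factor $|\mu|$ through the estimates to beat $|g|$ near the real axis on the vertical sides, which you correctly identify and handle. Your quantitative bounds on $C_n$ and on $\partial R_N$ are sound. Two caveats, both of which the paper's own proof shares at the same level of precision: the contour count gives the number of zeros inside a region of half-width $N+\tfrac12$ (resp.\ $N+\tfrac14$ in the paper) and bounded height, so identifying this with $\nr\{\mu:|\real\mu|\le N\}$ requires both the boundedness of imaginary parts (only established in the subsequent Lemma~\ref{lemma_arg}) and the boundary bookkeeping near $\pm N$ that you defer to ``shrinking $\epsilon$''; and the degenerate parameter values $k=0$ and $k=-1/a$ (where $\chi(0)=0$) need the count to be read with multiplicity. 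Neither caveat is a defect relative to the paper's argument.
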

\begin{proof}
We prove part (a) first. Let us rewrite $\chi(\mu)=0$ in \eqref{eigenvaluesSLxtilde} as
\begin{equation}\label{Fmu}
    F(\mu)=\tan(\pi\mu)-\frac{2 h \mu}{\mu^2-h^2}=0,\quad h=\frac{2a}{\pi}k.
\end{equation}
Note that if $F(\mu)=0$ then $F(-\mu)=0$. Since $\tan(\pi\mu)$ has zeroes at $n$ and poles at $1/2+n$, where $n\in\mathbb{Z}$, consider the following stripe in the complex plane
\[
S=\left\{z\in\mathbb{C}\mid \real z\in\left(-\frac{1}{4},\frac{1}{4}\right)\right\}.
\]
Let $\Gamma$ be a closed simple curve around $z=0$, with interior $G$, that is contained entirely in $S$ and avoids $\pm h$. The complement domain of $G$ in $S$ is denoted by $G^c$. Use the following two observations. First, 
\begin{equation}\label{min_lhs}
    \min_{\mu\in G^c}|\tan(\pi\mu)|=\epsilon>0,
\end{equation}
and second, that 
\begin{equation}\label{lim_rhs}
 \lim_{|\mu|\to\infty}\frac{2 h \mu}{\mu^2-h^2}=0\ \text{ for all } h\in\mathbb{C}.   
\end{equation}
The limit \eqref{lim_rhs} yields that there exists an $N_1\in\mathbb{N}$, such that
\[
\sup_{|\real\mu|>N_1}\left|\frac{2 h \mu}{\mu^2-h^2}\right|<\epsilon.
\]
Let $\tilde\Gamma$ be the shift of $\Gamma$ by $n\in\mathbb{N}$, where $n\geq N_1$ and $\pm h\not\in \tilde\Gamma$. Since 
\begin{equation}
 |\tan(\pi\mu)|\geq\epsilon,\quad  \left|\frac{2 h \mu}{\mu^2-h^2}\right|<\epsilon\quad \text{for all }\mu\in\tilde\Gamma,
\end{equation}
we can apply Rouch\'e's theorem to conclude that the number of zeros of $F$ in the interior of $\tilde\Gamma$ equals the number of zeros of $\tan(\pi\cdot)$. Due to the choice of $\tilde\Gamma$, we obtained that $F$ has exactly one zero inside $\tilde\Gamma$, which completes the proof of part (a) of the lemma.

To prove part (b), we study the zeroes of $F$, or equivalently, the solutions of 
\begin{equation}  
  \tan(\pi\mu)=\frac{2 h \mu}{\mu^2-h^2}=:f_h(\mu).
\end{equation}
When $h=0$, the roots are $\mu_n =n$, where $n\in\mathbb{Z}$. Note that in this case the SLP reduces to the Neumann problem and we know that the system $\{e^{i n x}\}_{n\in\mathbb{Z}}$ is complete in $C\left(\left[-\pi/2,\pi/2\right]\right)$.

Let us fix $h\not=0$ and let $\mu=x+iy$. Using the trigonometric relation
\[
\left|\tan(\pi(x+iy))\right|^2=\frac{\cosh(2\pi y)-\cos(2\pi x)}{\cosh(2\pi y)+\cos(2\pi x)},
\]
we have that, for $y\not=0$,
\begin{equation} 
\min_{x\in\mathbb{R}}\left|\tan(\pi(x+iy))\right|^2=\frac{\cosh(2\pi y)-1}{\cosh(2\pi y)+1}>0.
\end{equation}
Moreover, if $x=\pm(n+1/4)$, then $\left|\tan(\pi(x+iy))\right|^2=1$ for all $y\in\mathbb{R}$. The limit
\[
\lim_{|\mu|\to\infty}f_h(\mu)=0,\ \forall h\in\mathbb{C}
\]
suggests to take for given $h$ a closed curve around the origin, such that $|f_h(\cdot)|^2$ is small on this curve and it is far from the poles of $f_h$. Hence, define a square around the origin as
\[
\Gamma_{h,n}=\left\{\pm \left(n+\frac{1}{4}\right)+iy, y\in\left[-n-\frac{1}{4},n+\frac{1}{4}\right]\right\}\cup
\left\{x\pm i\left(n+\frac{1}{4}\right), x\in\left[-n-\frac{1}{4},n+\frac{1}{4}\right]\right\}.
\]
Then
\begin{equation}
\lim_{n\to\infty}\max_{\mu\in\Gamma_{h,n}}|f_h(\mu)|^2= 0\text{ and } \lim_{n\to\infty}\min_{\mu\in\Gamma_{h,n}}|\tan(\pi\mu)|^2= 1.
\end{equation}
Therefore, there exists $N_2\in\mathbb{N}$ such that the poles of $f_h$, that is $\pm h$, are contained in the interior of $\Gamma_{h,N_2}$ and $|h-\mu|>1$ for all $\mu\in \Gamma_{h,N_2}$, and $N_2$ is large enough such that 
\begin{equation}
 |f_h(\mu)|^2<1/2 \text{ and } |\tan(\pi\mu)|^2>1/2 \text{ for all }\mu\in\Gamma_{h,N_2}.  
\end{equation}
Then we can apply Rouch\'e's theorem, which says that in the interior of $\Gamma_{h,N_2}$
\[
\nr\mathcal{Z}(\tan(\pi\cdot))-\nr\mathcal{P}(\tan(\pi\cdot))=\nr\mathcal{Z}(F)-\nr\mathcal{P}(F),
\]
where $\nr\mathcal{P(\cdot)}$ counts the number of poles of the corresponding functions. The left hand side equals 1 and on the right hand side  
\begin{equation}
 \nr\mathcal{P}(F)=\nr\mathcal{P}(\tan(\pi\cdot))+\nr\mathcal{P}(f_h)=2N_2+2,    
\end{equation}
since $\pm h$ are in the interior of $\Gamma_{h,N_2}$. Hence, $\nr\mathcal{Z}(F)=2N_2+3$. Since $\chi$ has the same zeroes as $F$, except $\mu=0$, we can conclude that $\nr\mathcal{Z}(\chi)=2N_2+2$ in the interior of $\Gamma_{h,N_2}$, which completes part (b) of the lemma. 

Therefore, letting $N=\max\{N_1,N_2\}$ is then suitable for both parts of the lemma.
\end{proof}

\begin{cor}
An immediate consequence of Lemma~\ref{lem:real part} is that 
\[\sum_{\mu\in\mathcal{Z}}\frac{1}{|\mu|}=\infty. \]
\end{cor}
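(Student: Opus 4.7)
The plan is to extract from Lemma~\ref{lem:real part}(a) an explicit infinite subfamily of zeros $\{\mu_n\}_{n\ge N}\subset\mathcal{Z}(\chi)$ with $|\mu_n-n|<\epsilon<1$, and then estimate the partial sum over this subfamily from below by a divergent harmonic-type series.

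Concretely, I would first invoke Lemma~\ref{lem:real part}(a) to fix the threshold $N\in\mathbb{N}$ and the constant $0<\epsilon<1$ it produces, obtaining a sequence of distinct zeros $\mu_n\in\mathcal{Z}(\chi)$ for each $n\ge N$ that satisfy $|\mu_n-n|<\epsilon$. By the triangle inequality this gives
\[
|\mu_n|\le n+\epsilon< n+1,\qquad n\ge N,
\]
so that $\tfrac{1}{|\mu_n|}\ge \tfrac{1}{n+1}$. Dropping all other terms (they are nonnegative), I would conclude
\[
\sum_{\mu\in\mathcal{Z}(\chi)}\frac{1}{|\mu|}\;\ge\;\sum_{n\ge N}\frac{1}{|\mu_n|}\;\ge\;\sum_{n\ge N}\frac{1}{n+1}\;=\;+\infty,
\]
since the tail of the harmonic series diverges. (One could equally well exploit the symmetry $\chi(-\mu)=\chi(\mu)$ to include the mirror zeros $\mu_{-n}=-\mu_n$ and double the estimate, but this is not needed to force divergence.)

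There is no real obstacle here: the content of the corollary is simply that the zeros of $\chi$ have density comparable to the integers on the real line, and the harmonic series over the integers diverges. The only thing one must be careful about is that the zeros $\mu_n$ produced by Lemma~\ref{lem:real part}(a) are all distinct (which is automatic since they lie in disjoint unit disks about different integers $n$), so no term in the sum is being counted twice when bounding from below.
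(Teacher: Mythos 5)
Your argument is correct and is precisely the comparison the paper has in mind: the paper states this corollary without proof because Lemma~\ref{lem:real part}(a) places one zero within distance $\epsilon<1$ of each integer $n\geq N$, so the sum dominates a tail of the harmonic series. One tiny quibble: disks of radius $\epsilon<1$ about consecutive integers need not be disjoint, so distinctness of the $\mu_n$ is not quite ``automatic'' as you phrase it; it does follow from the construction in the lemma's proof (the contour $\tilde\Gamma$ lies in a strip of half-width $1/4$ about each integer), and in any case summing only over even $n$ would sidestep the issue entirely.
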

\begin{lem}\label{lemma_arg}
Consider the set $\mathcal{Z}(\chi)=\{\mu_n\}$, with $\chi$ given in \eqref{eigenvaluesSLxtilde}. Then
\begin{equation}\label{argument}
  \sup_n|\imag \mu_n|<\infty. 
\end{equation}
\end{lem}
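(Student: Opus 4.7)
The plan is to proceed by contradiction, exploiting the mismatch between the saturation of $|\tan(\pi\mu)|$ at $1$ as $|\imag\mu|\to\infty$ and the decay to zero of the rational right-hand side of the characteristic equation. Suppose $\sup_n|\imag\mu_n|=\infty$; then there is a subsequence, still denoted $\{\mu_n\}$, with $|\imag\mu_n|\to\infty$. Since the exceptional set $\{0,\pm h\}$, with $h=2ak/\pi$, is finite, I may discard finitely many terms and assume that every $\mu_n$ in the subsequence lies in the common zero set of $\chi$ and of the reformulated function
$$F(\mu)=\tan(\pi\mu)-\frac{2h\mu}{\mu^2-h^2}$$
used in Lemma~\ref{lem:real part}. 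In particular, along the subsequence
$$\tan(\pi\mu_n)=f_h(\mu_n)=\frac{2h\mu_n}{\mu_n^2-h^2}.$$

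I would then analyze the two sides of this identity separately. For the right-hand side, $|\mu_n|\geq|\imag\mu_n|\to\infty$, so the limit \eqref{lim_rhs} from the proof of Lemma~\ref{lem:real part} gives $|f_h(\mu_n)|\to 0$. For the left-hand side, writing $\mu_n=x_n+iy_n$, I invoke the identity
$$|\tan(\pi(x+iy))|^2=\frac{\cosh(2\pi y)-\cos(2\pi x)}{\cosh(2\pi y)+\cos(2\pi x)}$$
already recorded in the proof of Lemma~\ref{lem:real part}. Since $|\cos(2\pi x_n)|\leq 1$ while $\cosh(2\pi y_n)\to\infty$, the ratio tends to $1$ uniformly in $x_n$, so $|\tan(\pi\mu_n)|\to 1$.

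Combining these two asymptotics in $\tan(\pi\mu_n)=f_h(\mu_n)$ forces $1=0$, the desired contradiction, so $\sup_n|\imag\mu_n|<\infty$. There is no serious technical obstacle in this plan: the trigonometric identity for $|\tan|^2$, the decay of $f_h$ at infinity, and the equivalence between the zero sets of $\chi$ and $F$ were all set up in Lemma~\ref{lem:real part}, and the present statement is essentially a corollary extracting one further consequence of that same asymptotic mismatch. The only minor care needed is in stripping out the finitely many zeros at or near $\{0,\pm h\}$, which is harmless for a supremum statement.
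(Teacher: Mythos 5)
Your proposal is correct and follows essentially the same route as the paper: assume a subsequence with $|\imag\mu_n|\to\infty$, plug it into $\tan(\pi\mu)=f_h(\mu)$, and derive a contradiction from the right-hand side decaying to $0$ while the tangent does not. The only cosmetic difference is that you show $|\tan(\pi\mu_n)|\to 1$ via the modulus identity for $|\tan(\pi(x+iy))|^2$, whereas the paper uses the addition formula to show the left-hand side converges to $\pm i$; both yield the same contradiction.
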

\begin{proof}
First, note that the set $\mathcal Z(\chi)$ cannot have finite accumulation points. If all  eigenvalues  $\mu_n$,  are real, then \eqref{argument} holds. 

If the assertion is not true, then there exists a subsequence  $\{\mu_{n_k}\}$, such that $|y_{n_k}|\to\infty$, where $\mu_{n_k}=x_{n_k}+iy_{n_k}$. If we insert this into \eqref{Fmu}, we obtain
\[
\tan(\pi(x_{n_k}+iy_{n_k}))=\frac{\tan(\pi x_{n_k})+i\tanh{(\pi y_{n_k})}}{1-i\tan(\pi x_{n_k})\tanh(\pi y_{n_k})}=\frac{2h(x_{n_k}+i y_{n_k})}{(x_{n_k}+i y_{n_k})^2-h^2}.
\]
Taking the limit $|y_{n_k}|\to\infty$, we obtain that the left hand side converges to $i$ and the right hand side to $0$, which leads to a contradiction.
\end{proof}

The main result of this section is the following theorem.
\begin{thm}[Completeness theorem]\label{th:main completeness result}
Let $\mathcal{Z}(\chi)=\{\mu_n\}_{n=-\infty}^{\infty}$, where $\chi$ is the characteristic function in \eqref{eigenvaluesSLxtilde}. Then the set $\{e^{i\mu_n \tilde x}\}_{n=-\infty}^\infty$ is complete in $C([-\pi/2,\pi/2])$.
\end{thm}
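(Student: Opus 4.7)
The plan is to deduce the theorem from Theorem~\ref{th:Young}: part~(1) gives completeness once we have an enumeration $\{\mu_n\}_{n\in\mathbb{Z}}$ with $\sup_n|\real\mu_n-n|<1/4$ and $\sup_n|\imag\mu_n|<\infty$, and part~(2) lets us relax any finite number of terms to reach this configuration. Lemma~\ref{lemma_arg} already supplies the imaginary-part bound globally, and Lemma~\ref{lem:real part}(a), together with the choice $\epsilon<1/4$ and the evenness $\chi(-\mu)=\chi(\mu)$, supplies the real-part bound for every $|n|\geq N$ once we label the unique identified zero near $n$ by $\mu_n$ and set $\mu_{-n}=-\mu_n$.

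First I would enumerate $\mathcal{Z}(\chi)$ as $\{\mu_n\}_{n\in\mathbb{Z}}$ using the identified labels for $|n|\geq N$, and then assign the finitely many zeros in the strip $|\real\mu|\leq N$ (finite by Lemma~\ref{lem:real part}(b) together with Lemma~\ref{lemma_arg}) to the leftover integer labels $|n|<N$. Any mismatch between the count $2N+2$ from part~(b) and the $2N+1$ integers in $[-N,N]$ will simply be absorbed into this finite block. The resulting sequence then satisfies $|\real\mu_n-n|<1/4$ for all but finitely many indices, while $\sup_n|\imag\mu_n|<\infty$ holds throughout.

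Next I would apply Theorem~\ref{th:Young}(2) to replace each remaining ``bad'' $\mu_n$ (with $|\real\mu_n-n|\geq 1/4$) by the integer $n$ itself, or by a tiny real perturbation of $n$ chosen to be distinct from every other term of the sequence --- always possible, since the other terms form a countable set without finite accumulation points. This produces a sequence $\{\tilde\mu_n\}$ satisfying both conditions of \eqref{conditions completeness}, so Theorem~\ref{th:Young}(1) yields that $\{e^{i\tilde\mu_n\tilde x}\}_{n\in\mathbb{Z}}$ is complete in $C(I)$ for every closed $I\subset(-\pi,\pi)$, in particular for $I=[-\pi/2,\pi/2]$. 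Running the same finite chain of substitutions backwards, one step at a time via part~(2), transfers this completeness back to the original system $\{e^{i\mu_n\tilde x}\}$.

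The main obstacle is the enumeration bookkeeping rather than any hard analysis: Lemma~\ref{lem:real part} does not give an immediate bijection between $\mathcal{Z}(\chi)$ and $\mathbb{Z}$ with $|\mu_n-n|<1/4$ for all large $n$, because of the off-by-one discrepancy in the interior count in (b) versus the integers near the origin. Once one accepts finitely many exceptional indices (to be handled by part~(2)), the rest --- ensuring distinctness in the replacements, the inclusion $[-\pi/2,\pi/2]\subset(-\pi,\pi)$, and verifying the two supremum conditions --- is routine.
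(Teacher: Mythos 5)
Your proposal is correct and follows essentially the same route as the paper: both verify the two hypotheses of Theorem~\ref{th:Young}(1) for all sufficiently large indices via Lemma~\ref{lem:real part}(a) and Lemma~\ref{lemma_arg}, and then dispose of the finitely many roots near the origin (including the $2N+2$ versus $2N+1$ count mismatch from Lemma~\ref{lem:real part}(b)) by a finite chain of replacements justified by Theorem~\ref{th:Young}(2). Your write-up is somewhat more explicit about the enumeration bookkeeping and the distinctness of the substituted values, but the argument is the same.
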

\begin{proof}
From Lemma~\ref{lem:real part} it follows that there exists an $N\in\mathbb{N}$ such that 
\begin{equation}
    \sup_{|n|>N}|\real\mu_n -n|<\frac{1}{4}.
\end{equation}
Moreover, since $\nr\{\mu\in\mathcal{Z}(\chi) : |\real \mu|\leq N\}=2N+2$, let us replace $2N+1$ of these roots by $n$ in the exponentials, that is by $\{e^{in \tilde x}\}_{n=-N}^N$. The new set of exponentials will now satisfy the condition on the real part of the eigenvalues in \eqref{conditions completeness}. This, in combination with \eqref{argument}, implies that this set is complete in each closed subinterval of $(-\pi,\pi)$. According to part (2) of Theorem~\ref{th:Young}, if we replace the set $\{e^{in \tilde x}\}_{n=-N}^N$ by the corresponding finite set $\{e^{i\mu_n \tilde x}\}$, then the completeness will be unaffected. 
\end{proof}

This way we have shown that the set $\{e^{\pm \rho_n x}\}$ is complete in $C([-a,a])$ and analogously, we can show that the set $\{e^{\pm \nu_m y}\}$ is complete in $C([-b,b])$. Then $\{e^{\pm \rho_n x}e^{\pm \nu_m y}\}$ forms a complete set in $C(\bar\Omega)$. 
%The eigenfunction corresponding to an eigenvalue $\mu$ is 
%\begin{equation}\label{eqigenvaluesSLPxtilde}
%\phi(\tilde x)=c_1e^{i\mu \tilde x}+c_2e^{-i\mu \tilde x}, c_1,c_2\in\mathbb{C},\ \mu=\sqrt\lambda,\
%\phi(\tilde x)=c_1\cos\left(\mu(\tilde x +\frac{\pi}{2})\right)+c_2\frac{1}{\mu}\sin\left(\mu(\tilde x +\frac{\pi}{2})\right), c_1,c_2\in\mathbb{C},\ \mu=\sqrt\lambda,\ \tilde x\in[-\pi/2,\pi/2].
%\end{equation} 
\subsubsection{Completeness of the eigenfunctions}\label{s:completenes_eigSLP}
In  \cite{SLP_Marchenko}, the solution of SLP problems are discussed in a more general framework. Based on these results, we construct the eigenfunctions corresponding to the eigenvalues of the SLP and state their completeness in the space of square integrable functions. As a consequence, the eigenfunctions of the the operator $L(z)$, which are the separable solutions of the BVP \eqref{eq:equiv_N1}, form a complete basis in $L^2(\Omega)$. In Section~\ref{s:resolvent} we show that this is sufficient to give a complete characterization of the spectrum of the DDE and to solve the resolvent problem in Section~\ref{s:resolvent}. 

Using the results of the previous section, we can conclude that the eigenvalues of the SLP \eqref{SLxtilde} with large absolute value are of the form
\begin{equation}\label{eigenvalue_form}
    \lambda_n=\left(n+\frac{\delta_n}{n}\right)^2,\quad \text{where } \sup|\delta_n|<\infty.
\end{equation}
Note that this was also shown in \cite{SLP_Marchenko}. Following the ideas and results in \cite{SLP_Marchenko}, we can construct the following solutions of the differential equation in \eqref{SLxtilde}
\begin{align}
    \phi_1(\mu,\tilde x)&=-\frac{\pi}{2a}\cos\left(\mu(\tilde x +\frac{\pi}{2})\right)-k\frac{1}{\mu}\sin\left(\mu(\tilde x +\frac{\pi}{2})\right) \\
    \phi_2(\mu,\tilde x)&=\frac{\pi}{2a}\cos\left(\mu(\tilde x +\frac{\pi}{2})\right)-k\frac{1}{\mu}\sin\left(\mu(\tilde x -\frac{\pi}{2})\right). 
\end{align}
Moreover, since 
\[
\Gamma_1(\phi_1)=\Gamma_2(\phi_2)=0,\quad
\Gamma_1(\phi_2)=-\Gamma_2(\phi_1)=\chi(\mu),
\]
if $\mu$ is a root of the characteristic function \eqref{eigenvaluesSLxtilde}, then $\phi_1$ and $\phi_2$ solve the SLP \eqref{SLxtilde} and they are called eigenfunctions corresponding to the eigenvalue $\lambda$, with $\mu=\sqrt\lambda$. 

\begin{thm}[Theorem 1.3.2., \cite{SLP_Marchenko}]\label{thm:completenessL2}
The system of eigenfunctions and generalized eigenfunctions of the BVP \eqref{SLxtilde} is complete in the space $L^2\left((-\pi/2,\pi/2)\right)$ and constitutes there a Riesz basis.
\end{thm}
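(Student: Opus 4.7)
The plan is to combine the sharp spectral asymptotics already established in Lemmas~\ref{lem:real part}--\ref{lemma_arg} with a Bari--Markus type perturbation argument, taking the orthonormal Neumann cosine basis on $[-\pi/2,\pi/2]$ as the reference. First I would refine Lemma~\ref{lem:real part}(a) to the quantitative statement $\mu_n=n+\delta_n/n$ with $\sup_n|\delta_n|<\infty$ (the form \eqref{eigenvalue_form}); this follows by writing the characteristic equation \eqref{Fmu} as $\tan(\pi\mu)=f_h(\mu)$ and inverting near each integer $n$ with an implicit function / Newton step, using that $f_h(\mu)=O(1/\mu)$ and $\frac{d}{d\mu}\tan(\pi\mu)=\pi\sec^2(\pi\mu)$ is bounded away from $0$ in a fixed neighbourhood of $n$ once $n$ is large. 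In the same step I would check that for $|n|$ large the eigenvalues are algebraically simple (by a direct order-of-vanishing computation of $\chi$), so no Jordan chains appear past a finite index $N_0$.

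Next I would plug this asymptotic into the explicit eigenfunctions $\phi_1(\mu_n,\tilde x)=-\frac{\pi}{2a}\cos\!\bigl(\mu_n(\tilde x+\pi/2)\bigr)-\frac{k}{\mu_n}\sin\!\bigl(\mu_n(\tilde x+\pi/2)\bigr)$. A Taylor expansion in $\mu_n-n$ yields
\begin{equation*}
\phi_1(\mu_n,\tilde x)=-\tfrac{\pi}{2a}\cos\!\bigl(n(\tilde x+\pi/2)\bigr)+r_n(\tilde x),\qquad \|r_n\|_{L^2(-\pi/2,\pi/2)}=O(1/n),
\end{equation*}
because both the first-order correction from $\mu_n\to n$ in the cosine and the $k/\mu_n$ prefactor of the sine contribute $O(1/n)$ in sup norm on a fixed interval. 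After normalising, the sequence $\{\tilde\phi_n\}_{|n|\ge N_0}$ is therefore quadratically close, $\sum_n\|\tilde\phi_n-e_n\|_{L^2}^2<\infty$, to the orthonormal basis $e_n(\tilde x)=\sqrt{2/\pi}\cos(n(\tilde x+\pi/2))$ (with the obvious modification at $n=0$).

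With closeness in hand I would invoke the Bari--Markus theorem: a system in a Hilbert space that is quadratically close to an orthonormal basis and is itself complete and minimal forms a Riesz basis. Completeness of the full eigen/generalized-eigen system in $L^2(-\pi/2,\pi/2)$ already follows from the completeness-of-exponentials Theorem~\ref{th:main completeness result} together with the fact that each $\phi_1(\mu_n,\cdot),\phi_2(\mu_n,\cdot)$ is a linear combination of $e^{\pm i\mu_n\tilde x}$ and that the $C$-dense set is $L^2$-dense; minimality follows from the existence of a biorthogonal system built from eigenfunctions of the adjoint problem (which has the same structure with $k$ replaced by $\bar k$).

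The finitely many low-index eigenvalues $\{\mu_n:|n|\le N_0\}$ are treated separately: since the Bari--Markus criterion is stable under replacing finitely many basis vectors by any linearly independent set in their span, I would adjoin the remaining eigenfunctions and, at each algebraically multiple eigenvalue, the associated generalized eigenfunctions obtained by solving the chain equations $(L(z)-\lambda)\psi_{j+1}=\psi_j$ along the corresponding Jordan block; this is a finite-dimensional linear-algebra computation and does not affect the quadratic-closeness tail. The main obstacle I anticipate is precisely this interface between the asymptotic part of the spectrum (where everything is simple and close to the Neumann basis) and the finite exceptional set: one must verify that no generalized eigenfunction escapes the span, and that the bi-orthogonal partner of each $\tilde\phi_n$ has norm growing at most polynomially, which is what upgrades completeness-plus-closeness to the Riesz property.
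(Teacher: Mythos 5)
The paper does not actually prove this statement: it is imported verbatim as Theorem~1.3.2 of the cited Marchenko monograph, so there is no internal proof to compare against. Your strategy --- eigenvalue asymptotics $\mu_n=n+\delta_n/n$, quadratic closeness of the eigenfunctions to the Neumann cosine basis $\cos\bigl(n(\tilde x+\pi/2)\bigr)$, and a Bari--Markus stability argument --- is exactly the standard route to this result (and essentially Marchenko's own), and the asymptotic and closeness estimates you sketch are correct and are supported by Lemma~\ref{lem:real part} and \eqref{eigenvalue_form}.

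There is, however, one genuine gap, precisely at the point you yourself flag as the main obstacle. Quadratic closeness to an orthonormal basis does not by itself give a Riesz basis; you must supply either completeness together with minimality, or $\omega$-linear independence. Your proposed source of completeness --- Theorem~\ref{th:main completeness result} on the exponentials $\{e^{\pm i\mu_n\tilde x}\}$ plus the observation that each eigenfunction is a linear combination of $e^{\pm i\mu_n\tilde x}$ --- is a non sequitur: that observation only shows $\overline{\spn}\{\phi(\mu_n,\cdot)\}\subseteq\overline{\spn}\{e^{\pm i\mu_n\tilde x}\}$, which is the wrong inclusion. Since each eigenvalue is geometrically simple, the eigenspace at $\mu_n$ retains only a one-dimensional slice (either $\cos(\mu_n\tilde x)$ or $\sin(\mu_n\tilde x)$, depending on whether $\tilde g(\mu_n,a)=0$ or $\tilde f(\mu_n,a)=0$) of the two-dimensional span of $e^{\pm i\mu_n\tilde x}$, so completeness of the exponential system says nothing about completeness of the eigenfunction system. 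Concretely, $\sin(\mu_{2n}\tilde x)$ lies in the span of the exponentials but is not obviously in the closed span of the eigenfunctions.

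To close the gap you must establish completeness (or $\omega$-independence) of the eigen/generalized-eigen system by an independent argument: either Marchenko's contour-integration of the Green's function of \eqref{SLxtilde} over the expanding squares $\Gamma_{h,n}$, showing $\frac{1}{2\pi i}\oint R_\lambda f\,d\lambda\to f$ in $L^2$; or a Fredholm argument showing that the map $e_n\mapsto\tilde\phi_n$ is $I+K$ with $K$ Hilbert--Schmidt and injective, where injectivity is exactly $\omega$-linear independence of $\{\tilde\phi_n\}$ and still has to be proved. The eigenvalue-counting statement of Lemma~\ref{lem:real part}(b) is what guarantees the index sets of the two systems match up with no deficiency, and should be cited explicitly at that point. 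The remaining ingredients of your sketch (simplicity of large eigenvalues, the finite exceptional set, minimality via the adjoint problem with $\bar k$) are plausible and standard, but they all sit downstream of this missing completeness step.
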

Since all roots of the characteristic function $\chi(\mu)$ are simple, we do not have generalized eigenfunctions for this problem. The linear span of the eigenfunctions constructed from the solutions $\phi_1$ and $\phi_2$ coincide. It is therefore sufficient to consider the eigenfunctions derived from $\phi_1$ hence, we will omit the subscript. 

If $\mathcal{Z}(\chi)=\{\mu_n\}$, with $\mu_n=\sqrt{\lambda_n}$, then the corresponding eigenfunctions will be denoted as
\begin{equation}
\begin{split}
    \phi_n(\tilde x):=& \phi(\mu_n,\tilde x)=-\frac{\pi}{2a}\cos\left(\mu_n(\tilde x +\frac{\pi}{2})\right)-k\frac{1}{\mu_n}\sin\left(\mu_n(\tilde x +\frac{\pi}{2})\right)\\[5pt]
    =&\tilde f(\mu_n,a)\cos(\mu_n\tilde x)+\tilde g(\mu_n,a)\sin(\mu_n\tilde x),\quad \tilde x\in [-\pi/2,\pi/2],
\end{split}
\end{equation}
where  
\[
    \tilde f(\mu,a)=-\frac{\pi}{2a}\cos(\mu\frac{\pi}{2})-\frac{k}{\mu}\sin(\mu \frac{\pi}{2}),\quad 
    \tilde g(\mu,a)=\frac{\pi}{2a}\sin(\mu\frac{\pi}{2})-\frac{k}{\mu}\cos(\mu \frac{\pi}{2})
\]
and the following identity holds
\[
    2\mu \tilde f(\mu,a)\tilde g(\mu,a)=\chi(\mu).
\]
From here, it follows that if $\mu\not=0$ is a root of the characteristic function, then either $\tilde f(\mu,a)=0$ or $\tilde g(\mu,a)=0$. From \eqref{eigenvalue_form} it follows that the roots of $\tilde f$ are those roots of $\chi$ that have the form $\mu_{2n-1}=2n-1+\frac{\delta_{2n-1}}{2n-1}$ and the corresponding eigenfunctions we call \emph{odd eigenfunctions} and they have the form 
\begin{equation}
    \phi_{2n-1}(\tilde x)=\tilde g(\mu_{2n-1},a)\sin(\mu_{2n-1}\tilde x),\ n=1,2,\dots.
\end{equation}
Similarly, the roots of $\tilde g$ have the form $\mu_{2n}=2n+\frac{\delta_{2n}}{2n}$ and the corresponding eigenfunctions are called \emph{even eigenfunctions}   
\begin{equation}
    \phi_{2n}(\tilde x)=\tilde f(\mu_{2n},a)\cos(\mu_{2n}\tilde x),\ n=0,1,2,\dots.
\end{equation}
Summarizing, Theorem~\ref{thm:completenessL2} implies that the set of even and odd eigenfunctions $\{\phi_{2n-1}(\tilde x),\phi_{2n}(\tilde x)\}_{n=1}^\infty$ is complete in $L^2((-\pi/2,\pi/2))$.

In the original coordinate system the eigenfunctions are 
\begin{equation}
    \begin{split}
   \phi_n(x)&= -\frac{\pi}{2a}\cosh\left(\rho_n( x +a)\right)-k\frac{\pi}{2a}\frac{1}{\rho_n}\sinh\left(\rho_n( x +a)\right)\\
    &=-\frac{\pi}{2a}\left(f(\rho_n, a)\cosh(\rho_n x)+g(\rho_n,a)\sinh(\rho_n x)\right),\quad x\in[-a,a], 
\end{split}
\end{equation}
where 
\[
    f(\rho,a)=\cosh(\rho a)+\frac{k}{\rho}\sinh(\rho a),\quad 
    g(\rho,a)=\sinh(\rho a)+\frac{k}{\rho}\cosh(\rho a)
\]
and the following holds
\[
    2\rho f(\rho,a)g(\rho,a)=\frac{2a}{\pi}\chi(\rho)=2k\cosh(2a\rho)+\left(\frac{k^2}{\rho}+\rho\right)\sinh(2a\rho).
\]
Using the same argument as before, if $\rho$ is a root of $\chi(\rho)$, then either $f$ or $g$ vanish there. Note that, these are precisely the conditions we obtained earlier in Theorem~\ref{thm:eigenvectors}. We can conclude that the system $\{\phi_{2n-1}(x),\phi_{2n}(x)\}_{n=1}^\infty$ is complete in $L^2([-a,a])$, with
\begin{equation}
    \phi_{2n-1}(x)=g(\rho_{2n-1},a)\sinh(\rho_{2n-1} x),\quad \phi_{2n}(x)=f(\rho_{2n},a)\cosh(\rho_{2n} x).
\end{equation}
Note that the following relations hold
\[
\tilde f(\mu_{2n},a)=-\frac{\pi}{2a}f(\rho_{2n},a),\quad \tilde g(\mu_{2n+1},a)=i\frac{\pi}{2a}g(\rho_{2n+1},a).
\]

Analogous result holds for the eigenvalues and corresponding eigenfunctions of the SLP \eqref{SLy}. Returning to the original problem of solving the BVP \eqref{PDE_N1} and \eqref{BC} by separating the variables, we can summarize as follows. Consider the bilinear mapping
\begin{align*}
L^2([-a,a])\times L^2([-b,b])&\to L^2\left([-a,a]\times [-b,b]\right)=L^2(\bar\Omega)\\
(\phi,\psi)&\mapsto \phi\psi.
\end{align*}
The set of linear combinations of functions of the form $\phi(x)\psi(y)$ 
is dense in $L^2\left(\bar\Omega\right)$ since $L^2([-a,a])$ and $L^2([-b,b])$ are separable  (it contains a countable, dense subset). Using that the eigenfunctions $\{\phi_m$\} and $\{\psi_n\}$ are complete in $L^2([-a,a])$ and $L^2([-b,b])$, respectively, we can conclude that the product of the eigenfunctions 
$\{\phi_n(x)\psi_m(y)\}$ is complete in $L^2\left(\bar\Omega\right)$. Note that if $\rho$ and $\nu$ are the eigenvalues of the SLP \eqref{SLx} and \eqref{SLy}, respectively, then the corresponding boundary conditions in the SLP are precisely the conditions in Theorem~\ref{thm:eigenvectors}.
Consequently, $\{\phi_m\psi_n\}$ give a unique basis expansion in $L^2(\bar\Omega)$, where $\phi_m \psi_n$ satisfies the boundary condition 
\[(K(z)L(z)-L(z)K(z))\phi_m \psi_n = 0\] 
and $L(z)\phi_m \psi_n = (k^2(z)-\rho_m^2)(k^2(z)-\nu_n^2)\phi_m \psi_n$, where $k^2(z)\neq \rho_m^2$ and $k^2(z)\neq \nu_n^2$. 

\subsection{Characterisation of the spectrum and resolvent set of the DDE}\label{s:resolvent}
We are now able to fully characterize the spectrum and resolvent sets of our neural field model for $N=1$. 

\begin{thm}\label{thm:spectrum-resolvent}
Let $z\in \mathbb{C}\setminus\{-\alpha \}$, such that $k(z)\neq 0$. Moreover, let $\{\phi_m \psi_n\}_{ m,n\in \mathbb{N}}$ form a basis of $L^2(\Omega)$, such that $L(z)\phi_m \psi_n = (k^2(z)-\rho_m^2)(k^2(z)-\nu_n^2)\phi_m \psi_n$, where $k^2(z)\neq \rho_m^2$ and $k^2(z)\neq \nu_n^2$, and $(K(z)L(z)-L(z)K(z))\phi_m \psi_n = 0$. 

If there exist $m,n\in \mathbb{N}$ for which $P_z(\rho_m,\nu_n)=0$, then $\Delta(z) \phi_m\psi_n=0$ and $z\in \sigma_p(A)$ with eigenvector $\phi_m\psi_n$.

Otherwise, when $P_z(\rho_m,\nu_n)\neq 0$ for all $m,n\in \mathbb{N}$, then $z\in \rho(A)$. 
\end{thm}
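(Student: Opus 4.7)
The plan is to derive an explicit formula for $\Delta(z)$ acting on each basis function and then split on the two cases. Combining the assumed boundary condition $(K(z)L(z)-L(z)K(z))\phi_m\psi_n = 0$ with the key identity \eqref{key_property}, $L(z)K(z)q = 4c(z)k^2(z)q$, gives $L(z)K(z)(\phi_m\psi_n) = 4c(z)k^2(z)\phi_m\psi_n$, while the eigenrelation $L(z)(\phi_m\psi_n) = (k^2(z)-\rho_m^2)(k^2(z)-\nu_n^2)\phi_m\psi_n$ has nonvanishing factor by hypothesis and can be removed on the left, yielding
\begin{equation*}
K(z)(\phi_m\psi_n) = \frac{4c(z)k^2(z)}{(k^2(z)-\rho_m^2)(k^2(z)-\nu_n^2)}\phi_m\psi_n.
\end{equation*}
Substituting into $\Delta(z) = (z+\alpha)I - K(z)$ and rewriting the numerator via \eqref{char_pol_N1} produces $\Delta(z)(\phi_m\psi_n) = -P_z(\rho_m,\nu_n)\,[(k^2(z)-\rho_m^2)(k^2(z)-\nu_n^2)]^{-1}\phi_m\psi_n$. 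Case~1 is then immediate: whenever $P_z(\rho_m,\nu_n)=0$ for some $m,n$, the function $\phi_m\psi_n$ is a nontrivial element of $\ker\Delta(z)$, and by the correspondence between zeros of $\Delta$ and eigenvectors of $A$ recalled in Section~\ref{section spectrum}, $z\in\sigma_p(A)$ with eigenvector $\phi_m\psi_n$.

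For Case~2 I would prove that $\Delta(z):Y\to Y$ is bijective; combined with $\sigma_{\mathrm{ess}}(A) = \{-\alpha\}$ and $z\neq -\alpha$, this places $z$ in $\rho(A)$ via the standard DDE correspondence of \cite{vg,spek2019neural}. Since $K(z)$ maps $Y$ into $C^2(\bar\Omega)$, which embeds compactly into $Y$ on the bounded domain, $K(z)$ is compact on $Y$ and $\Delta(z) = (z+\alpha)I - K(z)$ is Fredholm of index zero; bijectivity reduces to injectivity. Assume $\Delta(z)q=0$ with $q\in Y$. Proposition~\ref{prop:smooth} yields $q\in C^\infty(\bar\Omega)\subset L^2(\Omega)$, so the basis gives the unique expansion $q = \sum_{m,n}c_{mn}\phi_m\psi_n$ in $L^2(\Omega)$. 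The bounded integral kernel of $K(z)$ on $\bar\Omega\times\bar\Omega$ lets it extend continuously to $L^2(\Omega)$, and $\Delta(z)$ with it. Applying this extension term by term and using the formula just derived,
\begin{equation*}
0 = \Delta(z)q = -\sum_{m,n}\frac{c_{mn}\,P_z(\rho_m,\nu_n)}{(k^2(z)-\rho_m^2)(k^2(z)-\nu_n^2)}\phi_m\psi_n \quad\text{in } L^2(\Omega).
\end{equation*}
Uniqueness of the basis expansion together with the hypothesis $P_z(\rho_m,\nu_n)\neq 0$ forces $c_{mn}=0$ for all $m,n$, so $q=0$, proving injectivity.

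The main obstacle is justifying the term-by-term action of $\Delta(z)$ on an $L^2$-expansion of a function initially only continuous; the clean way around it is precisely the $L^2$-extension of $K(z)$ granted by its bounded integral kernel, which lets one commute $\Delta(z)$ with the $L^2$-convergent basis expansion. The rest is bookkeeping: computing $\Delta(z)$ on the basis, invoking the Fredholm alternative on $Y$ (which upgrades injectivity to bijectivity), and appealing to the established correspondence between invertibility of $\Delta(z)$ and the resolvent set of the DDE generator to conclude $z\in\rho(A)$.
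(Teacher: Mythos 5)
Your proposal is correct and follows essentially the same route as the paper: both cases rest on the identity $\Delta(z)\phi_m\psi_n = Q_z(\rho_m,\nu_n)\phi_m\psi_n$ obtained from the eigenrelation for $L(z)$, the commutation $K(z)L(z)\phi_m\psi_n=L(z)K(z)\phi_m\psi_n$, and $L(z)K(z)=4c(z)k^2(z)I$, with the resolvent case handled by extending $\Delta(z)$ to $L^2(\Omega)$, applying it term by term to the basis expansion, and using $\sigma_{ess}(A)=\{-\alpha\}$ (you make the Fredholm-alternative step slightly more explicit than the paper, which is a welcome but inessential refinement).
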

\begin{proof}
Let $z\in \mathbb{C}\setminus\{-\alpha \}$, such that $k(z)\neq 0$ and let $\phi_m \psi_n$ as in the theorem statement. Suppose there exist $m,n\in \mathbb{N}$ for which $P_z(\rho_m,\nu_n)=0$. Then
\[L(z)\Delta(z)\phi_m \psi_n = (z+\alpha)L(z)\phi_m \psi_n - 4c(z)k^2(z) \phi_m \psi_n = P_z(\rho_m,\nu_n) \phi_m \psi_n = 0.\]
Hence by Theorem \ref{thm:equivalence}, $\Delta(z)\phi_m\psi_n=0$.

On the other hand suppose now that $P_z(\rho_m,\nu_n)\neq 0$ for all $m,n\in \mathbb{N}$. In order to prove that $z\in \rho(A)$ it is sufficient to show that $\Delta(z) q=0$ has a unique solution $q\equiv 0$, as $z \notin \sigma_{ess}(A)= \{-\alpha \}$. 

Let $q \in Y$ such that $\Delta(z) q = 0$. As $\Omega$ is a bounded domain we have that $q\in L^2(\Omega)$ and hence it has a unique basis expansion
\begin{equation}\label{sum_q}
q(x,y) = \sum_{m,n} \xi_{m,n} \phi_m(x) \psi_n(y).
\end{equation}
For the following argument we consider $\Delta(z)$ to be an operator from $L^2(\Omega)$ to $L^2(\Omega)$. In this sense it has a bounded operator norm, as the kernel $J$ is $L^2$-integrable. Therefore, we can interchange $\Delta(z)$ with the infinite sum. Using the properties of $\phi_m \psi_n$ we obtain that
\begin{align*}
\Delta(z) \phi_m \psi_n &= (z+\alpha)\phi_m \psi_n - K(z)\phi_m \psi_n\\
&= (z+\alpha)\phi_m \psi_n - \frac{K(z)L(z)\phi_m \psi_n}{(k^2(z)-\rho_m^2)(k^2(z)-\nu_n^2)} \\
&= (z+\alpha)\phi_m \psi_n - \frac{L(z)K(z)\phi_m \psi_n}{(k^2(z)-\rho_m^2)(k^2(z)-\nu_n^2)}\\
&= (z+\alpha)\phi_m \psi_n - \frac{4 c(z) k^2(z)}{(k^2(z)-\rho_m^2)(k^2(z)-\nu_n^2)}\phi_m \psi_n\\
&= Q_z(\rho_m,\nu_n)\phi_m \psi_n.
\end{align*}
Combining this with the sum \eqref{sum_q}, gives
\[\Delta(z) q = \sum_{m,n} \xi_{m,n} \Delta(z)\phi_m \psi_n = \sum_{m,n} \xi_{m,n} Q_z(\rho_m,\nu_n)\phi_m \psi_n = 0.\]
From here we can conclude that $\xi_{m,n}Q(\rho_m,\nu_n)=0$ for all $m,n\in \mathbb{N}$ and
\[Q_z(\rho_m,\nu_n) = \frac{P_z(\rho_m,\nu_n)}{(k^2(z)-\rho_m^2)(k^2(z)-\nu_n^2)}\neq 0.\]
Hence $\xi_{m,n}=0$ for all $m,n\in \mathbb{N}$ and therefore, $q(x,y)=0$  $\forall (x,y)\in\bar\Omega$. 
\end{proof} 
Note that the eigenvectors found are exactly those in Theorem \ref{thm:eigenvectors}.

For $z\in \rho(A)$ we can construct a solution for the resolvent problem, which we need in the next section. 
\begin{prop}
Let $z\in \rho(A)$ such that $k(z)\neq 0$ and let $g\in Y$.
There exists a unique $q\in Y$ that solves 
\begin{equation}\label{resolvent_eq}
  \Delta(z) q = g,  
\end{equation}
and is given by
\begin{equation}\label{eq:resolventsol}
q(x,y)=\frac{g(x,y)}{z+\alpha} + \frac{4 c(z)k^2(z)}{z+\alpha}\sum_{m,n} \frac{\xi_{n,m}}{P_z(\rho_n,\nu_m)} \phi_n(x) \psi_m(y),
\end{equation}
where $\phi_n \psi_m$ are as in Theorem \ref{thm:spectrum-resolvent}.
\end{prop}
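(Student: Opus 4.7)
The plan is to establish uniqueness first from Theorem~\ref{thm:spectrum-resolvent}, then to construct the solution by expanding $g$ in the $L^2(\bar\Omega)$ basis built in Section~\ref{s:completenes_eigSLP}, verify the formula formally, and finally bootstrap from $L^2$ to $Y$.

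For uniqueness I will note that if $q_1,q_2\in Y$ both satisfy $\Delta(z)q=g$, then $\Delta(z)(q_1-q_2)=0$; since $z\in\rho(A)$, Theorem~\ref{thm:spectrum-resolvent} gives $q_1=q_2$. For existence, I will expand $g=\sum_{n,m}\xi_{n,m}\phi_n\psi_m$ in $L^2(\bar\Omega)$ and look for $q=\sum_{n,m} c_{n,m}\phi_n\psi_m$ in the same basis. The identity $\Delta(z)[\phi_n\psi_m]=Q_z(\rho_n,\nu_m)\phi_n\psi_m$, already computed in the proof of Theorem~\ref{thm:spectrum-resolvent}, forces $c_{n,m}=\xi_{n,m}/Q_z(\rho_n,\nu_m)$. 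The partial-fraction identity
\[
\frac{1}{Q_z(\rho,\nu)}=\frac{1}{z+\alpha}+\frac{4c(z)k^2(z)}{(z+\alpha)\,P_z(\rho,\nu)},
\]
which follows directly from $P_z=(k^2(z)-\rho^2)(k^2(z)-\nu^2)Q_z$ together with the explicit $N=1$ form of $Q_z$, then recasts this series exactly as the formula \eqref{eq:resolventsol}.

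To justify the formal manipulation I will use two estimates. First, since $z\in\rho(A)$, Theorem~\ref{thm:spectrum-resolvent} gives $P_z(\rho_n,\nu_m)\neq 0$ for every $n,m$; the eigenvalues $|\rho_n|,|\nu_m|\to\infty$ by Section~\ref{s:completeness_exp}, so $Q_z(\rho_n,\nu_m)\to z+\alpha\neq 0$, and combined with nonvanishing on the finite remaining set this yields $\inf_{n,m}|Q_z(\rho_n,\nu_m)|>0$. Hence $\{\xi_{n,m}/Q_z(\rho_n,\nu_m)\}\in\ell^2$ and the series converges in $L^2(\bar\Omega)$. Second, because $J$ is continuous on $\bar\Omega\times\bar\Omega$, the operator $\Delta(z)$ extends to a bounded operator on $L^2(\bar\Omega)$, which legitimates the termwise application $\Delta(z)q=\sum_{n,m}\xi_{n,m}\phi_n\psi_m=g$ holding in $L^2(\bar\Omega)$.

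The delicate final step is the regularity upgrade from $L^2$ to $Y=C(\bar\Omega)$. I will exploit the equation itself: rewriting it as $(z+\alpha)q=g+K(z)q$, and noting that $K(z)$ sends $L^2(\bar\Omega)$ continuously into $C(\bar\Omega)$ because its kernel is continuous, one gets $K(z)q\in Y$; together with $g\in Y$ this forces $q\in Y$, so that $\Delta(z)q=g$ holds as an identity in $Y$. The main obstacle is precisely this bootstrap: the natural Hilbert space framework only produces an $L^2$ solution, and it is the smoothing property of $K(z)$ together with the uniform lower bound on $|Q_z(\rho_n,\nu_m)|$ inherited from $z\in\rho(A)$ that close the argument.
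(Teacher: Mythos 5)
Your proof is correct, but it takes a somewhat different route from the paper's. The paper substitutes $q=(p+g)/(z+\alpha)$, reduces the problem to $\Delta(z)p=K(z)g$, invokes the equivalence of Theorem~\ref{thm:equivalence} to pass to the PDE $L(z)\Delta(z)p=4c(z)k^2(z)g$ with the commutator boundary condition, and then reads off $p$ from the eigenrelation $L(z)\Delta(z)\phi_n\psi_m=P_z(\rho_n,\nu_m)\phi_n\psi_m$. You instead diagonalize the bounded integral operator $\Delta(z)$ directly on the Riesz basis via $\Delta(z)\phi_n\psi_m=Q_z(\rho_n,\nu_m)\phi_n\psi_m$, solve for the coefficients $\xi_{n,m}/Q_z(\rho_n,\nu_m)$, and recover the stated formula through the scalar partial-fraction identity $1/Q_z=1/(z+\alpha)+4c(z)k^2(z)/\bigl((z+\alpha)P_z\bigr)$ --- which is exactly the scalar shadow of the paper's operator substitution. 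Your version buys two things: it bypasses Theorem~\ref{thm:equivalence} and the unbounded operator $L(z)$ entirely, so the termwise interchange only needs boundedness of $\Delta(z)$ on $L^2$; and it makes explicit two points the paper leaves implicit, namely the uniform lower bound $\inf_{n,m}|Q_z(\rho_n,\nu_m)|>0$ guaranteeing $L^2$ convergence of the series, and the regularity bootstrap $(z+\alpha)q=g+K(z)q$ upgrading the $L^2$ solution to an element of $Y=C(\bar\Omega)$ via the smoothing of the continuous kernel. The paper's route, in exchange, exhibits the solution as an explicit solution of the boundary value problem, which is more in the spirit of the preceding sections. One small caveat: the sign conventions for $P_z$ in \eqref{char_pol} and \eqref{char_pol_N1} differ by an overall minus, so your identity should be checked against whichever convention is used in \eqref{eq:resolventsol}; this affects only a global sign, not the structure of the argument.
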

\begin{proof}
Let $z\in \rho(A)$ such that $k(z)\neq 0$ and let $g\in Y$. Furthermore let $\{\phi_m \psi_n\}_{m,n\in \mathbb{N}}$ form a basis of $L^2(\Omega)$ such that $L(z)\phi_m \psi_n = (k^2(z)-\rho_m^2)(k^2(z)-\nu_n^2)\phi_m \psi_n$, where $k^2(z)\neq \rho_m^2$ and $k^2(z)\neq \nu_n^2$, and $(K(z)L(z)-L(z)K(z))\phi_m \psi_n = 0$. 

First let us rewrite $q$ as
\[
q(x,y) = \frac{p(x,y)+g(x,y)}{z+\alpha}.
\]
Then $\Delta(z) q = g$ is equivalent to
\begin{equation}\label{eq:resolvent_p}
\Delta(z) p = K(z)g. 
\end{equation}
This implies that $p =  \tfrac{1}{z+\alpha}K(z)(p +g)$. Hence $p$ is in the range of $K(z)$, which implies that it satisfies the smoothness conditions of Theorem \ref{thm:equivalence}. By this theorem, \eqref{eq:resolvent_p} is equivalent to
\begin{equation}
\Delta(z) p = K(z)g \Leftrightarrow \left\{ L(z)\Delta(z)p= 4 c(z)k^2(z) g \text{ and } K(z)L(z)p=L(z)K(z)p\right\}.
\end{equation}

Similar to the previous theorem, we write a unique basis expansion of $g$ as
\begin{equation}
g(x,y) = \sum_{m,n} \xi_{m,n} \phi_m(x) \psi_n(y). 
\end{equation}
By Theorem \ref{thm:spectrum-resolvent} we get that for all $m,n\in \mathbb{N}$, $P_z(\rho_m,\nu_n)\neq 0$. Furthermore, by Lemma~\ref{lem:real part} we have that $|\rho_m|,|\nu_n| \rightarrow \infty$, when $m,n \rightarrow \infty$. Hence $1/|P_z(\rho_n,\nu_m)|\rightarrow 0$ when $m \rightarrow \infty$ or $n \rightarrow \infty$. Then using the properties of $\phi_m\psi_n$ we find that 
\begin{equation}
p(x,y) = 4 c(z)k^2(z) \sum_{m,n} \frac{\xi_{n,m}}{P_z(\rho_n,\nu_m)} \phi_n(x) \psi_m(y)
\end{equation}
solves $L(z)\Delta(z)p = 4 c(z)k^2(z) g$. 

Hence the resolvent becomes
\begin{equation}
(\Delta^{-1}g)(x,y)=\frac{g(x,y)}{z+\alpha} + \frac{4 c(z)k^2(z)}{z+\alpha}\sum_{m,n} \frac{\xi_{n,m}}{P_z(\rho_n,\nu_m)} \phi_n(x) \psi_m(y).
\end{equation}
\end{proof}

\section{An example for Hopf bifurcation}\label{s:Hopf_bifurcation}
Oscillations are important features of nervous tissue that can be studied with neural field models. Hence, Hopf bifurcations play an important role in the analysis. When the space is one-dimensional, Hopf bifurcations were studied in \cite{vg} and along with other types of bifurcations also in \cite{koen, veltz}. On two-dimensional domains, numerical experiments were conducted in \cite{faugeras,PM}. In this section we study an example of Hopf bifurcation in the two-dimensional case based on our analytical results.

Assume that $N=1$, hence the connectivity function has the form
\begin{equation}\label{J_inhibition}
J(r,r')=\hat c e^{-\xi\|r-r'\|_1} \quad \forall r,r'\in\bar\Omega,
\end{equation}
where $\hat c,\xi\in\mathbb{C}$ such that $J$ is real valued. The firing rate function is given by 
\begin{equation}\label{S}
S(u)=\frac{1}{1+e^{-\gamma u}}-\frac{1}{2}\quad \forall u\in\mathbb{R},
\end{equation}
where $\gamma$ is the steepness of the sigmoidal and the delay function is as in \eqref{delay function}.

Set $\alpha=1,$ $\tau_0=1$ and the parameters in \eqref{J_inhibition} and \eqref{S} as $\xi=2$ and $\gamma=4$, respectively. The bifurcation parameter in this example is $\hat c$. When $\hat c<0$, this type of connectivity models a population with inhibitory neurons. There is a Hopf bifurcation at $\hat c=-3.27$ with eigenvalues $\lambda = \pm 1.34 i$, see Figure~\ref{fig:spectrum_Hopf}, and corresponding eigenvector
\[\varphi_\lambda(t)(r)= e^{1.34 i t}\cosh((-0.17+1.15 i)x)\cosh((-0.17+1.15 i)y),\ t\in[-\tau_{max},0], r = (x,y)\in\bar\Omega.\]

%\begin{figure}
%\begin{center}
%\includegraphics[scale=.7]{Figures/eigenvalues_1.eps}
%\caption{Spectrum of the linearized system for $\hat c=-10, \alpha=1, \tau_0=1, \xi=1, \gamma=6, a=b=1$}
%\end{center}
%\end{figure}
\begin{figure}
\begin{center}
\includegraphics[scale=.7]{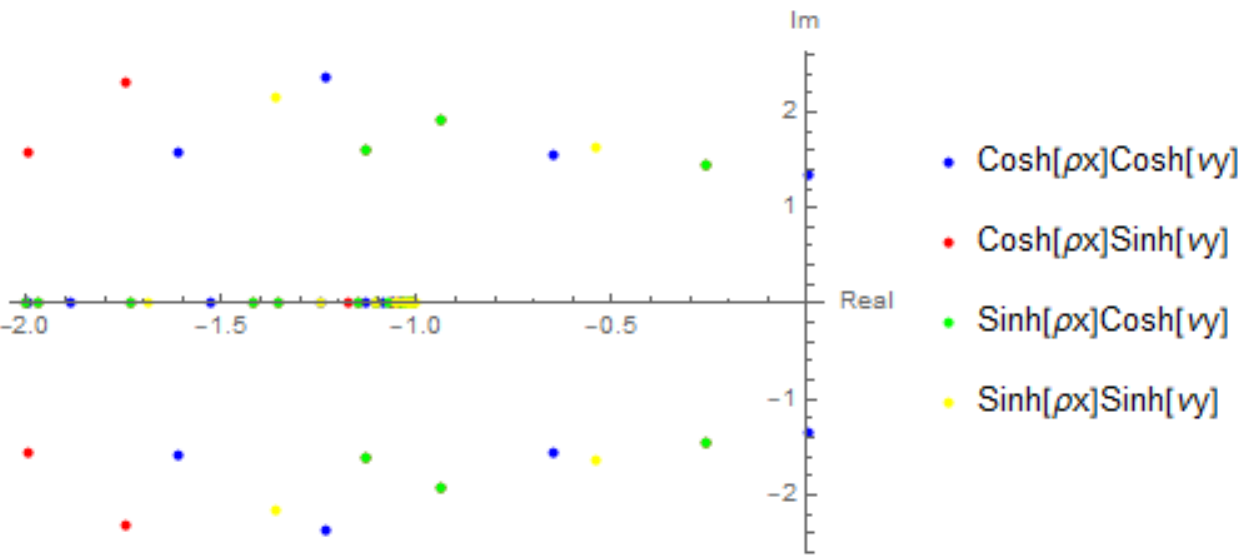}
\caption{Spectrum of the linearized system at a Hopf bifurcation for $\hat c=-3.27, \alpha=1, \tau_0=1, \xi=2, \gamma=4, a=b=1$.}\label{fig:spectrum_Hopf}
\end{center}
\end{figure}

In \cite{vg}, a procedure is derived using the sun-star calculus to compute the Lyapunov coefficient for a Hopf bifurcation. For this we need the higher order Fr\'echet derivatives of $G$ (see \cite{vg}): 
\[D^nG(\hat\varphi)(\varphi_1,\cdots, \varphi_n)(r)=\int_{\Omega}J(r,r')S^{(n)}(\hat\varphi(-\tau(r,r'),r'))\prod_{i=1}^n \varphi_i(-\tau(r,r'),r')dr',\]
for $\varphi_1,\dots,\varphi_n\in X$ and $r\in\bar\Omega$. Due to our choice of $S$, $S''(0)=0$ and therefore, $D^2G(0)$ vanishes. This reduces the computation of the Lyapunov coefficient to the following equality 
\begin{equation}\label{lyapunov_computation}
\frac{1}{2\pi i}\oint_{\partial C_{\lambda}} \Delta^{-1}(z)D^3G(0)(\varphi_\lambda,\varphi_\lambda,\bar{\varphi}_\lambda) dz = g_{21} \varphi_\lambda(0),
\end{equation}
where $\partial C_{\lambda}$ is a closed contour containing $\lambda$ and no other eigenvalues. The first Lyapunov coefficient is given by (see \cite{kuznetsov_2013})
\[l_1 = \frac{\text{Re } g_{21}}{\text{Im } \lambda}.\]

We use \eqref{lyapunov_computation} as an identity for $g_{21}$. To compute the contour integral in \eqref{lyapunov_computation} we take for $C_\lambda$ a small circle of radius $\epsilon$ around $\lambda$, $z=\lambda + \epsilon e^{2\pi i \theta}$ for $0\leq\theta<1$ and perform a change of variables to obtain
\[\int_0^1 \epsilon e^{2\pi \theta} \Delta^{-1}(\lambda + \epsilon e^{2\pi \theta})D^3G(0)(\varphi_\lambda,\varphi_\lambda,\bar{\varphi}_\lambda) d\theta = g_{21} \varphi_\lambda(0).\]
We then compute the integral numerically by using an equidistant grid on $[0,1)$ of $n_z$ points, where we use that $z$ is periodic in $\theta$. 

To compute the resolvent we approximate \eqref{eq:resolventsol} by truncating the infinite sum. For each grid point $\theta$ of above, we compute $n_x$ basis functions $\phi_m(x)$ and $n_y$ basis functions $\psi_n(y)$. So we end up with $n_x n_y$ basis functions $\phi_m(x)\psi_n(y)$. We use the Gramm-Schmidt procedure to get an orthonormal set with respect to the $L^2$ inner product. This enables us to find the coefficients $\xi_{m,n}$ by standard orthogonal projection. We find that using $n_x=n_y=3$ gives a good enough approximation for the purposes of this calculation, especially away from the boundary. 

Finally, we need to compute the scalar $g_{21}$ and find that 
\begin{equation}\label{eq:g21}
g_{21}=\frac{1}{\varphi_\lambda(0)}\int_0^1 \epsilon e^{2\pi \theta} \Delta^{-1}(\lambda + \epsilon e^{2\pi \theta})D^3G(0)(\varphi_\lambda,\varphi_\lambda,\bar{\varphi}_\lambda) d\theta.   
\end{equation}
This right hand side is, however, still a function of $x,y$ instead of a scalar, so naturally, this should be a constant function. We can use this fact to check our calculations. Using the values for the Hopf bifurcation above and $\epsilon = 0.01, n_z=32$, we see in Figure \ref{fig:my_label} that this is indeed the case. This results in a Lyapunov coefficient of $l_1=-1.572$. The negative sign of $l_1$ indicates a supercritical Hopf bifurcation.

\begin{figure}
    \centering
    \includegraphics[scale=.7]{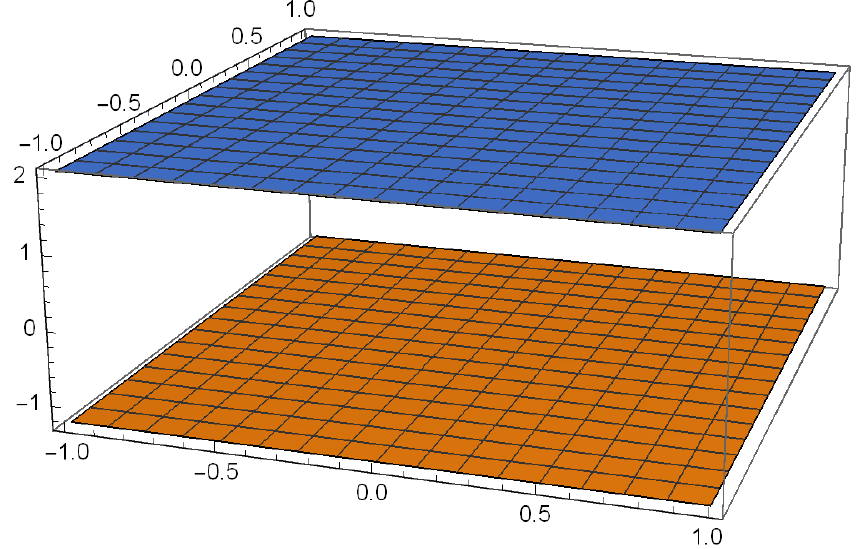}
    \caption{The real (orange) and imaginary part (blue) of $g_{21}$ at the Hopf bifurcation for $\hat c=-3.27$ at $\lambda = \pm 1.34 i$ and with $n_x=n_y=3$, $\epsilon = 0.01, n_z=32$.}
    \label{fig:my_label}
\end{figure}

Some numerical time simulations were performed in Figure~\ref{fig:time_simulation_around_Hopf} and Figure~\ref{fig:time_simulation_beyond_Hopf} to illustrate the dynamic behavior of the solution of the neural filed model for parameter values before and beyond Hopf bifurcation.

\begin{figure}%[htbp]
\begin{center}
\includegraphics[scale=.43]{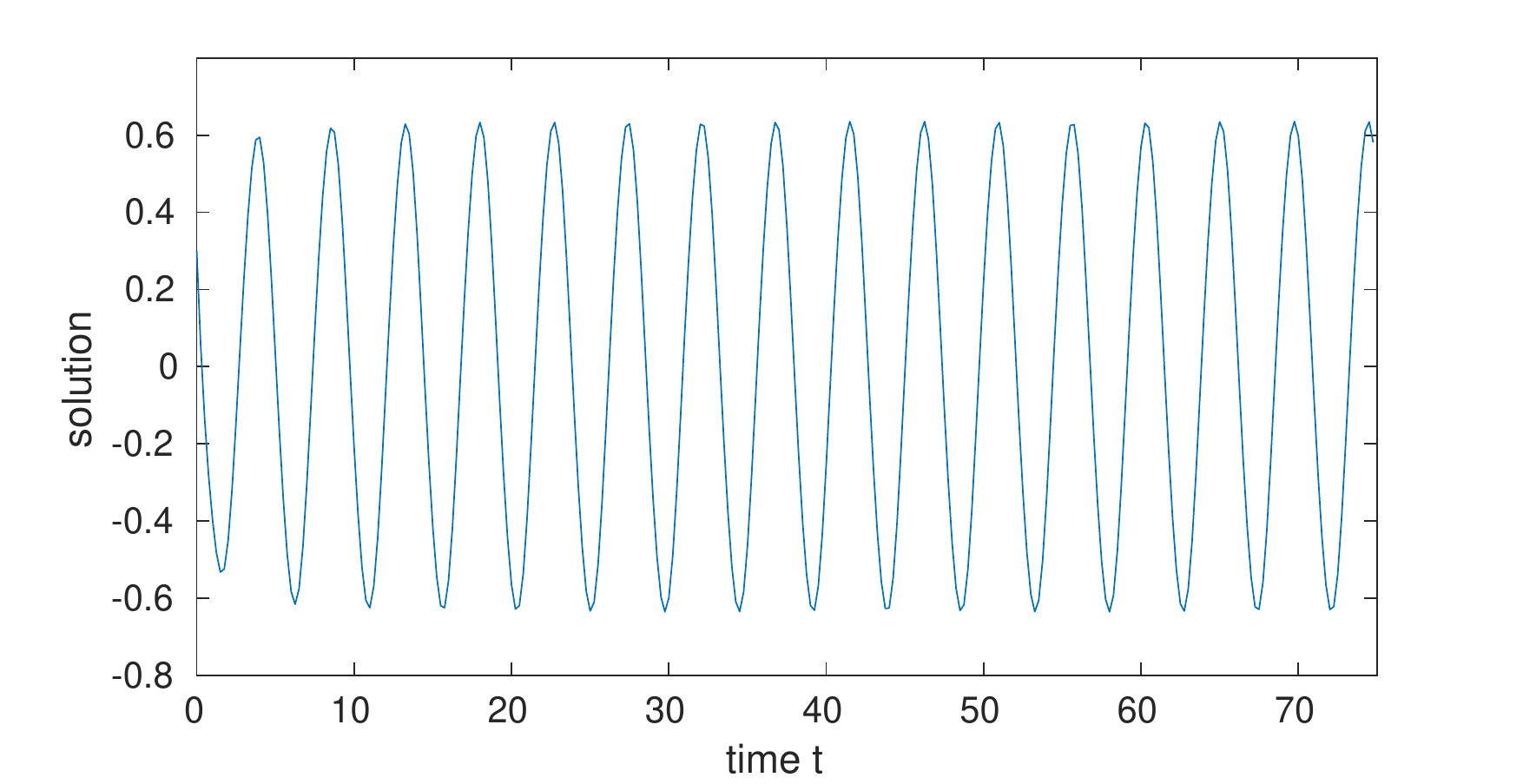}
\includegraphics[scale=.43]{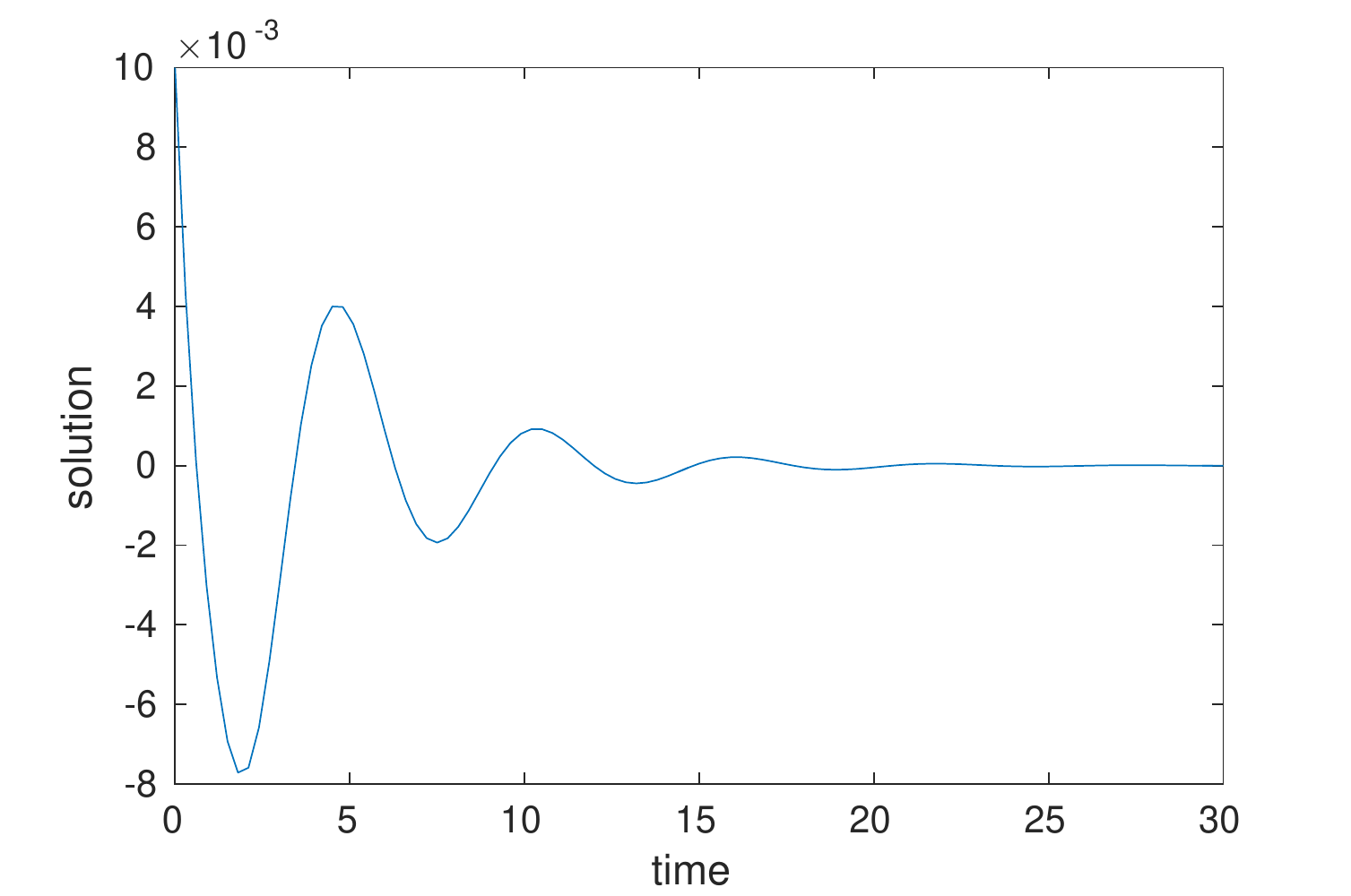}
\caption{Time evolution of the system at a given position in space when $\hat c=-4$ (left) and when $\hat c=-0.5$ (right).}
\label{fig:time_simulation_around_Hopf}
\end{center}
\end{figure}
\begin{figure}
\begin{center}
\includegraphics[scale=.55]{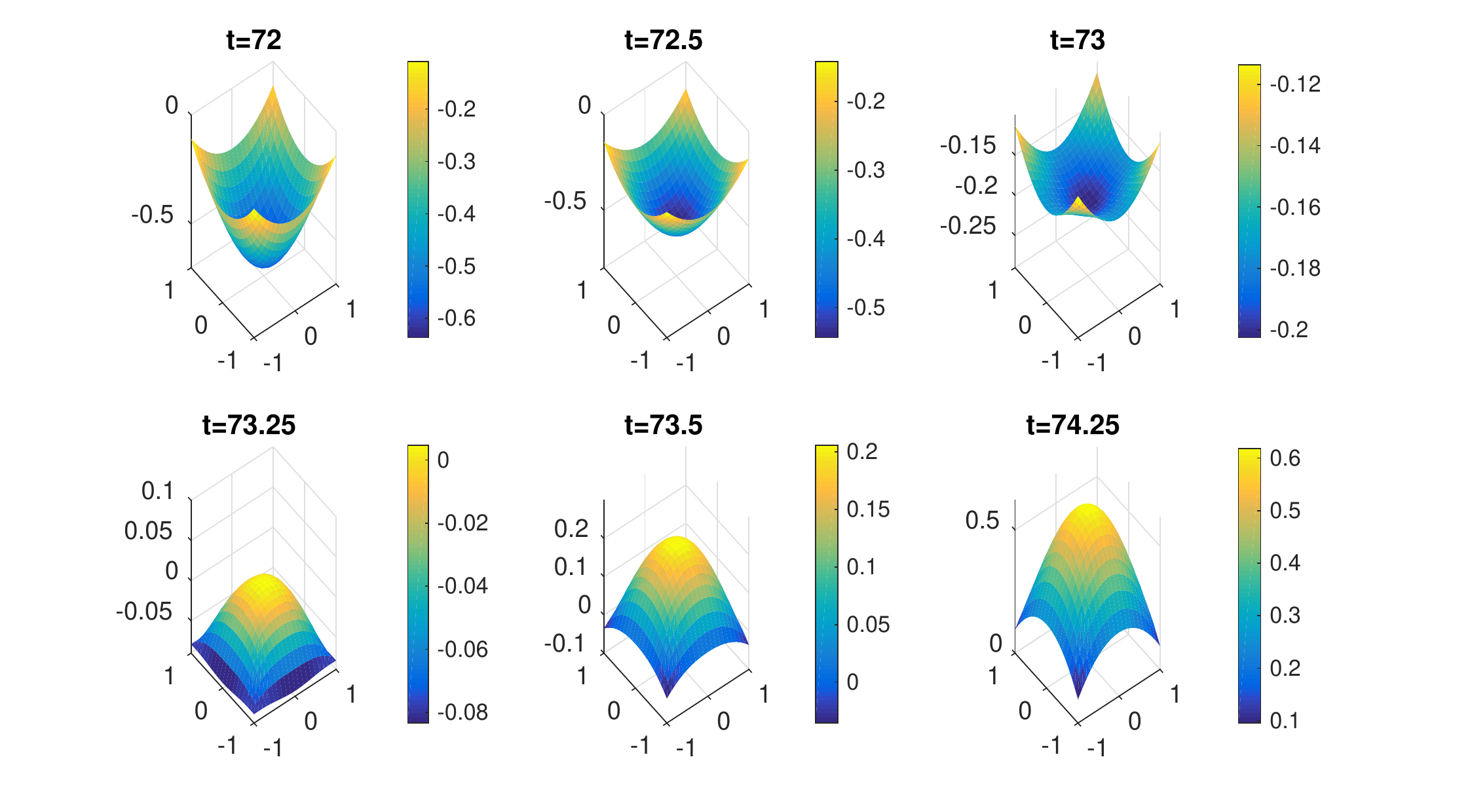}
\caption{Time evolution of the system during half of a time period when $\hat c=-4$.}\label{fig:time_simulation_beyond_Hopf}
\end{center}
\end{figure}
%%%%%%%%%%%%%%%%%%%%%%%%%%%%
\section{Conclusions}
\label{sec:conclusions}
We studied a neural field model with transmission delays and a connectivity kernel that is a linear combination of exponentials. Motivated by applications in neuroscience, we used a planar spatial domain, in particular a rectangle. This however, made the analysis more challenging. 

We investigated in detail a model with a connectivity kernel that is a single exponential.  To study the dynamics of the linearized equation, we completely characterised the spectrum. We constructed eigenvectors as solutions of the characteristic integral equation. We employed the fact that the integral equation is equivalent to a partial differential equation with a Robin-type boundary condition. This PDE can be separated into two differential equations of Sturm-Liouville type. We constructed a  basis of solutions to these differential equations that is complete in $L^2$. These basis functions allowed us to determine whether $z\in \mathbb{C}$ is part of the spectrum. 

It is quite rare that one can completely characterise the spectrum of an operator acting on multivariate  functions, such as partial differential or integral operators. It is sometimes possible to find some eigenvalues, but here we proved that we found them all. We investigated a numerical example of a population of inhibitory neurons. Using the developed theory, we detected a supercritical Hopf bifurcation. 

The classical example of an excitatory and inhibitory population of neurons can be modeled using a connectivity kernel of two exponentials. In the special case when the rectangle is a square, we found eigenvalues and eigenfunctions, but we cannot conjecture that there are no more. 

\section*{Acknowledgments}
M.~Polner was supported by the J\'anos Bolyai Research Scholarship of the Hungarian Academy of Sciences, the Hungarian Scientific Research Fund, Grant No. K129322 and SNN125119. Her research was also supported by grant TUDFO/47138-1/2019-ITM of the
 Ministry for Innovation and Technology, Hungary.

M. Polner would like to thank Prof. L\'aszl\'o Stach\'o for their valuable discussions on Section~\ref{s:SLP}.

\bibliographystyle{abbrv} %{siamplain}
\bibliography{referencesPM}

\end{document}